\documentclass[a4paper]{amsart}
\usepackage{amsfonts,amsmath,amssymb,amscd,amstext,amsbsy,latexsym,url}
\newtheorem{thm}{Theorem}[section]
\newtheorem{cor}[thm]{Corollary}
\newtheorem{lem}[thm]{Lemma}
\newtheorem{prop}[thm]{Proposition}
\theoremstyle{definition}

\theoremstyle{remark}
\newtheorem{rem}[thm]{Remark}

\numberwithin{equation}{section}

\newcommand{\Max}{\operatorname{Max}}
\newcommand{\RgMax}{\operatorname{RgMax}}
\newcommand{\Maxl}{\operatorname{{Max}_\ell}}
\newcommand{\Maxr}{\operatorname{{Max}_r}}

\newcommand{\Prml}{\operatorname{{Prm}_\ell}}
\newcommand{\Prmr}{\operatorname{{Prm}_r}}
\newcommand{\Char}{\operatorname{Char}}
\newcommand{\lann}{\operatorname{l.ann}}

\newcommand{\End}{\operatorname{End}}

\newcommand{\trdeg}{\operatorname{tr.deg}}

\newcommand{\Aut}{\operatorname{Aut}}

\begin{document}

\title{Noncommutative rings with infinitely many maximal subrings}%
\author{Alborz Azarang}%
\keywords{maximal subring, noncommutative ring, idealizer, maximal left ideal, integrality, direct product}%
\subjclass[2010]{16U80;16U60;16U70;16N99;16P20;16P40}%

\maketitle

\centerline{Department of Mathematics, Faculty of Mathematical Sciences and Computer,}
\centerline{Shahid Chamran University of Ahvaz, Ahvaz-Iran} 
\centerline{azarang@scu.ac.ir}
\centerline{ORCID ID: orcid.org/0000-0001-9598-2411}
\begin{abstract}
We investigate rings with infinitely many or only finitely many maximal subrings. We prove that if $M$ is a maximal left/right ideal of a ring $T$ that is not an ideal of $T$, and $R$ is the idealizer of $M$, then $T$ has at least $|R/M|+1$ maximal left/right ideals that are not two-ideals of $T$; in particular $T$ has at least $|R/M|+1$ distinct maximal subrings. Moreover, if $T$ is a $K$-algebra over an infinite field $K$, then either $T$ has infinitely many maximal subrings or $T$ is a quasi-duo ring with certain algebraic properties similar to those commutative rings that have not infinitely many maximal subrings. We prove that for a simple ring $R$, the ring $R\times R$ has only finitely many maximal subrings if and only if $R$ is finite. We also study rings which are integral over their centers and have only finitely many maximal subrings. We prove that if $T$ is integral over its center and $T$ has more than $2^{\aleph_0}$ maximal (left/right) ideals, then $T$ has infinitely many maximal subrings. In particular, we show that if a $J$-semisimple ring $T$ is integral over its center and has only finitely many maximal subrings, then $T$ embeds into $S\times \prod_{i\in I}E_i$, where each $E_i$ is an absolutely algebraic field and $S$ is a finite semisimple ring. Furthermore, if $T$ is a left Noetherian algebraic $K$-algebra over an infinite field $K$ and $T$ has only finitely many maximal subrings, then $T$ is a countable left Artinian ring which is integral over $\mathbb{Z}_p$, where $p=\Char(K)$. We precisely determine when $T=\prod_{i\in I}\mathbb{M}_{n_i}(E_i)$, where each $E_i$ is a field and $n_i\in\mathbb{N}$, has only finitely many maximal subrings. Finally, we see that if $R$ is an infinite Artinian ring, then $\mathbb{M}_n(R)$, for $n>1$, and $R\times R$ have infinitely many maximal subrings.
\end{abstract}
\section{Introduction}
Let $T$ be an associative ring with identity $1\neq 0$ and let $\RgMax(T)$ denote the set of all  maximal subrings (which contain $1_T$) of $T$. In this paper we study conditions under which $\RgMax(T)$ is finite or infinite. In \cite{azffs}, the author characterized fields and affine rings that have only finitely many maximal subrings; in \cite{azomn}, the authors studied commutative rings with either infinitely many or only finitely many maximal subrings. We recall some relevant facts from those works (which will be needed later) as follows:

\begin{enumerate}
\item A field $E$ has only finitely many maximal subrings if and only if it contains a subfield $F$ without maximal subrings with $[E:F]$ is finite; equivalently, every descending chain $\cdots\subset R_2\subset R_1\subset R_0=E$, where each $R_i$ is a maximal subring of $R_{i-1}$, $i\geq 1$, is finite. Moreover, such a subfield $F$ is unique, all such chains have the same length, say $m$, and $R_m=F$; see \cite[Theorems 3.2 and 3.6]{azffs}.
\item Let $R=F[\alpha_1,\ldots,\alpha_n]$ be an affine integral domain over a field $F$. Then $R$ has only finitely many maximal subrings if and only if $F$ has only finitely many maximal subrings and each $\alpha_i$ is algebraic over $F$; see \cite[Theorem 4.1]{azffs}.
\item For any ring $R$, either $R$ is a Hilbert ring or $\RgMax(R)$ is infinite, see \cite[Corollary 1.9]{azomn}.
\item If $R$ is an Artinian ring that is either uncountable, or of zero characteristic, or is not integral over its prime subring, then $\RgMax(R)$ is infinite; see \cite[Theorem2.5]{azomn}.
\item If $R$ is a Noetherian ring with $|R|>2^{\aleph_0}$, then $|\RgMax(R)|\geq 2^{\aleph_0}$; see \cite[Theorem 2.7]{azomn}.
\item Let $\{R_i\ |\ i\in I\}$ be a family of rings and $R=\prod_{i\in I}R_i$, in \cite[Theorem 3.9]{azomn}, the authors precisely determined when $R$ has only finitely many maximal subrings. Moreover, if $I$ is infinite, then $|\RgMax(R)|\geq 2^{|I|}$; see \cite[Remark 3.18]{azkrc}.
\item For a field $K$, the ring $K\times K$ has only finitely many maximal subrings if and only if $K$ is a finite field; see \cite[Corollary 3.5]{azomn}.
\item If $R$ is a semilocal ring, then either $R$ has infinitely many maximal subrings or $R$ is integral over $\mathbb{Z}_n$, where $\Char(R)=n>1$; see \cite[Corollary 2.2]{azomn}.
\item If $R$ is a ring with $|\Max(R)|>2^{\aleph_0}$ (resp. $|R/J(R)|>2^{2^{\aleph_0}}$), then $|\RgMax(R)|\geq |\Max(R)|$ (resp. $|\RgMax(R)|\geq 2^{\aleph_0}$); see \cite[Proposition2.8]{azomn}.
\item A ring $R$ satisfies that $R\times R$ has only finitely many maximal subrings if and only if $R$ is a semilocal ring with only finitely many maximal subrings and $R/M$ is finite for each $M\in \Max(R)$; see \cite[Corollary 3.10]{azomn}.
\end{enumerate}
In \cite{azomn}, the authors also generalized statement $(1)$ for a finite direct products of fields. For noncommutative rings, the existence of maximal subring was first studied in \cite{azq}; we will mention some results from \cite{azq} in the next section. Interestingly, every ring $R$ can be realized as a maximal subring of some larger ring $T$; see \cite[Theorem 3.7]{azq}. Recently, the existence and the structure of maximal subrings in division rings were investigated in\cite{azdiv}; and the existence of maximal subrings in certain noncommutative rings was studied in \cite{azmscnc}. In \cite{krmnaz} and \cite{azfd}, the authors systematically examined maximal commutative subrings of general noncommutative rings and commutative maximal subrings, respectively. We refer the interested reader to \cite{mscnfsr} for classification of maximal subrings of finite semisimple rings and their applications to calculating the covering number of a finite semisimple ring; see also \cite{azcond, azid} for further results on conductor ideals and certain ideals of the extension $R\subseteq T$, where $R$ is a maximal subring of $T$.\\

In this paper, we extend the study of maximal subring beyond the commutative setting, focusing on general associative rings with identity. Specifically, we investigate conditions under which a ring possesses only finitely many or infinitely many maximal subrings, with particular attention to noncommutative algebras over infinite fields, rings integral over their centers, certain matrix or product constructions. Our approach combines ideal-theoretic techniques, structural ring theory, and cardinality arguments, providing a unified framework that generalizes and complements earlier commutative results.\\  

A brief outline of this paper is as follows. In Section $2$, we present some preliminary results needed in the sequel, particularly concerning the number of maximal left/right ideals of a ring $T$ that are not two-sided ideals. Specifically, we prove that if $T$ is a ring, $M$ is a maximal left/right ideal of $T$ that is not an ideal of $T$, and $R$ is the idealizer of $M$ in $T$, then $T$ has at least $|R/M|+1$ maximal left/right ideals $N$ that are similar to $M$ (i.e., $T/N\cong T/M$ as left/right $T$-modules); consequently, $T$ has at least $|R/M|+1$ maximal left/right ideals that are not ideals of $T$. Moreover, if $T$ is a $K$-algebra over an infinite field $K$, then either $T$ is a quasi-duo ring or $T$ has infinitely many maximal left/right ideals that are not ideals of $T$ (indeed, either $|\Maxl(T)\setminus \Max(T)|\geq |K|$ or $|\Maxr(T)\setminus \Max(T)|\geq |K|$). Finally we recall several required results from \cite{ston} regarding the number of maximal left/right ideals of $\mathbb{M}_n(R)$ (where $R$ is a ring and $n\in\mathbb{N}$), as well as from \cite{azq} concerning maximal subrings of noncommutative rings.\\

In Section $3$, we investigate the number of maximal subrings in general noncommutative rings. First, we show that if $D$ is a division ring with only finitely many maximal subrings, then the center of $D$ is algebraically closed in $D$. For a simple ring $S$, we precisely determine all maximal subrings of $S\times S$. We prove that if $S$ is a simple ring that is not a division ring, then $S$ has only finitely many maximal subrings if and only if $S$ is finite. Consequently, for a simple ring $S$, the ring $S\times S$ has only finitely many maximal subrings if and only if $S$ is a finite ring. Next, we demonstrate that if a ring $T$ has only finitely many maximal subrings, then $\Maxl(T)\setminus \Max(T)$, $\Maxr(T)\setminus \Max(T)$, $\Max(T)\setminus \Maxl(T)$, $\Prml(T)\setminus \Max(T)$ and $\Prml(T)\setminus \Maxl(T)$ (and  their right analogous) are all finite. Furthermore, if $M$ is a maximal left/right ideal of $T$ that is not an ideal of $T$, then $\mathbb{I}(M)/M$ is a finite field; and if $N$ is a maximal ideal of $T$ that is not a maximal left ideal of $T$, then $T/N\cong \mathbb{M}_n(F)$, for some finite field $F$ and some $n>1$. We also show that, if $T$ has only finitely many maximal subrings, then $J(T)=\bigcap_{M\in \Max(T)}M$. In particular, if $T$ is a $K$-algebra over an infinite field $K$, then either $T$ has infinitely many maximal subrings or $T$ is a quasi-duo ring. Finally in Section $3$, we prove that if $T$ is an infinite ring of nonzero characteristic, then either $T$ has infinitely many maximal subrings or the set $\{M\in \Max(T)\ |\ |T/M|<\aleph_0\}$ is finite.\\

Section $4$, is devoted to rings that are integral over their centers and have only finitely many maximal subrings, with special attention to algebraic $K$-algebra over an infinite field $K$. We prove that if $T$ is integral over its center and $T$ has more than $2^{\aleph_0}$ maximal (left/right) ideals, then $T$ has infinitely many maximal subrings. Moreover, if $T$ is an algebraic $K$-algebra over an infinite field $K$, then either $T$ has infinitely many maximal subring or $T/J(T)$ is a commutative von Neumann regular ring that is integral over $\mathbb{Z}_p$, where $p=\Char(K)$. We also establish that if $T$ is integral over its center, then either $T$ has infinitely many maximal subrings or $\Prml(T)=\Max(T)=\Prmr(T)$, and for each maximal ideal $M$ of $T$, either $T/M$ is an absolutely algebraic field (with only finitely many maximal subrings) or $T/M$ is a finite simple ring. In particular, if additionally $J(T)=0$, then $T$ embeds into $S\times\prod_{i\in I}E_i$, where $S$ is a finite semisimple ring and each $E_i$ is an absolutely algebraic field.\\

Finally in Section $5$, we precisely determine when a ring of the form $T=\prod_{i\in I}\mathbb{M}_{n_i}(E_i)$ (with each $E_i$ is a field and $n_i\in\mathbb{N}$), has only finitely many maximal subrings. We also prove that if $R$ is an infinite Artinian ring, then $\mathbb{M}_n(R)$, for $n>1$, as well as $R\times R$, have infinitely many maximal subrings.\\

All rings considered are unital with $1\neq 0$. All subrings, modules and homomorphisms are assumed to be unital. If $R\subsetneq T$ is a ring extension and such that no other subring lies strictly between $R$ and $T$, then $R$ is called a maximal subring of $T$; equivalently, the extension $R\subseteq T$ is called a minimal ring extension, see \cite{dbsid,dbsc,dorsy,frd,abmin}. Let $T$ be a ring and $M$ be a left $T$-module, then $\Maxr(T)$, $\Maxl(T)$, $\Max(T)$ and $\lann_T(M)$, denote the set of all maximal right ideals of $T$, the set of all maximal left ideals of $T$, the set of all maximal ideals of $T$ and the left annihilator of $M$ in $T$, respectively. The set of all left (resp. right) primitive ideals of $T$ is denoted by $\Prml(T)$ (resp. $\Prmr(T)$). The characteristic of a ring $T$ written as $\Char(T)$. We use $\Aut(T)$ for the group of all ring automorphisms of $T$. The Jacobson radical of $T$ is denoted by $J(T)$. We let $N(T)$ and $U(T)$ be the set of all nilpotent elements and the units group of $T$, respectively. The opposite ring of $T$ is written by $T^{\text{op}}$. The ring of all $n\times n$ matrices over $T$ is denoted by $\mathbb{M}_n(T)$. The ring of all $T$-module homomorphisms of $M$ is written as $\End_T(M)$. For a right ideal $A$ of $T$, the idealizer of $A$ in $T$ is denoted by $\mathbb{I}(A)$, it is the largest subring of $T$ in which $A$ is a two-sided ideal, i.e., $\mathbb{I}(A)=\{t\in T\ |\ tA\subseteq A\}$. The same notation is used when $A$ is a left ideal of $T$. In any case, the ring $\mathbb{E}_T(A):=\mathbb{I}(A)/A$ is called the eigenring of $A$. If $X$ is a subset of $T$, then $C_T(X)$ denotes the centralizer of $X$ in $T$; in particular, $C(T)=C_T(T)$ is the center of $T$. The subring $\{n.1_T\ |\ n\in\mathbb{Z}\}$ is called the prime subring of $T$. A ring $T$ is called left (resp. right) quasi-duo if any maximal left (resp. right) ideal of $T$ is two-sided. If $T$ is both left and right quasi-duo, it is called a quasi-duo ring, see \cite{lamq}. Let $R$ be a subring of $T$ and $t\in T$. We say that $t$ is left (resp. right) integral over $R$, if $t$ is a root of a monic polynomial of degree $n\geq 1$ with coefficients on the left (resp. right) side from $R$. The subring $R$ is left (resp. right) integrally closed in $T$, if every element of $T$ that is left (resp. right) integral element over $R$, belongs to $R$. We say it is integrally closed in $T$, if $R$ is both left and right integrally closed in $T$. When $T$ is a ring and $R$ is a subring of $C(T)$, we call $T$ is an $R$-algebra. In this case, the notation of right and left integral elements (as well as, left and right integrally closed) for the extension $R\subseteq T$ coincide. For a field extension $F\subseteq E$, we write $\trdeg(E/F)$ for the transcendence degree of $E$ over $F$. Other notations and definitions follow \cite{good,lam,rvn}.

\section{Preliminaries}
Let $T$ be a ring and $I$ and $J$ be left ideals of $T$. We say $I$ and $J$ are similar left ideals if $T/I\cong T/J$ as left $T$-modules. If $a\in T$, define $(I:a):=\{x\in T\ |\ xa\in I\}$, then it is not hard to see that $I$ and $J$ are similar if and only if there exists $c\in T$ such that $I+Tc=T$ and $J=(I:c)$, see \cite[Proposition 1.3.6]{cohnfir}. For a left ideal $I$ of $T$, we have the ring isomorphism $\mathbb{I}(I)/I\cong (\End_T(T/I))^{\text{op}}$, see \cite[P. 14, 1.11]{macrob}. Consequently, if $I$ and $J$ are similar left ideals of $T$, then $\mathbb{I}(I)/I\cong \mathbb{I}(J)/J$ as rings. Thus, up to ring isomorphism, the ring $\mathbb{I}(I)/I$ is uniquely determined by similarity classes; in particular $|\mathbb{I}(I)/I|=|\mathbb{I}(J)/J|$. In other words, the cardinality of $\mathbb{I}(I)/I$ is invariant under similarity. Now, let $M$ be a maximal left ideal of $T$. The following facts are useful:
\begin{enumerate}
\item A maximal left ideal $N$ of $T$ is similar to $M$ if and only if $N=(M:c)$ for some $c\in T\setminus M$.
\item If $N$ is a (maximal) left ideal of $T$ similar to $M$, then $N$ is an ideal of $T$ if and only if $N=M$, which in turn holds if and only if $M$ is an ideal of $T$.
\end{enumerate}
To see the previous facts, note that $(1)$ follows directly from \cite[Proposition 1.3.6]{cohnfir}, because obviously $M+Tc=T$. For $(2)$, assume that $N$ and $M$ are similar left ideals and that $N$ is an ideal. Since $T/N\cong T/M$ as left $T$-modules, we have $N=\lann_T(T/N)=\lann_T(T/M)\subseteq M$. Because $N$ is a maximal left ideal, the inclusion $N\subseteq M$ forces $N=M$. Hence $(2)$ holds. Consequently, if $M$ is a maximal left ideal of $T$ that is not an ideal of $T$, and if $[M]$ denotes the set of all (maximal) left ideals of $T$ similar to $M$, then by $(2)$ we have $[M]\subseteq \Maxl(T)\setminus \Max(T)$, i.e., every (maximal) left ideal in $[M]$ fail to be a two-ideal. Clearly, similarity is an equivalence relation on the set of all left ideals of $T$, and in particular on $\Maxl(T)$. From $(1)$ and $(2)$, it is evident that for a maximal left ideal $M$, we have $|[M]|=1$ (i.e., $[M]=\{M\}$) if and only if $M$ is an ideal of $T$ (and therefore in this case $M\in \Max(T)\cap \Maxr(T)$).\\

It is clear that for any left ideal $I$ of a ring $T$ and any $c\in T$, we have $(I:c)=T$ if and only if $c\in I$. Moreover, $I\subseteq (I:c)$ holds if and only if $c\in\mathbb{I}(I)$. In particular, if $M$ is a maximal left ideal of $T$, these observations immediately imply that $(M:c)=M$ if and only if $c\in \mathbb{I}(M)\setminus M$. Consequently, $[M]=\{M\}\cup\{(M:c)\ |\ c\in T\setminus\mathbb{I}(M)\}$. Before starting our main result (which present a lower bound for $|[M]|$), we recall that, as noted earlier, for any $N\in [M]$, we have $|\mathbb{I}(M)/M|=|\mathbb{I}(N)/N|$. We now present the following key theorem.

\begin{thm}\label{pt1}
Assume that $T$ is a ring and $M\in \Maxl(T)$ is not an ideal of $T$ (i.e., $M$ is not a maximal ideal of $T$). Then $|[M]|\geq |\mathbb{I}(M)/M|+1$. In particular, $|\Maxl(T)\setminus \Max(T)|\geq |\mathbb{I}(M)/M|+1$.
\end{thm}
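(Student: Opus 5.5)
The plan is to translate the count of $[M]$ into counting one-dimensional subspaces of a vector space over a division ring. Put $S=T/M$, a simple left $T$-module, and $E=\End_T(S)$. By Schur's lemma $E$ is a division ring, and $S$ is a right $E$-vector space if we write endomorphisms on the right. By the isomorphism $\mathbb{I}(M)/M\cong E^{\text{op}}$ recalled above, $|E|=|\mathbb{I}(M)/M|$.

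First I identify $[M]$ with annihilators of nonzero elements of $S$. For $c\in T$ we have $(M:c)=\lann_T(c+M)$. By fact $(1)$ above, $[M]=\{\lann_T(s)\ |\ 0\neq s\in S\}$.

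Next I show that $\lann_T(s)=\lann_T(s')$ holds exactly when $s'\in sE$, for nonzero $s,s'$.
\begin{itemize}
\item Suppose $\lann_T(s)=\lann_T(s')$. Since $S$ is simple, $Ts=S=Ts'$, so $ts\mapsto ts'$ is a well-defined nonzero endomorphism $d\in E$ with $s'=sd$.
\item Conversely, if $s'=sd$ with $d\in E\setminus\{0\}$, then $d$ is injective, so $ts=0$ if and only if $tsd=0$.
\end{itemize}
Hence $s\mapsto\lann_T(s)$ induces a bijection between $[M]$ and the set of one-dimensional $E$-subspaces of $S$.

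Now I use the hypothesis on $M$. If $\dim_E S=1$, then $S=sE$ for every nonzero $s$, so all annihilators coincide. Each then equals $\lann_T(S)$, which is a two-sided ideal, so $M$ would be an ideal. Since $M$ is not an ideal, $\dim_E S\geq 2$, and I can pick $E$-linearly independent $e_1,e_2\in S$. The lines $(e_1+e_2d)E$ for $d\in E$, together with $e_2E$, are pairwise distinct by linear independence. This gives at least $|E|+1$ distinct lines, whether $E$ is finite or infinite.

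Therefore $|[M]|\geq|E|+1=|\mathbb{I}(M)/M|+1$. The final claim follows because $[M]\subseteq\Maxl(T)\setminus\Max(T)$ by fact $(2)$. The only delicate step is the characterization of equal annihilators via $E$, and that is a direct application of simplicity.
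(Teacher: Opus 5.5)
Your proof is correct, but it is organized differently from the paper's. The paper never introduces $\End_T(T/M)$. It fixes $x\in T\setminus\mathbb{I}(M)$ and checks directly that $c+M\mapsto (M:x+c)$ is an injection of $\mathbb{I}(M)/M$ into $[M]\setminus\{M\}$. Its only tools are that $(M:c)=M$ for $c\in\mathbb{I}(M)\setminus M$, together with maximality: if $(M:x+a)=(M:x+b)$ with $a-b\notin M$, then $(M:x+a)\subseteq (M:a-b)=M$, forcing $x+a\in\mathbb{I}(M)$.

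You instead prove that $[M]$ is in bijection with the set of one-dimensional subspaces of $S=T/M$ over $E=\End_T(S)$, and you translate ``$M$ is not an ideal'' into $\dim_E S\ge 2$. Underneath, the two counts use the same family. Inside $S$ one has $(1+M)E=\mathbb{I}(M)/M$ and $\lann_T(1+M)=M$. So choosing $e_2=1+M$ and $e_1=x+M$ in your argument produces exactly the paper's ideals $(M:x+c)$, plus $M$ itself from the line $e_2E$.

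The paper's version is shorter and self-contained: it needs no Schur's lemma and no use of the isomorphism $\mathbb{I}(M)/M\cong\End_T(T/M)^{\text{op}}$. Yours gives strictly more information, namely an exact description of $[M]$ as the projective space of $S_E$. For instance, $|[M]|=(q^n-1)/(q-1)$ when $|E|=q$ and $\dim_E S=n$ are finite, and for finite $E$ the bound $|\mathbb{I}(M)/M|+1$ is attained precisely when $\dim_E S=2$.

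One harmless convention slip: if endomorphisms are written on the right so that $S$ is a right $E$-module, then the map $c\mapsto(t+M\mapsto tc+M)$ gives $\mathbb{I}(M)/M\cong E$ rather than $E^{\text{op}}$. Only the cardinality matters here, so the argument is unaffected.
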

\begin{proof}
Let $R=\mathbb{I}(M)$ and $D=R/M$. Since $M$ is not an ideal of $T$ we infer that $R$ is a proper subring of $T$. Take a fixed element $x\in T\setminus R$. We claim that the function $c+M\longmapsto (M:x+c)$ is a one-one function from $D$ to $[M]\setminus \{M\}$. Since $x\in T\setminus R$ and $c\in R$ we deduce that $x+c\in T\setminus R$ and therefore $(M:x+c)\in [M]\setminus \{M\}$, by the previous observation. Hence it remain to show that the function is one-one. Hence assume that $a,b\in R$ and $(M:x+a)=(M:x+b)$, we must show that $a+M=b+M$, i.e., $a-b\in M$. Let $t\in (M:x+a)=(M:x+b)$, then we conclude that $tx+ta, tx+tb\in M$. It follows that $t(a-b)\in M$, i.e., $(M:x+a)\subseteq (M:a-b)$. Now if $a-b\notin M$, then we obtain that $a-b\in R\setminus M$, a fortiori $(M:a-b)=M$. This implies that $(M:x+a)\subseteq M$ and consequently by maximality of $(M:x+a)$ we deduce that $(M:x+a)=M$ which is absurd, because $x+a\in T\setminus R$. Thus $a-b\in M$, as desired.
\end{proof}

An immediate consequence of the previous theorem is that if $T$ is a ring and $\Maxl(T)\setminus \Max(T)\neq\emptyset$, then $|\Maxl(T)\setminus \Max(T)|\geq 3$. In other words, if a ring $T$ possesses a maximal left ideal that is not a two-ideal, then $T$ has at least three distinct maximal left ideals that are not two-ideals.

\begin{cor}\label{pt2}
Let $T$ be a $K$-algebra over a field $K$. Then either $T$ is a left quasi-duo ring or $T$ has at least $|K|+1$ maximal left ideals which are not ideals of $T$ (i.e., $|\Maxl(T)\setminus \Max(T)|\geq |K|+1$).
\end{cor}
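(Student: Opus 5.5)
Assume $T$ is not left quasi-duo, so there is some $M\in\Maxl(T)$ that is not an ideal of $T$. Theorem~\ref{pt1} then gives $|\Maxl(T)\setminus\Max(T)|\geq |\mathbb{I}(M)/M|+1$. So the whole job is to show $|\mathbb{I}(M)/M|\geq |K|$; the corollary follows immediately.

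To get this bound, I use that $T$ is a $K$-algebra, so $K$ embeds in the center $C(T)$. I will check three things in turn.
\begin{enumerate}
\item Central elements lie in the idealizer. For $k\in K$ we have $kM=Mk\subseteq M$, since $M$ is a left ideal and $k$ is central. Hence $K\subseteq \mathbb{I}(M)$.
\item The map $K\to \mathbb{I}(M)/M$, $k\mapsto k+M$, is a ring homomorphism.
\item Its kernel $K\cap M$ is an ideal of the field $K$, so it is either $0$ or $K$. It cannot be $K$: then $1\in M$, contradicting that $M$ is proper. So the kernel is $0$ and the map is injective.
\end{enumerate}
Injectivity gives $|\mathbb{I}(M)/M|\geq |K|$. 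Combining this with Theorem~\ref{pt1} yields $|\Maxl(T)\setminus \Max(T)|\geq |\mathbb{I}(M)/M|+1\geq |K|+1$.

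This argument implicitly identifies $K$ with its image $K\cdot 1_T$. That image is still a copy of $K$, since the structure map from a field to the nonzero ring $T$ is injective. I see no real obstacle here; the only point needing care is that $K$ sits inside $\mathbb{I}(M)$, which is exactly where centrality is used.
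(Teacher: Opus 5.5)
Your proposal is correct and follows the paper's proof essentially verbatim. The paper also takes a maximal left ideal $M$ that is not two-sided, observes that the central copy of $K$ lies in $\mathbb{I}(M)$ with $K\cap M=0$, so that $K$ embeds in $\mathbb{I}(M)/M$, and then applies Theorem~\ref{pt1}; your verifications (centrality giving $K\subseteq\mathbb{I}(M)$, and $1\notin M$ forcing $K\cap M=0$) spell out what the paper leaves as ``clearly''.
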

\begin{proof}
Assume that $T$ is not a left quasi-duo ring and $M$ is a maximal left ideal of $T$ which is not an ideal of $T$. Since $K$ is a field which is contained in the center of $T$, we conclude that $K\subseteq R:=\mathbb{I}(M)$ and clearly $K\cap M=0$. Thus $K$ embeds in $R/M$ and we are done by Theorem \ref{pt1}.
\end{proof}

Hence we have the following immediate fact.

\begin{cor}\label{pt3}
Let $T$ be a $K$-algebra over an infinite field $K$. Then either $T$ is a left quasi-duo ring or $\Maxl(T)\setminus \Max(T)$ is infinite (in fact, $|\Maxl(T)\setminus \Max(T)|\geq |K|$).
\end{cor}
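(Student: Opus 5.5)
This is a direct consequence of Corollary \ref{pt2}, so the plan is simply to apply it and then do a cardinal-arithmetic simplification. If $T$ is left quasi-duo, the first alternative holds and there is nothing to prove. Otherwise, Corollary \ref{pt2} gives
\[
|\Maxl(T)\setminus \Max(T)|\geq |K|+1 .
\]

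Since $K$ is infinite, $|K|+1=|K|$ as cardinals. Hence $|\Maxl(T)\setminus \Max(T)|\geq |K|$, and in particular $\Maxl(T)\setminus \Max(T)$ is infinite.

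For completeness, the underlying mechanism runs through Theorem \ref{pt1}. Fix $M\in\Maxl(T)\setminus\Max(T)$. Because $K\subseteq C(T)$, every $k\in K$ satisfies $kM=Mk\subseteq M$, so $K\subseteq \mathbb{I}(M)$. Moreover $K\cap M=0$: a nonzero $k\in K\cap M$ is a unit lying in a proper left ideal, which is impossible. Thus $K$ embeds in $\mathbb{I}(M)/M$, and Theorem \ref{pt1} yields $|[M]|\geq |K|+1$. Since $[M]\subseteq \Maxl(T)\setminus\Max(T)$, this is exactly the bound used above.

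There is no genuine obstacle here. The only point needing care is to note that the "$+1$" is absorbed for infinite $K$, so the stated bound $\geq |K|$ is what survives.
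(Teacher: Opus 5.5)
Your proposal is correct and matches the paper's approach: the paper states this as an immediate consequence of Corollary \ref{pt2}, whose proof is exactly your embedding $K\hookrightarrow \mathbb{I}(M)/M$ followed by Theorem \ref{pt1}. The only remaining step is the one you give, namely that $|K|+1=|K|$ for infinite $K$.
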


Thus, if $T$ is a ring and $\Maxl(T)\setminus \Max(T)$ is finite, then for each $M\in \Maxl(T)\setminus \Max(T)$, we deduce that $\mathbb{I}(M)/M$ is a finite field. Moreover, if additionally, $T$ is a $K$-algebra over a field $K$ and $\Maxl(T)\setminus \Max(T)$ is finite, then $K$ must be a finite field (note that for any maximal left ideal $M$ of $T$, the field $\mathbb{I}(M)/M$ contains a copy of $K$).\\

Let $\Maxl(T)\setminus \Max(T)=\bigcup_{i\in I}[M_i]$ be the partition of $\Maxl(T)\setminus \Max(T)$ into similarity classes. By Theorem \ref{pt1}, we obtain that  $|\Maxl(T)\setminus \Max(T)|\geq |I|+\sum_{i\in I}|\mathbb{I}(M)/M|$.\\

It is clear from the above results that if $K$ is a field and $n>1$, then $|\Maxl(\mathbb{M}_n(K))|\geq |K|+1$ and $|\Maxr(\mathbb{M}_n(K))|\geq |K|+1$ (note, $\mathbb{M}_n(K)$ is a simple $K$-algebra and therefore each maximal left/right ideal of $\mathbb{M}_n(K)$ is not an ideal of $\mathbb{M}_n(K)$). In particular, if $K$ is an infinite field, then $\mathbb{M}_n(K)$ has at least $|K|$-many maximal left/right ideals (also see Remark \ref{pt5a}). Next we employ the notation and results of \cite{ston} to show that if $R$ is an infinite division ring and $n>1$, then $T=\mathbb{M}_n(R)$ possesses infinitely many maximal left/right ideals. Recall that if $M$ is a maximal left ideal of a ring $R$ and $T=\mathbb{M}_n(R)$, then for each $u\in R^n\setminus M^n$, the set $D(M,u):=\{X\in T\ |\ Xu\in M^n\}$ is a maximal left ideal of $T$; indeed, $T/D(M,u)\cong (R/M)^n$ as left $T$-modules. Moreover, $\Maxl(T)=\{D(M,u)\ |\ M\in \Maxl(R),\ u\in R^n\setminus M^n\}$, see \cite[Theorem 1.2]{ston}. In particular, when $R$ is a division ring, we obtain that $\Maxl(\mathbb{M}_n(R))=\{D(0,u)\ |\ 0\neq u\in R^n\}$.\\

Hence to show that $\Maxl(\mathbb{M}_n(R))$ is infinite for an infinite division ring $R$, it suffices to produce infinitely many nonzero vectors $u\in R^n$ for which $D(0,u)$ are distinct. By \cite[Proposition 2.3]{ston}, if $M$ is a maximal left ideal of a ring $R$, $S=\mathbb{I}(M)$ its idealizer of $M$ in $R$, and $u,v\in S^n$, then $D(M,u)=D(M,v)$ (in $T=\mathbb{M}_n(R)$) if and only if there exists $c\in S\setminus M$ such that $v\equiv uc\ \mod(M)$, i.e., $v-uc\in M^n$. In particular, for a division ring $R$ the only possible maximal left ideals is $M=0$, so that $S=R$; consequently, for $u,v\in R^n$, we have $D(0,u)=D(0,v)$ if and only if there exists $0\neq c\in R$ such that $v=uc$. In other words $v$ and $u$ are right-parallel vectors in the right $R$-module $R^n$. Clearly, if $R$ is an infinite division ring and $n>1$, then as right $R$-module $R^n$ contains infinitely many right non-parallel vectors. This observations yield the following quick corollary.

\begin{cor}\label{pt4}
Let $R$ be an infinite division ring and $n>1$. Then $T=\mathbb{M}_n(R)$ has infinitely many maximal left (resp. right) ideals.
\end{cor}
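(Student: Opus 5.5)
We use the description of maximal left ideals of matrix rings recalled above from \cite{ston}: $\Maxl(T)=\{D(0,u)\ |\ 0\neq u\in R^n\}$. We also use the criterion that $D(0,u)=D(0,v)$ if and only if $v=uc$ for some $0\neq c\in R$. So it suffices to exhibit infinitely many nonzero vectors in $R^n$ that are pairwise not right-parallel.

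Let $e_1,\dots,e_n$ be the standard basis of the right $R$-module $R^n$. Since $n>1$, for each $a\in R$ we set
\[
u_a=e_1+e_2a,
\]
the vector with first coordinate $1$, second coordinate $a$ and all other coordinates $0$. Each $u_a$ is nonzero.

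Suppose $D(0,u_a)=D(0,u_b)$. Then $u_b=u_ac$ for some $0\neq c\in R$. Comparing first coordinates gives $1=1\cdot c$, so $c=1$. Comparing second coordinates then gives $b=ac=a$. Hence the map $a\mapsto D(0,u_a)$ is injective from $R$ into $\Maxl(T)$, and $|\Maxl(T)|\geq |R|$, which is infinite.

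For maximal right ideals we apply the same argument to $T^{\text{op}}\cong \mathbb{M}_n(R^{\text{op}})$, using the transpose map. Here $R^{\text{op}}$ is again an infinite division ring, and maximal right ideals of $T$ correspond to maximal left ideals of $T^{\text{op}}$.

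The only point needing care is the normalization of the first coordinate to $1$, which forces the scalar $c$ to equal $1$; the rest is immediate. If we prefer not to rely on \cite{ston}, the same vectors still work. Indeed, $D(0,u)$ is the left annihilator of $u$, and $D(0,u)$ determines $u$ up to right scalars, because $D(0,u)$ is the kernel of $T\to R^n$, $X\mapsto Xu$. So distinct lines $u_aR$ give distinct maximal left ideals.
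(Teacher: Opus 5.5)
Your proof is correct and follows the paper's argument. Both use the description $\Maxl(\mathbb{M}_n(R))=\{D(0,u)\mid 0\neq u\in R^n\}$ from \cite{ston} and the criterion that $D(0,u)=D(0,v)$ if and only if $v=uc$ for some $c\neq 0$; your family $u_a=e_1+e_2a$ simply makes explicit the paper's remark that $R^n$ has infinitely many pairwise non-right-parallel vectors, and passing to right ideals through $T^{\text{op}}\cong\mathbb{M}_n(R^{\text{op}})$ is a sound way to get the right-hand version when $R$ is noncommutative.
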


\begin{rem}\label{pt5a}
Let $R$ be a ring, $n>1$, and $T=\mathbb{M}_n(R)$. It is obvious that $\Maxl(T)\cap \Max(T)=\emptyset=\Maxr(T)\cap \Max(T)$. Moreover, the map $M\longmapsto M^t$ (where $M^t:=\{m^t\ |\ m\in M\}$ is the transpose of $M$), is a one-to-one correspondence from $\Maxl(T)$ onto $\Maxr(T)$. In particular, $|\Maxl(T)|=|\Maxr(T)|$.
\end{rem}

\begin{rem}\label{pt5}
For any ring $T$, we have the following immediate facts:
\begin{enumerate}
\item $\Maxl(T)\cap \Max(T)=\Maxr(T)\cap \Max(T)=\Prml(T)\cap \Maxl(T)=\Prmr(T)\cap \Maxr(T)=\Prml(T)\cap \Maxr(T)=\Prmr(T)\cap \Maxl(T)$.
\item $\Max(T)\setminus \Maxl(T)=\Max(T)\setminus \Maxr(T)$.
\end{enumerate}
\end{rem}

Next, let we recall some basic facts from maximal subrings, as well as several needed results from \cite{azq}. A proper subring $S$ of a ring $T$ is maximal if and only if for every $x\in T\setminus S$, one has $S[x]=T$. Moreover, if $R$ is a proper subring of $T$ and there exists $\alpha\in T$ such that $R[\alpha]=T$, then by a standard application of Zorn's Lemma, $T$ possesses a maximal subring $S$ with $R\subseteq S$ and $\alpha\notin S$. In particular, if $R\subsetneq T$ is a ring extension which is finite as ring (or as module) over $R$, then $T$ has a maximal subring containing $R$. Now let $R$ be a ring and $\Delta:=\{(r,r)\ |\ r\in R\}$, be the diagonal subring of $T:=R\times R$. It is easy to see that $\Delta[(1,0)]=T$. Consequently, $T$ has a maximal subring containing $\Delta$. In fact, every subring $S$ of $T$ that contains $\Delta$ has the form $\Delta(I):=\Delta+I\times I$, where $I$ is an ideal of $R$. Moreover, $\Delta(I)$ is a maximal subring of $T$ if and only if $I$ is a maximal ideal of $R$, see \cite[Theorem 2.7]{azq}. Therefore a ring $R$ is simple exactly when $\Delta$ is a maximal subring of $R\times R$. The following results from \cite{azq} will be useful.

\begin{thm}\label{pt6}
\begin{enumerate}
\item \cite[Theorem 4.1]{azq}. Let $T$ be a ring and $A$ be a maximal right/left ideal of $T$ that is not an ideal of $T$. Then the idealizer of $A$ is a maximal subring of $T$. In particular, every ring either has a maximal subring or is a quasi-duo ring.
\item \cite[Proposition 4.2]{azq}. Let $R$ be a maximal subring of a ring $T$ that contains a maximal left/right ideal $A$ of $T$ that is not two-sided ideal of $T$. Then $R$ is precisely the idealizer of $A$ in $T$.
\item \cite[Theorem 4.4]{azq}. Let $T$ be a ring that is not a division ring. If $T$ is left/right primitive ring (in particular, if $T$ is simple), then $T$ has a maximal subring.
\item \cite[Theorem 3.1]{azq}. For every ring $R$ and every $n>1$, the matrix ring $\mathbb{M}_n(R)$ has a maximal subring.
\end{enumerate}
\end{thm}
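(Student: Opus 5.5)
Theorem \ref{pt6} collects four results quoted from \cite{azq}, so I will sketch a proof of each, making (1) the core and deriving the others from it. The approach for (1) is to show directly that the subring generated by $A$ and any single element outside $R:=\mathbb{I}(A)$ is already all of $T$. I treat the left case; the right case is symmetric.

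For (1), let $S\supseteq R$ be a subring and $s\in S\setminus R$; I will show $S=T$. Since $s\notin R$, there is $a_0\in A$ with $sa_0\notin A$. The observation I plan to exploit is that $A$ is a left ideal, so $TA=A$. Hence $AsA$ (the additive span of products $asa'$) satisfies $T(AsA)\subseteq AsA$, i.e.\ it is a left ideal of $T$, and it lies in $S$. I would then split into two cases.
\begin{itemize}
\item If $AsA\not\subseteq A$, maximality of $A$ gives $T=A+AsA\subseteq S$.
\item If $AsA\subseteq A$, then $sa_0\in\mathbb{I}(A)\setminus A=R\setminus A$. Since $R/A\cong\End_T(T/A)^{\text{op}}$ is a division ring by Schur's lemma, there is $r\in R$ with $sa_0r-1\in A$. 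Because $A$ is not two-sided, some $y\in T$ has $Ay\not\subseteq A$; I would then try to use $sa_0r\equiv 1$ together with the left ideal $TsA$ to force $1$, and hence $T$, into $S$.
\end{itemize}
The second case is the main obstacle. What I would try first is to replace $s$ by a suitable product $s\,a_0r\,t$ with $t\in T$, so as to land back in the first case. An alternative is to use Theorem \ref{pt1}'s observation that $(A:s)$ is a maximal left ideal different from $A$, so that $A+(A:s)=T$, and then write $1=a+b$ with $bs\in A$.

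For (2), let $R$ be a maximal subring of $T$ containing $A$. If $R\not\subseteq\mathbb{I}(A)$, pick $r\in R\setminus\mathbb{I}(A)$. The argument of (1) used only $A$ and one element outside $\mathbb{I}(A)$, so it shows that the subring generated by $A$ and $r$ is $T$. This contradicts $R\neq T$. Hence $R\subseteq\mathbb{I}(A)$, and since $\mathbb{I}(A)$ is a proper subring by (1), maximality of $R$ gives $R=\mathbb{I}(A)$.

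For (3), let $T=T/A$-faithful, i.e.\ $T$ is left primitive with faithful simple module $T/A$ for some $A\in\Maxl(T)$. If $A$ were two-sided, then $A=\lann_T(T/A)=0$. Then $0$ would be a maximal left ideal, making $T$ a division ring, which is excluded. So $A$ is not an ideal, and (1) supplies the maximal subring $\mathbb{I}(A)$. A simple ring is primitive, so the "in particular" clause follows.

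For (4), $\mathbb{M}_n(R)$ has maximal left ideals by Zorn's lemma. By Remark \ref{pt5a} none of them is two-sided, since $n>1$. So (1) again applies and gives a maximal subring.
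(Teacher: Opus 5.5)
Your reductions of (2), (3) and (4) to (1) are sound; the paper gives no proof here and only quotes \cite{azq}. However, the core item (1) has a genuine gap, caused by a mix-up of sides. For a maximal \emph{left} ideal $A$ we have $tA\subseteq A$ for every $t\in T$, so the idealizer is $\mathbb{I}(A)=\{t\in T\mid At\subseteq A\}$. Thus $s\notin R$ gives $a_0\in A$ with $a_0s\notin A$, not $sa_0\notin A$ as you claim. For the same reason $AsA=A(sA)\subseteq A$ holds for \emph{every} $s$; in the right-ideal case, $AsA\subseteq AT=A$. So your Case~1 never occurs, and Case~2, which you leave open, is the whole of (1). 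Neither of your suggestions closes it as written. The element $r$ with $sa_0r-1\in A$ lies in $\mathbb{I}(A)$ but need not lie in the subring generated by $A$ and $s$. The decomposition $1=a+b$ with $bs\in A$ does not by itself put any new element of $T$ into $S$.

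The missing idea is to drop the superfluous factor of $A$ and use $As$ instead of $AsA$. It is a left ideal, since $T(As)=(TA)s=As$. It lies in $S$, since $A\subseteq R\subseteq S$ and $s\in S$. And $As\not\subseteq A$ precisely because $s\notin\mathbb{I}(A)$. Maximality of $A$ then gives $T=A+As\subseteq S$. Together with $R\neq T$ (which holds because $A$ is not two-sided), this shows $R$ is a maximal subring; for a right ideal, use $sA$.

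This corrected argument uses only $A$ and the single element $s$, which is exactly the property your proof of (2) invokes. As you wrote it, that claim was not justified, because your Case~2 appealed to further elements of $\mathbb{I}(A)$. Once (1) is repaired, your arguments for (3) and (4) go through. In (3), a two-sided $A$ with $T/A$ faithful forces $A=0$, so $T$ would be a division ring. In (4), Remark~\ref{pt5a} shows that $\mathbb{M}_n(R)$ with $n>1$ has no two-sided maximal left ideals.
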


Finally, we prove several auxiliary facts. Recall that if $T$ is a ring with $|T|<\mathfrak{a}$, where $\mathfrak{a}$ is an infinite cardinal number, then for each finitely generated left/right $T$-module $M$, we have $|M|<\mathfrak{a}$. Indeed, if $M$ is finitely generated, there exists a natural number $n$ such that $M$ is a homomorphic image of $T^n$ (as left/right $T$-module); hence $|M|\leq |T|^n<\mathfrak{a}^n=\mathfrak{a}$.

\begin{lem}\label{pt7}
Let $T$ be a left Noetherian ring and $I$ a nilpotent ideal of $T$. If $\mathfrak{a}$ is an infinite cardinal number, then $|T|<\mathfrak{a}$ if and only if $|T/I|<\mathfrak{a}$.
\end{lem}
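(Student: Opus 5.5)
One direction is immediate: $T/I$ is a homomorphic image of $T$, so $|T/I|\leq |T|$, and $|T|<\mathfrak{a}$ gives $|T/I|<\mathfrak{a}$. For the converse, assume $|T/I|<\mathfrak{a}$ and let $n$ be such that $I^n=0$. We filter $T$ by the chain of ideals
\[
T\supseteq I\supseteq I^2\supseteq\cdots\supseteq I^{n-1}\supseteq I^n=0 .
\]
Since these are additive subgroups, $|T|$ is bounded by the product $|T/I|\cdot|I/I^2|\cdots|I^{n-1}/I^n|$. Because $\mathfrak{a}$ is infinite, a finite product of cardinals each $<\mathfrak{a}$ is again $<\mathfrak{a}$. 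So it suffices to show $|I^k/I^{k+1}|<\mathfrak{a}$ for each $1\leq k\leq n-1$.

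Each $I^k$ is a left ideal of the left Noetherian ring $T$, so it is finitely generated as a left $T$-module. Hence $I^k/I^{k+1}$ is a finitely generated left $T$-module. It is annihilated on the left by $I$, since $I\cdot I^k\subseteq I^{k+1}$. Therefore $I^k/I^{k+1}$ is a finitely generated left $T/I$-module, and hence a homomorphic image of $(T/I)^m$ for some $m$. By the observation recalled just before the lemma, applied to the ring $T/I$ with $|T/I|<\mathfrak{a}$, we get $|I^k/I^{k+1}|\leq |T/I|^m<\mathfrak{a}$. This completes the converse.

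The only substantive step is passing from $T$-modules to $T/I$-modules. Without it, the preceding remark would need the unknown bound on $|T|$ itself. The fix is the annihilation $I\cdot(I^k/I^{k+1})=0$, which makes each layer of the filtration a finitely generated module over the small ring $T/I$; the left Noetherian hypothesis supplies the finite generation.
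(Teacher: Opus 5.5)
Your proof is correct and follows the same route as the paper's: filter $T$ by the powers of $I$, use the left Noetherian hypothesis to get each $I^k/I^{k+1}$ finitely generated, bound each layer, and combine finitely many layers. Your explicit observation that $I\cdot(I^k/I^{k+1})=0$, which makes each layer a finitely generated $T/I$-module, supplies a step the paper leaves implicit; without it, applying the preceding remark would require the bound $|T|<\mathfrak{a}$ that is being proved.
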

\begin{proof}
The implication $|T|<\mathfrak{a} \ \Longrightarrow |T/I|<\mathfrak{a}$ is obvious for every ideal $I$ of $T$. Conversely, assume $I$ is a nilpotent ideal of $T$ and $|T/I|<\mathfrak{a}$. Since $I$ is nilpotent, there exists a natural number $n$ with $I^m=0$. Consider the chain $0=I^m\subseteq I^{m-1}\subseteq\cdots\subseteq I\subseteq T$. Because $T$ is left Noetherian, each $I^i$ is a finitely generated left ideal. Consequently, for each $i$, $0\leq i\leq m-1$, the left $T$-module $I^i/I^{i+1}$ is finitely generated, and therefore $|I^i/I^{i+1}|<\mathfrak{a}$. In particular, $|I^{m-1}|<\mathfrak{a}$. By descending induction we obtain $|I^i|<\mathfrak{a}$, for all $i$, and finally $|T|<\mathfrak{a}$.
\end{proof}

Note that in the proof of the previous lemma we only used that $I$ is a nilpotent ideal whose powers $I^{i}$ are finitely generated left ideals of $T$. Under the hypotheses of the lemma, we immediately conclude that $T$ is finite (resp. countable) precisely when $T/I$ is finite (resp. countable).\\

We close this section with a result concerning the structure of maximal subrings of certain direct products of rings. Two rings $A$ and $B$ are called homomorphically non-isomorphic, if for every proper ideal $I$ of $A$ and every proper ideal $J$ of $B$, the rings $A/I$ and $B/J$ are not isomorphic. For example, if $A=\mathbb{M}_{n_1}(R_1)\times\cdots\times \mathbb{M}_{n_k}(R_k)$, where $k\geq 1$, each $n_i\geq 2$ and $R_i$ are arbitrary rings and $B$ is any commutative ring, then $A$ and $B$ are homomorphically non-isomorphic (since every homomorphic image of $A$ is noncommutative). With this preparation we state the following.

\begin{lem}\label{pt8}
Let $A$ and $B$ be homomorphically non-isomorphic rings and $T=A\times B$. Then $R$ is a maximal subring of $T$ if and only if either $R=A_1\times B$, where $A_1$ is a maximal subring of $A$, or $R=A\times B_1$, where $B_1$ is a maximal subring of $B$.
\end{lem}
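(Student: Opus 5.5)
The plan is to use the projections $\pi_A:T\to A$ and $\pi_B:T\to B$, together with the idempotent $e=(1,0)$.

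The converse direction is routine. Suppose $A_1$ is a maximal subring of $A$. Any subring $S$ with $A_1\times B\subseteq S$ contains $(0,1)$ and hence also $e=(1,1)-(0,1)$. Therefore $S=eS\times(1-e)S=\pi_A(S)\times B$, and $A_1\subseteq\pi_A(S)$. Maximality of $A_1$ then forces $S=A_1\times B$ or $S=T$. The case $A\times B_1$ is symmetric.

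For the forward direction, let $R$ be a maximal subring of $T$. I split into two cases according to whether $e\in R$.

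\emph{Case $e\in R$.} Then $R=eRe\times(1-e)R(1-e)=\pi_A(R)\times\pi_B(R)$, so $R$ is a product $A_1\times B_1$ of subrings. Since $R\neq T$, at least one factor is proper, say $A_1\ne A$. Then $R\subsetneq A_1\times B$, unless $B_1=B$, and maximality of $R$ gives $B_1=B$. Finally, $A_1$ is maximal in $A$: a subring strictly between $A_1$ and $A$ would produce a ring strictly between $R$ and $T$.

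\emph{Case $e\notin R$.} Here I will derive a contradiction from the hypothesis. Set $A_1=\pi_A(R)$, $B_1=\pi_B(R)$, $I=\{a\in A\mid (a,0)\in R\}$ and $J=\{b\in B\mid (0,b)\in R\}$. Then $I$ is an ideal of $A_1$, $J$ is an ideal of $B_1$, and Goursat's lemma gives a ring isomorphism $\varphi:A_1/I\to B_1/J$ with $R=\{(a,b)\mid \varphi(a+I)=b+J\}$. Since $1\notin I$ (else $e\in R$), these quotients are nonzero, so $I$ and $J$ are proper.

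The key step is to show that $R[e]=T$ forces $A_1=A$ and $B_1=B$. Indeed, $R[e]$ contains $eRe\times(1-e)R(1-e)=A_1\times B_1$ and $e$, and $A_1\times B_1$ is already a ring containing $e$. So $R\subsetneq A_1\times B_1$ (strictly, as $e\notin R$), and maximality gives $A_1\times B_1=T$.

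It remains to show that $I$ is an ideal of $A$, not merely of $A_1=A$; this holds since $A_1=A$. Likewise $J$ is an ideal of $B$. Then $A/I\cong B/J$ with $I,J$ proper, contradicting the assumption that $A$ and $B$ are homomorphically non-isomorphic. Hence $e\in R$, and the first case applies.

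The main point needing care is the Goursat description of a subring of $A\times B$ whose projections are surjective. Verifying that $\varphi$ is a well-defined ring isomorphism is standard, using the fact that $R$ is a subring containing $(1,1)$.
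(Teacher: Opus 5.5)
Your proof is correct and follows essentially the paper's route: the paper also splits on whether $(1,0)\in R$, gets surjectivity of both projections from maximality (phrased as $R+(A\times 0)=T$, so $R/M\cong B$ and $R/N\cong A$ with $M=R\cap(A\times 0)$, $N=R\cap(0\times B)$), and when $(1,0)\notin R$ uses the proper ideal $M+N$ of $R$ to produce a common proper quotient of $A$ and $B$. Your Goursat isomorphism $A/I\cong B/J$ is exactly this identification $A/I\cong R/(M+N)\cong B/J$, so the two arguments differ only in packaging.
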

\begin{proof}
It is clear that if any subring of the forms described above is maximal in $T$. Conversely, let $R$ be a maximal subrings of $T$. If $0\times B\subseteq R$, then it is not hard to see that $R=A_1\times B$, where $A_1$ is a maximal subring of $A$. Similarly, if $A\times 0\subseteq R$, then $R=A\times B_1$, where $B_1$ is a maximal subring of $B$. Hence assume that $R$ contains neither $A\times 0$ nor $0\times B$. Let $M:=(A\times 0)\cap R=\{(a,0)\ |\ a\in A_1\}=A_1\times 0$, where $A_1$ is a proper subset of $A$ and $N:=(0\times B)\cap R=\{(0,b)\ |\ b\in B_1\}=0\times B_1$, where $B_1$ is a proper subset of $B$. Clearly $M$ and $N$ are ideals of $R$ and $M+N=A_1\times B_1$. Also note that since $A\times 0\nsubseteq R$ and $R$ is a maximal subring of $T$, we have $R+(A\times 0)=T$; Consequently, $R/M\cong B$. Similarly, $R/N\cong A$. Now we distinguish two cases (recall that $(1,1)\in R$): $(\bf i)$ $(1,0)\in R$ (equivalently $(0,1)\in R$). Then $M+N=R$, so $R=A_1\times B_1$. Hence $A_1$ and $B_1$ are proper subrings of $A$ and $B$, respectively. But then $R=A_1\times B_1\subsetneq A\times B_1\subsetneq T$, contradicting maximality of $R$. $({\bf ii})$ $(1,0)\notin R$ and $(0,1)\notin R$. Thus $(1,1)\notin M+N$, so $M+N$ is a proper ideal of $R$ contains $M$ and $N$. From $R/M\cong B$ and $R/N\cong A$, we obtain that $A$ and $B$ have isomorphic homomorphic images, which is impossible, because $A$ and $B$ are homomorphically non-isomorphic. Since both cases lead to a contradiction, our assumption that $R$ contains neither $A\times 0$ nor $0\times B$ cannot hold. Thus $R$ must be one of the two forms stated in the lemma.
\end{proof}

\section{Rings with Infinitely Many Maximal Subrings}
In this section we study the number of maximal subrings for general noncommutative rings, with particular emphasises on division rings, simple rings and $K$-algebras over an infinite field $K$. Let $E$ be a field with prime subfield $F$. By \cite[Corollary 1.5]{azomn}, if $E$ has only finitely many maximal subrings, then $F\cong\mathbb{Z}_p$ for some prime number $p$ and $E$ is algebraic over $F$ (for a full characterization of such fields see \cite{azffs}). Moreover, if $E$ is not an absolutely algebraic field, then $|\RgMax(E)|
\geq \max\{\trdeg(E/F), |E|\}$, by \cite[Corollary 1.5]{azomn}. For division rings we obtain the following result.

\begin{prop}\label{t1}
Let $D$ be a noncommutative division ring. If $D$ has a maximal subring $R$ which is a division ring, then $D$ has infinitely many maximal subrings. Consequently, if $D$ has only finitely many maximal subrings, then each $x\in D\setminus C_D(D)$ is not algebraic over $C_D(D)$ (i.e., $C_D(D)$ is algebraically closed in $D$) and $C_D(R)=C_D(D)$ for each maximal subring $R$ of $D$.
\end{prop}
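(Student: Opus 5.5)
The plan is to use the fact that inner automorphisms permute maximal subrings. For each $u\in U(D)=D\setminus\{0\}$ the map $t\mapsto utu^{-1}$ is a ring automorphism of $D$, so $uRu^{-1}$ is again a maximal subring of $D$, and it is a division ring whenever $R$ is. It therefore suffices to show that a maximal subring $R$ which is a division ring has infinitely many distinct conjugates $uRu^{-1}$.

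First I would show that $R$ is a proper, noncentral division subring of $D$. It is proper since it is a maximal subring. If $R\subseteq C_D(D)$, then for any $x\in D\setminus R$ the ring $R[x]$ is commutative. Maximality gives $R[x]=D$, so $D$ would be commutative, a contradiction. Next I would invoke the finite-conjugacy refinement of the Cartan--Brauer--Hua theorem (Faith, Herstein): in a noncommutative division ring, a proper noncentral division subring has infinitely many conjugates. This gives infinitely many maximal subrings $uRu^{-1}$. If I wanted to avoid citing that theorem, I would imitate the Hua-identity proof. Suppose only finitely many conjugates occur. Fix $x\notin R$ and consider the elements $x+r$ with $r\in R$. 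By pigeonhole, infinitely many of them give the same conjugate, so there are many quotients $(x+s)^{-1}(x+r)$ lying in the normalizer of $R$. I would then try to derive $x\in R$ or $R$ central from this.

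For the consequences, assume $D$ has only finitely many maximal subrings; then no maximal subring of $D$ is a division ring. Let $C=C_D(D)$ and suppose, for a contradiction, that some $x\in D\setminus C$ is algebraic over $C$. Then $C[x]$ is a field that is finite-dimensional over $C$, and every subring of $D$ containing $C$ all of whose elements are algebraic over $C$ is a division ring. The aim is to produce a maximal subring that is a division ring. The natural candidate is a maximal subring containing $C[x]$, or a conjugate of one. Showing that such a maximal subring exists and is a division ring is the step I expect to be the main obstacle. The first thing I would try is to pick $y$ with $xy\neq yx$ and use the $C$-algebra generated by the finitely many conjugates of $C[x]$, applying the Zorn argument from Section 2 to a ring extension of the form $S\subsetneq S[\alpha]$.

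For $C_D(R)=C_D(D)$, take a maximal subring $R$ and let $z\in C_D(R)$. If $z\notin R$, then $R[z]=D$, and $z$ commutes with both $R$ and $z$, so $z\in C$. If $z\in R$, then $z$ lies in the center of $R$. In that case I would show that either $z$ is central in $D$, or $R$ is forced to be a division ring (for instance because $R\ni z$ is normalized by $z$ and is a maximal subring). The latter contradicts the first part of the proposition.
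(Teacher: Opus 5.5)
Your first part is the paper's argument. Conjugates $uRu^{-1}$ of a maximal subring are again maximal subrings, and Faith's theorem on conjugates of proper noncentral division subrings gives infinitely many of them. Your check that $R\not\subseteq C_D(D)$ supplies a hypothesis the paper leaves implicit. The Hua-style fallback is not an argument, but it is not needed.

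The genuine gap is the second assertion. From a noncentral $x$ algebraic over $C=C_D(D)$ you never produce a maximal subring of $D$ that is a division ring, and the route you sketch does not work as stated. There is no finite family of conjugates of $C[x]$ to use. $C[x]$ is a proper (it is commutative) noncentral division subring, so by the very theorem you invoked in the first part it has infinitely many conjugates. Nor is there any reason a maximal subring containing $C[x]$ should be a division ring: your observation only covers subrings all of whose elements are algebraic over $C$. (The paper does not prove this step either; it imports it from Corollary 2.10 of its reference on maximal subrings of division rings.)

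The missing idea is to pass to the centralizer rather than the field.
\begin{itemize}
\item Put $K=C[x]$, $n=[K:C]\geq 2$ and $S=C_D(x)=C_D(K)$, a proper division subring.
\item By the double centralizer theorem for division rings, $D$ is an $n$-dimensional right $S$-vector space, say $D=e_1S+\cdots+e_nS$. To see this, note $D\otimes_C K$ is simple and left Artinian of length $n$, acting on $D$ by $(d\otimes k)\cdot y=dyk$. Then $D$ is its simple module, and its endomorphisms are the right multiplications by elements of $C_D(K)$.
\item A chain of proper subrings containing $S$ cannot have all the $e_i$ in its union, so Zorn gives a maximal subring $R\supseteq S$.
\item $R$ is a division ring. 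For $0\neq r\in R$, the map $t\mapsto rt$ is an injective right $S$-linear endomorphism of the finite-dimensional space $R$, hence onto, so $r^{-1}\in R$.
\end{itemize}
Now the first part applies.

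In the last assertion, your case $z\notin R$ is fine, but the case $z\in R$ is left as a hope (``normalized by $z$'' proves nothing). The paper's one-line argument covers both cases at once. If $z\in C_D(R)\setminus C$, then $R\subseteq C_D(z)\subsetneq D$. Maximality then forces $R=C_D(z)$, which is a division ring, contradicting the first part.
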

\begin{proof}
Assume that $R$ is a maximal subring of $D$ which is a division ring. Then by \cite[Theorem 3]{faithcd}, there exist infinitely many subrings of the form $xRx^{-1}$, where $x\in D\setminus\{0\}$. It is evident that for each $0\neq x\in D$, $xRx^{-1}$ is a maximal subring of $D$. Thus $D$ has infinitely many maximal subrings. For the next part, assume that $D$ has only finitely many maximal subrings, then by the first part we conclude that each maximal subring of $D$ is not a division ring. Now, if $x$ is a non-centeral element of $D$ which is algebraic over the center of $D$, then by \cite[Corollary 2.10]{azdiv}, $D$ has a maximal subring which is a division ring which is impossible. Hence the center of $D$ is algebraically closed in $D$. For the final part note that if $R$ is a maximal subring of $D$ and $\alpha\in C_D(R)$ is not a central element of $D$, then clearly $R=C_D(\alpha)$, i.e., $R$ is a division ring which is absurd. Thus $C_D(R)=C_D(D)$.
\end{proof}

\begin{cor}\label{t1a}
Let $D$ be an infinite division ring with $C_D(D)=F$. Then either $D$ has infinitely many maximal subrings or $[D:F]\geq |F|$ (in fact, for each $x\in D\setminus F$ we have $[F(x):F]\geq |F|$).
\end{cor}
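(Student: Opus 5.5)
We prove the contrapositive using Proposition \ref{t1}. Suppose $D$ has only finitely many maximal subrings. If $D$ is commutative, then $D=F$ and the bracketed claim is vacuous. The inequality $[D:F]\geq |F|$, however, fails for $D=F$; so in this case we should treat the corollary as asserting the dimension bound only for noncommutative $D$, or read it through the parenthetical $x\in D\setminus F$ formulation. We therefore assume $D$ is noncommutative. By Proposition \ref{t1}, every $x\in D\setminus F$ is transcendental over $F$. Hence $F(x)$, the subfield of $D$ generated by $F$ and $x$, is isomorphic to the rational function field $F(X)$. Here $F(x)$ is a field because $x$ commutes with $F$, and it lies inside the division ring $D$.

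The key step is a cardinality estimate for $[F(X):F]$. The elements $1/(X-a)$ for $a\in F$ are $F$-linearly independent in $F(X)$, by the partial fraction argument. Indeed, given a relation $\sum_i c_i/(X-a_i)=0$ with distinct $a_i$, multiply through by $\prod_j (X-a_j)$ and evaluate at $X=a_k$; this forces $c_k=0$. Consequently $[F(x):F]\geq |F|$. Since $F(x)\subseteq D$, and $D$ is a left $F(x)$-vector space, we get $[D:F]\geq [F(x):F]\geq |F|$.

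It remains to ensure $D\setminus F\neq\emptyset$, which holds since $D$ is noncommutative. We also note that the infinitude of $D$ is used only to make the statement meaningful. If $F$ is finite, the bound $[D:F]\geq|F|$ is automatic once $x$ is transcendental, because then $[F(x):F]$ is infinite.

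The only delicate point is the linear-independence estimate, which is standard, together with the bookkeeping for the commutative case noted above. All the substantive input is Proposition \ref{t1}.
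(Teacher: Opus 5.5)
Your proof is correct and is essentially the paper's. Proposition \ref{t1} forces every $x\in D\setminus F$ to be transcendental over $F$ when $D$ is noncommutative with finitely many maximal subrings, and the Amitsur-type linear independence of the elements $(x-a)^{-1}$, $a\in F$, then gives $[F(x):F]\geq|F|$. The paper uses the same argument, borrowed from Lam's Theorem 4.20 and stated contrapositively as ``$[F(x):F]<|F|$ implies $x$ algebraic''.

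Your caveat about the commutative case is justified. For example, an algebraic closure of $\mathbb{Z}_p$ has no maximal subrings, yet $[D:F]=1<|F|$. So the paper's argument, like yours, only establishes the parenthetical claim, and the bound $[D:F]\geq|F|$ only for noncommutative $D$.
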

\begin{proof}
Let $x\in D\setminus F$ and $[F(x):F]<|F|$. Then analogous to the proof of \cite[Theorem 4.20]{lam}, we deduce that $F(x)$ is algebraic over $F$. Hence $x$ is algebraic over $F$ and thus we are done by the previous result.
\end{proof}

\begin{cor}\label{t2}
Let $D$ be an existentially complete noncommutative division ring over a field $K$. Then $D$ has infinitely many maximal subrings.
\end{cor}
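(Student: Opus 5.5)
The plan is to reduce the statement to Proposition \ref{t1}. By that proposition, if $D$ has only finitely many maximal subrings, then its center $F:=C_D(D)$ is algebraically closed in $D$: no noncentral element of $D$ is algebraic over $F$. So it suffices to exhibit a single noncentral element of $D$ that is algebraic over $F$; in fact the argument will give one that is algebraic over $K$.

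Recall what existential completeness over $K$ gives. Any finite system of equations and inequations with coefficients in $D$ that is solvable in some division ring extension of $D$ (a $K$-algebra) is already solvable in $D$. It is standard that the center of an existentially complete division ring is exactly $K$, and this is the only additional fact about such $D$ that I will use. First, I would check that $K \subseteq F$. Next, I would rule out $F\supsetneq K$: if $c\in F\setminus K$, embed $D$ in a larger division ring in which $c$ fails to commute with some new element (for instance a skew Laurent-type extension), so that $yc\neq cy$ has a solution there, hence in $D$, a contradiction.

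Now fix a noncentral element. Take $a\in D$ and work in $D\otimes_K K(\sqrt{\alpha})$ for a suitable quadratic extension. Alternatively, and more simply, use the system
$$x^2=\beta,\qquad xd\neq dx,$$
with $d\in D$ noncentral and $\beta\in K$ a nonsquare in $K$. This system is solvable in some division ring extension of $D$, for example the quaternion-type ring generated over $D$ by an element $x$ with $x^2=\beta$ acting through a suitable automorphism. Realizing this extension concretely is the main obstacle. If $K$ is algebraically closed there is no nonsquare, and I would instead use $x^2=x$ together with $x\neq 0,1$. That system is not solvable in any division ring, so in that case I would switch to $x^p$ or to a cubic equation with no root in $K$.

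A cleaner route, which I would try first, uses an inner conjugate. Pick $d\in D\setminus K$ and solve $y d y^{-1}=e$, where $e$ is chosen so that $d$ and $e$ generate a noncommutative subring. Existential completeness then shows that $D$ contains copies of any finite-dimensional central division $K$-algebra $Q$ compatible with $D$, since $D\otimes_K Q$ embeds in a division ring. In particular $D$ contains a noncentral element algebraic over $K=F$, and Proposition \ref{t1} then gives that $\RgMax(D)$ is infinite. The delicate point is producing the ambient division ring in which the chosen equation has a noncentral solution. I would handle it by taking a noncommutative finite-dimensional division algebra $Q$ over $K$ (or over a finite extension, via $D\otimes_K Q$ and Cohn's coproduct of division rings, which always embeds $D$ and $Q$ into a common division ring over $K$) and encoding $Q$'s multiplication table as a finite system of equations.
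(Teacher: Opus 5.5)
Your reduction has a genuine gap: it cannot work when $K$ is algebraically closed, and the statement covers that case (existentially complete $K$-division rings exist for every $K$ and are automatically noncommutative). As you note, the center of such a $D$ is $K$. If $x\in D$ is algebraic over $K$, then $K[x]$ is a commutative domain of finite dimension over $K$. Hence it is a finite field extension of $K$, hence equal to $K$. So for algebraically closed $K$ the center \emph{is} algebraically closed in $D$, there is no noncentral algebraic element at all, and the second half of Proposition \ref{t1} gives no contradiction. This is the wall that your search for a nonsquare $\beta$, a cubic with no root, or an equation $x^p=\beta$ runs into, and no choice of polynomial equation gets around it.

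For $K$ not algebraically closed your idea can be repaired, but not as written. A noncommutative finite-dimensional division $K$-algebra $Q$ need not exist (e.g.\ for finite $K$). Also, $D\otimes_K Q$ is generally not a domain, since it contains $L\otimes_K L$ for any field $K\subsetneq L$ embedded in both $D$ and $Q$. Instead, take a proper finite extension $L=K(\theta)$ with minimal polynomial $f$ and pass to the field of fractions of Cohn's coproduct $D*_K L$. There $\theta d\neq d\theta$ for $d\in D\setminus K$, so the system $f(x)=0$, $xd\neq dx$ can be pulled back into $D$.

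The missing idea is to use the \emph{first} assertion of Proposition \ref{t1} directly. What you really need is a maximal subring of $D$ that is itself a division ring, and it need not come from an algebraic element. The paper gets it from \cite[Proposition 3.5(1)]{azdiv}: an existentially complete $D$ has a maximal subring of the form $R=C_D(x)$. A centralizer in a division ring is a division subring, so Faith's theorem gives infinitely many distinct conjugates $yRy^{-1}$. Each of these is again a maximal subring of $D$, and this argument does not care whether $K$ is algebraically closed.
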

\begin{proof}
By \cite[$(1)$ of Proposition 3.5]{azdiv}, $D$ has a maximal subring of the form $R=C_D(x)$ for some $x\in D$. It is clear that $R$ is a division ring and hence we are done by Proposition \ref{t1}.
\end{proof}

To obtain further results, we first need to characterize the maximal subrings of $R\times R$ where $R$ is a simple ring. This done as follows. 

\begin{thm}\label{t3}
Let $R$ be a simple ring. Then $S$ is a maximal subring of $T:=R\times R$ if and only if $S$ satisfies exactly in one of the following conditions:
\begin{enumerate}
\item $S=A\times R$ or $S=R\times A$, where $A$ is a maximal subring of $R$.
\item There exist $\sigma_1, \sigma_2\in \Aut(R)$ such that $S=\{(\sigma_1(x),\sigma_2(x))\ |\ x\in R\}$.
\end{enumerate}
Moreover in case $(2)$, we have $S\cong R$.
\end{thm}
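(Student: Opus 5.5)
We follow the proof of Lemma \ref{pt8}, writing $T=R\times R$ and setting $M:=(R\times 0)\cap S$ and $N:=(0\times R)\cap S$ for a maximal subring $S$.

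First the converse direction. Subrings of type $(1)$ are clearly maximal. For type $(2)$, put $\sigma=\sigma_2\sigma_1^{-1}\in\Aut(R)$, so that $S=\{(y,\sigma(y))\mid y\in R\}$. The automorphism $(a,b)\mapsto(a,\sigma^{-1}(b))$ of $T$ carries $S$ onto the diagonal $\Delta$. Since $R$ is simple, $\Delta$ is a maximal subring of $T$ by \cite[Theorem 2.7]{azq} (the case $I=0$, which is a maximal ideal). Hence $S$ is maximal, and $S\cong R$ through $x\mapsto(\sigma_1(x),\sigma_2(x))$, which is injective. The two types are mutually exclusive: a type $(2)$ subring projects onto $R$ in both coordinates, while $A\times R$ projects onto the proper subring $A$ in the first coordinate.

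Now the forward direction. Let $S$ be a maximal subring of $T$. If $0\times R\subseteq S$, then $S=A\times R$ with $A=\pi_1(S)$. Here $A$ is a proper subring of $R$, and maximality of $S$ forces $A$ to be maximal in $R$. The case $R\times 0\subseteq S$ is symmetric.

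Otherwise, $M=A_1\times 0$ and $N=0\times B_1$ with $A_1,B_1\subsetneq R$, and as in Lemma \ref{pt8} we have $S+(R\times 0)=T=S+(0\times R)$. Hence both projections $\pi_1,\pi_2\colon S\to R$ are surjective with kernels $N$ and $M$, so $S/N\cong R\cong S/M$. If $(1,0)\in S$, then $S=A_1\times B_1$, which contradicts maximality exactly as in case (i) of Lemma \ref{pt8}. So $(1,0)\notin S$, and $M+N$ is a proper ideal of $S$.

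The key step uses simplicity of $R$. The ideal $M+N$ contains $N$, and $S/N\cong R$ is simple, so $M+N=N$, i.e. $M\subseteq N$. But $M\subseteq R\times 0$ and $N\subseteq 0\times R$, so $M\subseteq N$ forces $M=0$. Symmetrically $N=0$. Thus $\pi_1$ and $\pi_2$ are ring isomorphisms $S\to R$.

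Set $\sigma_1=\pi_1\circ\phi$ and $\sigma_2=\pi_2\circ\phi$ for a fixed isomorphism $\phi\colon R\to S$; one can take $\phi=\pi_1^{-1}$, giving $\sigma_1=\mathrm{id}$ and $\sigma_2=\pi_2\pi_1^{-1}$. Then $\sigma_1,\sigma_2\in\Aut(R)$ and $S=\{(\sigma_1(x),\sigma_2(x))\mid x\in R\}$, which is case $(2)$, with $S\cong R$.

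The main obstacle is this key step: showing that the proper ideal $M+N$ collapses to $0$. It works only because $S/M$ and $S/N$ are each isomorphic to the simple ring $R$. In the general setting of Lemma \ref{pt8}, this step is exactly where the contradiction came from.
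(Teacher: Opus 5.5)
Your proof is correct. The forward direction is essentially the paper's argument: the same ideals $M=(R\times 0)\cap S$ and $N=(0\times R)\cap S$, the same split on whether $(1,0)\in S$, and the same use of simplicity of $S/M\cong R\cong S/N$ to force $M=N=0$.

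You do simplify the last step. Since $S+(R\times 0)=T=S+(0\times R)$, the projections $\pi_1,\pi_2$ are already surjective, and once their kernels vanish they are isomorphisms. So you can simply read off $\sigma_1=\mathrm{id}$ and $\sigma_2=\pi_2\pi_1^{-1}$. The paper instead fixes an abstract isomorphism $R\to S$, gets injectivity of its two components from simplicity of $R$, and proves their surjectivity separately via $T=S+S(1,0)$.

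The genuinely different part is the converse for type $(2)$.
\begin{itemize}
\item The paper checks maximality by hand. It shows $S[(1,0)]=T$, and then, for $(x,y)\notin S$, it finds $(x_0,0)\in S[(x,y)]$ with $x_0\neq 0$. It pulls a relation $\sum r_ix_0s_i=1$ back through $\sigma_1$ to get $(1,0)\in S[(x,y)]$.
\item You instead write $S=\{(y,\sigma(y))\}$ with $\sigma=\sigma_2\sigma_1^{-1}$. You transport $S$ onto the diagonal $\Delta$ by the automorphism $(a,b)\mapsto(a,\sigma^{-1}(b))$ of $T$, and then invoke \cite[Theorem 2.7]{azq}, which the paper already recalls in Section 2.
\end{itemize}
Your route is shorter and explains why the type $(2)$ subrings are maximal: they are exactly the images of $\Delta$ under automorphisms of $T$ of the form $\mathrm{id}\times\tau$ with $\tau\in\Aut(R)$. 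The cost is that it depends on the external classification of subrings containing $\Delta$, whereas the paper's computation is self-contained.

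You also prove explicitly that the two types are mutually exclusive, which the paper leaves implicit.
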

\begin{proof}
First assume that $S$ is a subring of $T$ which is of the forms given in $(1)$ or $(2)$. It is clear that if $S$ is of the forms given in $(1)$, then $S$ is a maximal subring of $T$. Hence assume that $S$ is of the form in $(2)$. Define $\sigma: R\longrightarrow S$, by $\sigma(x)=(\sigma_1(x),\sigma_2(x))$, then one can easily check that $\sigma$ is an isomorphism and therefore $S\cong R$ is a simple ring. To show $S$ is a maximal subring of $T$, we first prove that $S[(1,0)]=T$. Clearly $(0,1)\in S[(1,0)]$, because $(1,1)\in S$. Now let $(a,b)\in T=R\times R$, thus there exist $x,y\in R$ such that $\sigma_1(x)=a$ and $\sigma_2(y)=b$, because $\sigma_1, \sigma_2\in \Aut(R)$. Thus by definition of $S$ we conclude that $(\sigma_1(x),\sigma_2(x))=(a,\sigma_2(x))\in S$ and $(\sigma_1(y),\sigma_2(y))=(\sigma_1(y),b)\in S$. Therefore $(a,b)=(a,\sigma_2(x))(1,0)+(\sigma_1(y),b)(0,1)\in S[(1,0)]$. Thus $S[(1,0)]=T$. Now we prove that for each $(x,y)\in T\setminus S$, we have $S[(x,y)]=T$, which shows that $S$ is a maximal subring of $T$. Since $\sigma_2\in \Aut(R)$, there exists $c\in R$ such that $\sigma_2(c)=y$. This implies that $(\sigma_1(c),y)\in S\subseteq S[(x,y)]$ and a fortiori $(x-\sigma_1(c),0)\in S[(x,y)]$. Since $(x,y)\notin S$, we infer that $x_0:=x-\sigma_1(c)\neq 0$. Thus $Rx_0R=R$, because $R$ is a simple ring. Hence there exist $n\in\mathbb{N}$ and $r_i,s_i\in R$, $1\leq i\leq n$, such that $\sum_{i=1}^n r_ix_0s_i=1$. From $\sigma_1\in \Aut(R)$, we deduce that there exist $a_i, b_i\in R$, $1\leq i\leq n$, such that $\sigma_1(a_i)=r_i$ and $\sigma_1(b_i)=s_i$. It follows that
$$(1,0)=\sum_{i=1}^n (\sigma_1(a_i),\sigma_2(a_i))(x_0,0)(\sigma_1(b_i),\sigma_2(b_i))\in S[(x,y)]$$
This implies that $T=S[(1,0)]\subseteq S[(x,y)]$ and then $S[(x,y)]=T$. Consequently, $S$ is a maximal subring of $T$.\\
Conversely, assume that $S$ is a maximal subring of $T$. Suppose $I=R\times 0$ and $J=0\times R$. If either $I\subseteq S$ or $J\subseteq S$, then $S$ is of the forms which is given in $(1)$. Thus assume that $I$ and $J$ are not contained in $S$. Therefore $I_1=I\cap S=\{(x,o)\ |\ x\in S_1\}=S_1\times 0$ and $J_1=J\cap S=\{(0,y)\ |\ y\in S_2\}=0\times S_2$, where $S_1$ and $S_2$ are proper subsets of $R$. Clearly $I_1+J_1=S_1\times S_2\subseteq S$. Also note that since $S$ is a maximal subring of $T$ which does not contain $I$ and $J$, we deduce that $S+I=T=S+J$ and thus $S/I_1\cong R\cong S/J_1$. Consequently, $I_1$ and $J_1$ are maximal ideals of $S$. Now we have two cases: $(\bf a)$ If $(1,0)\in S$ (which is equivalent to $(0,1)\in S$), then $(1,1)=(1,0)+(0,1)\in I_1+J_1$, which means $I_1+J_1=S$ and therefore $S=S_1\times S_2$. Thus $S_1$ and $S_2$ are proper subring of $R$. But obviously $S\subsetneq R\times S_2\neq T$, which contradicts to the maximality of $S$. $(\bf b)$ Hence we may suppose that $(1,0), (1,0)\notin S$. It implies that $I_1+J_1=S_1\times S_2$ is a proper ideal of $S$ which contains $I_1$ and $J_1$. But as we see earlier, $I_1$ and $J_1$ are maximal ideals of $S$. Therefore we obtain that $I_1=J_1$. So $I_1=J_1=0$ and then $S\cong R$. Let $\sigma: R\longrightarrow S$ be a ring isomorphism, and for each $x\in R$, let $\sigma(x)=(\sigma_1(x),\sigma_2(x))$. Then clearly $\sigma_1$ and $\sigma_2$ are ring endomorphisms of $R$. Since $R$ is a simple ring we deduce that $\sigma_1$ and $\sigma_2$ are one-one. Finally note that from $S=\{(\sigma_1(x),\sigma_2(x))\ |\ x\in R\}$ and $(1,0)\notin S$, we have $S[(1,0)]=T$. It follows that $T=S+S(1,0)$ (note, $(1,0)$ is a central idempotent of $T$). To show that $\sigma_1$ and $\sigma_2$ are onto, let $x\in R$, then $(x,0)\in S+S(1,0)$. Consequently there exist $a,b\in R$ such that $(x,0)=(\sigma_1(a),\sigma_2(a))+(\sigma_1(b),\sigma_2(b))(1,0)$, which shows that $\sigma_1(b)=x$, i.e., $\sigma_1$ is onto. Analogously, $\sigma_2$ is onto, and hence we are done.
\end{proof}

We now present a main result of this paper concerning simple rings.

\begin{thm}\label{t4}
Let $T$ be a simple ring which is not a division ring. Then $T$ has only finitely many maximal subring if and only if $T$ is finite (i.e., $T\cong\mathbb{M}_n(F)$, where $F$ is a finite field and $n\geq 2$)
\end{thm}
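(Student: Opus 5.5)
The plan is to show that finitely many maximal subrings forces finitely many maximal left ideals, and that this in turn forces $T$ to be finite. The converse is trivial: a finite ring has only finitely many subrings at all. So assume $T$ is simple, not a division ring, and $\RgMax(T)$ is finite.

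First, no maximal left ideal $M$ of $T$ is a two-sided ideal. If it were, simplicity would give $M=0$. Then $0$ would be a maximal left ideal, so $T$ would be simple as a left module over itself. Every nonzero element would then generate $T$ as a left ideal and so have a left inverse, which makes $T$ a division ring, a contradiction. Hence $\Maxl(T)=\Maxl(T)\setminus\Max(T)$. By Theorem \ref{pt6}(1), each $M\in\Maxl(T)$ has idealizer $\mathbb{I}(M)$ that is a maximal subring of $T$.

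The key step is that the map $M\longmapsto \mathbb{I}(M)$ from $\Maxl(T)$ to $\RgMax(T)$ is injective. Suppose $M\neq N$ are maximal left ideals with $\mathbb{I}(M)=\mathbb{I}(N)=R$. Then $M,N\subseteq R$, and $M+N$ is a left ideal of $T$ properly containing $M$, so $M+N=T$ by maximality. This gives $T\subseteq R$, contradicting that $R$ is a proper subring. (Alternatively, Theorem \ref{pt6}(2) says a maximal subring containing such an $A$ equals $\mathbb{I}(A)$, which gives the same conclusion.) Therefore $|\Maxl(T)|\leq|\RgMax(T)|<\infty$. I expect this injectivity to be the crux; once it holds, the rest is standard structure theory.

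Now let $M_1,\dots,M_k$ be the maximal left ideals of $T$. Their intersection is $J(T)$, a proper ideal, so $J(T)=0$ by simplicity. Hence $T$ embeds as a left $T$-module into $\bigoplus_{i=1}^k T/M_i$. This module is semisimple of finite length, so $T$ is semisimple Artinian, and being simple, $T\cong\mathbb{M}_n(D)$ for a division ring $D$ with $n\geq 2$. If $D$ were infinite, Corollary \ref{pt4} would give infinitely many maximal left ideals, contradicting finiteness. So $D$ is finite, hence a finite field $F$ by Wedderburn's theorem, and $T\cong\mathbb{M}_n(F)$ is finite.
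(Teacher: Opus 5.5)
Your proposal is correct and follows the paper's proof essentially step for step: you show $M\mapsto\mathbb{I}(M)$ is injective on $\Maxl(T)$ via $M+N=T$, then use $J(T)=0$ to get a semisimple Artinian ring $\mathbb{M}_n(D)$ with $n\geq 2$, and use Corollary \ref{pt4} to force $D$ to be a finite field. You also spell out why no maximal left ideal of $T$ can be two-sided (otherwise $T$ would be a division ring), which the paper only asserts.
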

\begin{proof}
We first show that $\Maxl(T)$ is finite. Otherwise, let $M_1, M_2,\ldots$ be distinct maximal left ideals of $T$. Then for each $i$, by Theorem \ref{pt6}$(1)$, $A_i:=\mathbb{I}_T(M_i)$ is a maximal subring of $T$ which contains $M_i$ (note, for each $i$, $M_i$ is not an ideal of $T$, because $T$ is simple ring which is not a division ring). Since whenever $i\neq j$, we have $M_i+M_j=T$, then we have $A_i\neq A_j$. Thus $T$ has infinitely many maximal subrings which is absurd. Thus $\Maxl(T)=\{M_1,\ldots, M_n\}$. Now from the fact that $J(T)=0$, we deduce that $M_1\cap\cdots\cap M_n=0$. Consequently, $T$ embeds in $T/M_1\times\cdots\times T/M_n$. A fortiori, $T$ is a left Artinian ring. It follows that $T\cong\mathbb{M}_n(D)$, for some division ring $D$ and $n\geq 2$ (note, $T$ is not a division ring). If $D$ is an infinite division ring, then $\mathbb{M}_n(D)$, where $n\geq 2$, has infinitely many maximal left (right) ideals, by Corollary \ref{pt4}. Thus $D$ is finite and then $D$ is a finite field. This implies that $T$ is a finite ring, which complete the proof.
\end{proof}

We recall that for a field $K$, the ring $K\times K$ has only finitely many maximal subrings if and only if $K$ is finite, see \cite[Corollary 3.5]{azomn}. The following result generalizes this fact.

\begin{prop}\label{t5}
Let $R$ be a simple ring. Then $R\times R$ has only finitely many maximal subrings if and only if $R$ is finite.
\end{prop}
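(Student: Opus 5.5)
Both directions go through Theorem \ref{t3}, which lists every maximal subring of $T:=R\times R$ for a simple ring $R$. If $R$ is finite, then $T$ is finite, so it has only finitely many subrings and hence only finitely many maximal subrings. The real work is the converse. Assume $T$ has only finitely many maximal subrings; we show $R$ is finite.

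First, $R$ itself has only finitely many maximal subrings. By Theorem \ref{t3}(1), each maximal subring $A$ of $R$ gives a maximal subring $A\times R$ of $T$, and distinct $A$ give distinct subrings. We now split into cases according to whether $R$ is a division ring.

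\textbf{Case 1: $R$ is not a division ring.} Then $R$ is simple, not a division ring, and has only finitely many maximal subrings. Theorem \ref{t4} gives at once that $R$ is finite.

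\textbf{Case 2: $R=D$ is a division ring.} This is the case I expect to be the main obstacle, since Theorem \ref{t4} no longer applies. Here I use type $(2)$ of Theorem \ref{t3}. For each $\sigma\in\Aut(D)$ the graph $S_\sigma=\{(x,\sigma(x))\mid x\in D\}$ is a maximal subring of $T$. Distinct automorphisms give distinct graphs: if $S_\sigma=S_\tau$, then $(x,\sigma(x))$ and $(x,\tau(x))$ both lie in this graph, which forces $\sigma(x)=\tau(x)$ for every $x$. Hence $\Aut(D)$ is finite. In particular the group of inner automorphisms $x\mapsto axa^{-1}$ is finite, and this group is isomorphic to $D^*/F^*$, where $F=C_D(D)$ is the center.

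Now suppose $D$ is infinite and aim for a contradiction.
\begin{itemize}
\item If $D$ is commutative, then $D$ is a field. In that situation \cite[Corollary 3.5]{azomn} (fact (7) of the introduction) says $D\times D$ has infinitely many maximal subrings unless $D$ is finite, a contradiction.
\item If $D$ is noncommutative, finiteness of $D^*/F^*$ means each $x\in D^*$ satisfies $x^m\in F$ for $m=[D^*:F^*]$. So every element of $D$ is algebraic over $F$. Since $D$ is noncommutative, some $x\in D$ is noncentral, and it is algebraic over $F$. By Proposition \ref{t1}, $D$ then has infinitely many maximal subrings, hence so does $T$, again a contradiction.
\end{itemize}
A cleaner route for the noncommutative subcase: $F^*$ has finite index in $D^*$, and a classical theorem (Brauer--Cartan--Hua type, or a direct argument) then forces $D$ to be commutative when $D$ is infinite. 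Either argument completes Case 2, so in all cases $R$ is finite.
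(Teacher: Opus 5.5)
Your proof is correct and follows essentially the paper's route. You get finiteness of the maximal subrings of $R$ from Theorem \ref{t3}(1), handle the non-division case with Theorem \ref{t4} and the field case with fact (7) of the introduction, and in the noncommutative division case combine the graphs of inner automorphisms (Theorem \ref{t3}(2)) with Proposition \ref{t1}. The only difference is cosmetic: you bound the whole group $D^*/F^*$ and use Lagrange to get $x^m\in F$, while the paper uses conjugation by the powers $x^n$ of a single noncentral element; both yield a noncentral algebraic element, to which Proposition \ref{t1} applies.
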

\begin{proof}
It is trivial that if $R$ is finite then $R\times R$ has only finitely many maximal subrings. Conversely, assume that $R\times R$ has only finitely many maximal subrings. Theorem \ref{t3} implies that $R$ has only finitely many maximal subrings. If $R$ is not a division ring, then we are done by the previous theorem. Thus suppose that $R$ is a division ring and $R\times R$ has only finitely many maximal subrings. By the preceding comment, we may assume that $R$ is not a field, i.e., $C_R(R)\subsetneq R$. Now, let $R$ be an infinite ring. Then we have two cases: $(\bf a)$ If there exists a non-central element $x$ of $R$ which is algebraic over the center of $R$, then by Proposition \ref{t1}, $R$ has infinitely many maximal subrings, which is impossible. $(\bf b)$ Hence assume that each non-central element $x$ of $R$ is not algebraic over the center of $R$. Let $x\in R$ be a non-central element and for each $n\geq 1$ and $a\in D$, let $\sigma_n(a)=x^n a x^{-n}$ be the inner automorphism of $R$ induced from $x^n$. If $\sigma_n=\sigma_m$ for some $n>m$, then $\sigma_1^{n-m}=1$ and therefore $x^{n-m}$ is a central element of $R$. This implies that $x$ is integral over the center of $R$, which contradicts to the assumption that $x$ is not algebraic over the center. Thus $\sigma_n\neq \sigma_m$ for each $n\neq m$. Consequently, by Theorem \ref{t3}$(2)$, for each $n\geq 1$, the set $S_n=\{(x,\sigma_n(x))\ |\ x\in R\}$ is a maximal subring of $R\times R$. Obviously, whenever $n\neq m$, we have $S_n\neq S_m$. It follows that $R\times R$ has infinitely many maximal subrings, which is absurd. Thus $R$ is a finite, as desired.
\end{proof}

By the previous results we have the following immediate results.

\begin{cor}\label{t6}
Let $D$ be a division ring. Then $D\times D$ has only finitely many maximal subrings if and only if $D$ is a finite field.
\end{cor}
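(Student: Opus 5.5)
This is a direct application of Proposition \ref{t5}, since a division ring is in particular a simple ring. By that proposition, $D\times D$ has only finitely many maximal subrings if and only if $D$ is finite. It therefore suffices to show that a finite division ring is a finite field, and conversely that a finite field is a finite division ring. The converse is trivial.

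For the forward direction, suppose $D\times D$ has only finitely many maximal subrings. Then $D$ is a simple ring, because a division ring has no nontrivial two-sided ideals. Proposition \ref{t5} then gives that $D$ is finite. Wedderburn's little theorem states that every finite division ring is commutative, so $D$ is a finite field.

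For the backward direction, suppose $D$ is a finite field. Then $D\times D$ is a finite ring, and a finite ring has only finitely many subrings, hence only finitely many maximal subrings. Alternatively, one can invoke the easy direction of Proposition \ref{t5}, or recall the commutative case \cite[Corollary 3.5]{azomn}.

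There is no real obstacle here. The only external ingredient is Wedderburn's theorem, and all of the substantive work, namely ruling out infinite noncommutative division rings via Proposition \ref{t1}, Theorem \ref{t3}, and the infinitely many distinct inner automorphisms $\sigma_n$, is already contained in the proof of Proposition \ref{t5}.
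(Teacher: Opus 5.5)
Your proposal is correct and matches the paper, which states this corollary as an immediate consequence of the preceding results without further proof. Concretely, it combines Proposition \ref{t5} (applicable because a division ring is simple) with Wedderburn's little theorem to identify a finite division ring as a finite field, exactly as you do.
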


For definition of existentially complete division rings see \cite{cohndr}.

\begin{prop}\label{t7}
Let $D$ be an infinite countable existentially complete division ring, then $D\times D$ has at least $2^{\aleph_0}$ maximal subrings.
\end{prop}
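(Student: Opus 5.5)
By Theorem \ref{t3}$(2)$, every pair $\sigma_1,\sigma_2\in\Aut(D)$ gives a maximal subring $S_{\sigma_1,\sigma_2}=\{(\sigma_1(x),\sigma_2(x))\mid x\in D\}$ of $D\times D$. So the plan is to produce $2^{\aleph_0}$ distinct subrings of this form. Since $D$ is countable, these are also all the maximal subrings of type $(2)$ up to counting.

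First I reduce to $\sigma_1=\mathrm{id}$. Indeed $S_{\sigma_1,\sigma_2}=S_{\mathrm{id},\sigma_2\sigma_1^{-1}}$, and $S_{\mathrm{id},\tau}$ is the graph of $\tau$. Hence $S_{\mathrm{id},\tau}=S_{\mathrm{id},\tau'}$ if and only if $\tau=\tau'$. It therefore suffices to show $|\Aut(D)|\geq 2^{\aleph_0}$.

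The core step is to use existential completeness to build $2^{\aleph_0}$ automorphisms by a back-and-forth construction. Enumerate $D=\{d_0,d_1,\ldots\}$. I build a binary tree of partial isomorphisms $f_s\colon A_s\to B_s$, indexed by finite $0/1$-strings $s$. Each $A_s,B_s$ is a finitely generated sub-division ring over the center $K$, the maps $f_s$ fix $K$, and $f_{s0},f_{s1}$ both extend $f_s$.

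The extension step goes as follows. Given a finitely generated $A$ and an element $d$, the embedding $f\colon A\to D$ extends to $A(d)$. To see this, take the division ring coproduct of $D$ and $A(d)$ over $A$ identified via $f$. This is an extension of $D$ in which the existential type of $d$ over $f(A)$ is realized. Existential completeness then yields a realization inside $D$, and one must choose it so that the resulting map is an isomorphism of the generated division rings. Surjectivity is obtained by alternating forth and back steps.

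For splitting, I need two incompatible choices at each node. Take a non-central $y\notin A_s$, for instance one whose centralizer condition or conjugacy class differs. The images $f(y)$ can be chosen as two distinct elements $z\neq z'$, e.g. $z$ and a conjugate $uzu^{-1}$ with $u$ commuting with $f(A_s)$; such a $u$ exists by existential completeness, since the centralizer of a finitely generated subring of an e.c. division ring is large. The unions along each branch $\beta\in\{0,1\}^{\mathbb{N}}$ are then automorphisms $\tau_\beta$. Distinct branches disagree at their splitting point, which gives $2^{\aleph_0}$ distinct automorphisms.

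The main obstacle is making the extension step rigorous: showing that any isomorphism between finitely generated sub-division rings of a countable e.c. division ring extends to include a prescribed new element, with at least two distinct choices of image. Existential completeness applies to finite systems of equations and inequations, whereas full types over $A$ involve infinitely many conditions. So I would first try to invoke the known homogeneity and centralizer results for e.c. division rings, such as those in \cite{cohndr}: finitely generated subfields are conjugate when isomorphic over $K$, and every isomorphism between finitely generated sub-division rings is induced by an inner automorphism. If inner automorphisms suffice, there is a simpler route. Consider the inner automorphisms $\iota_u$. They satisfy $\iota_u=\iota_v$ if and only if $u^{-1}v\in K$. Since $D$ is countable and $|D^{*}/K^{*}|\leq\aleph_0$, this gives only countably many automorphisms. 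So a genuine tree construction is needed, obtained by chaining inner automorphisms $\iota_{u_n}$ whose composites converge pointwise (stabilizing on each $d_i$) along $2^{\aleph_0}$ branches. This convergence argument, in which $u_{n+1}$ centralizes the current finite set, is the step I expect to require the most care.
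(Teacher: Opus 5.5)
Your reduction is exactly the paper's. By Theorem~\ref{t3}(2) the graphs $S_{\mathrm{id},\tau}$, $\tau\in\Aut(D)$, are pairwise distinct maximal subrings of $D\times D$, so everything rests on $|\Aut(D)|\geq 2^{\aleph_0}$. The paper takes this bound from \cite[Theorem 6.5.11]{cohndr}. You neither quote that result nor prove it, and that is the gap, since the bound is the entire content of the proposition. The specific problems are these:
\begin{itemize}
\item Your forth step does not work as stated. Existential completeness solves one finite system of equations and inequations at a time. But for $f$ to extend to an isomorphism $A(d)\to f(A)(d')$, the element $d'$ must satisfy every rational relation and non-relation of $d$ over $A$, which is infinitely many conditions. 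So ``choose it so that the resulting map is an isomorphism'' is precisely the unproved point, and the homogeneity result you would substitute is only named.
\item The splitting step needs some $u\in C_D(f(A_s))$ with $uz\neq zu$. ``The centralizer is large'' does not supply such a $u$.
\item The chaining argument at the end is stated as an intention, not carried out.
\end{itemize}

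To close the gap, either cite Cohn's theorem, as the paper does, or finish your last idea. It needs neither back-and-forth nor homogeneity.

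The only input from existential completeness is that for every finite $F\subseteq D$ some $v\in C_D(F)$ is not central in $D$. To see this, let $L$ be the Ore skew field of fractions of $D(u)[w;\sigma]$, where $u$ is a central indeterminate and $\sigma$ fixes $D$ with $\sigma(u)=u+1$. In $L$ the ground field $K$ stays central, $u$ and $w$ centralize $D$, and $uw-wu=-w\neq 0$. So the finite system $uf=fu$, $wf=fw$ $(f\in F)$, $(uw-wu)z=1$ is solvable in $L$, hence in $D$.

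Now enumerate $D=\{d_0,d_1,\dots\}$. Attach to each binary string $s$ of length $k$ an inner automorphism $g_s$ and a finite set $F_s$, starting with $g_\emptyset=\mathrm{id}$ and $F_\emptyset=\emptyset$, as follows:
\begin{itemize}
\item put $F_s'=F_s\cup\{d_k,g_s^{-1}(d_k)\}$;
\item pick $v_s\in C_D(F_s')$ non-central and $e_s$ with $v_se_s\neq e_sv_s$;
\item set $g_{s0}=g_s$, $g_{s1}=g_s\circ\iota_{v_s}$, and $F_{s0}=F_{s1}=F_s'\cup\{e_s\}$.
\end{itemize}
By induction, every $g_t$ with $t\supseteq s$ agrees with $g_s$ on $F_s'$. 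Hence along each branch $\beta$ the maps $g_{\beta|k}$ stabilize pointwise to a ring endomorphism $g_\beta$. It is onto, because $g_\beta(g_{\beta|k}^{-1}(d_k))=d_k$. Two branches separating at $s$ differ at $e_s$, since $g_s(e_s)\neq g_s(v_se_sv_s^{-1})$. This gives $2^{\aleph_0}$ distinct automorphisms and completes your argument.
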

\begin{proof}
By \cite[Theorem 6.5.11]{cohndr}, $D$ has at least $2^{\aleph_0}$ automorphisms. Hence by Theorem \ref{t3} for each $\sigma\in \Aut(D)$, $S_{\sigma}:=\{(x,\sigma(x))\ |\ x\in D\}$ is a maximal subring of $D\times D$. It is clear that $S_{\sigma}\neq S_{\sigma'}$, whenever $\sigma$ and $\sigma'$ are two distinct automorphisms of $D$. Hence $D\times D$ has at least $2^{\aleph_0}$ maximal subrings.
\end{proof}

\begin{cor}\label{t7a}
Let $T=R_1\times R_2$, where $R_1$ and $R_2$ be two simple ring. Then $T$ has infinitely many maximal subrings if and only if one of the following holds:
\begin{enumerate}
\item $R_1$ or $R_2$ has infinitely many maximal subrings.
\item $R_1\cong R_2$ are infinite rings.
\end{enumerate}
\end{cor}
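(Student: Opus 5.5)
Two cases, according to whether $R_1\cong R_2$ or not.

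\emph{Case $R_1\not\cong R_2$.} A simple ring has only $0$ as a proper ideal. So two simple rings are homomorphically non-isomorphic exactly when they are non-isomorphic, and Lemma \ref{pt8} applies. It says every maximal subring of $T$ is $A_1\times R_2$ with $A_1\in\RgMax(R_1)$, or $R_1\times B_1$ with $B_1\in\RgMax(R_2)$. Hence $|\RgMax(T)|=|\RgMax(R_1)|+|\RgMax(R_2)|$, which is infinite if and only if $(1)$ holds. Condition $(2)$ fails in this case, so the equivalence holds.

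\emph{Case $R_1\cong R_2$.} Write $R:=R_1\cong R_2$, so that $T\cong R\times R$.

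\emph{Forward direction.} Suppose $T$ has infinitely many maximal subrings. By Proposition \ref{t5}, $R$ is infinite, so $(2)$ holds.

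\emph{Reverse direction.} If $(2)$ holds, then $R$ is infinite and Proposition \ref{t5} gives that $R\times R$ has infinitely many maximal subrings. If $(1)$ holds, say $R_1$ has infinitely many maximal subrings $A$, then the subrings $A\times R_2$ are pairwise distinct maximal subrings of $T$, by Theorem \ref{t3}$(1)$, or directly. So $T$ has infinitely many maximal subrings.

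In this case the whole statement reduces to Proposition \ref{t5} and Theorem \ref{t3}. I do not expect a real obstacle in either case. The only point to check is that being non-isomorphic is the same as being homomorphically non-isomorphic for simple rings; this holds because the only proper quotient of a simple ring is the ring itself.
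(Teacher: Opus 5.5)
Your proof is correct and follows the same route as the paper: Lemma \ref{pt8} handles the non-isomorphic case, and Proposition \ref{t5}, together with the easy construction $A\times R_2$ for condition (1), handles the isomorphic case. Your explicit split into $R_1\cong R_2$ and $R_1\not\cong R_2$ is slightly cleaner than the paper's argument. The paper treats ``(2) fails'' as meaning ``$R_1\not\cong R_2$'', and so it silently skips the trivial case where $R_1\cong R_2$ are finite in the forward direction.
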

\begin{proof}
It is clear that if $(1)$ holds then $T$ has infinitely many maximal subrings. Now assume that $(2)$ holds, thus $T\cong R_1\times R_1$ and therefore by Proposition \ref{t5}, we conclude that $T$ has infinitely many maximal subrings. Conversely, assume that $T$ has infinitely many maximal subrings, we prove that $(1)$ or $(2)$ holds. Suppose that $(2)$ does not hold, i.e., $R_1$ and $R_2$ are not isomorphic as rings. Thus $R_1$ and $R_2$ are homomorphically non-isomorphic rings. Consequently, by Lemma \ref{pt8}, we infer that each maximal subrings of $T$ is of the form $S_1\times R_2$ or $R_1\times S_2$, where $S_i$ is a maximal subring of $R_i$. Since $T$ has infinitely many maximal subrings, we obtain that either $R_1$ or $R_2$ has infinitely many maximal subrings and hence we are done.
\end{proof}

For a ring $T$, let us denote the sets $\Maxl(T)\setminus \Max(T)$ by $\Maxl'(T)$, similarly $\Maxr'(T):=\Maxr(T)\setminus \Max(T)$ and $\Max'(T):=\Max(T)\setminus \Maxl(T)=\Max(T)\setminus \Maxr(T)$. Note that if $P$ is a prime ideal of a ring $T$ and $I$ be a left (resp. right) ideal and $J$ is an ideal of $T$ such that $P=I\cap J$, then either $P=I$ or $P=J$. To see this, note that  $JI\subseteq I\cap J\subseteq P$ (resp. $IJ\subseteq I\cap J\subseteq P$), therefore $I\subseteq P$ or $J\subseteq P$. Hence $P=I$ or $P=J$.

\begin{prop}\label{t8}
Let $T$ be a ring with only finitely many maximal subrings. Then the following hold:
\begin{enumerate}
\item $\Maxl'(T)$ and $\Maxr'(T)$ are finite sets and in fact $|\Maxl'(T)|\leq |\RgMax(T)|$ and $|\Maxr'(T)|\leq |\RgMax(T)|$.
\item For each $M\in \Maxl'(T) \cup \Maxr'(T)$, the ring $\mathbb{I}(M)/M$ is a finite field. In particular, $C(T)/(C(T)\cap M)$ is a finite field.
\item For each $M\in \Maxl(T)\cap \Max(T)=\Maxr(T)\cap \Max(T)$, the center of $T/M$ is algebraically closed in $T/M$.
\item For each $M\in \Max'(T)$, there exist a natural number $n>1$ and a finite field $F$ such that $T/M\cong\mathbb{M}_n(F)$.
\item $\Max'(T)$ is finite and in fact $|\Max'(T)|\leq |\RgMax(T)|$.
\item $\Prml(T)\setminus \Maxl(T)$, $\Prml(T)\setminus \Max(T)$, $\Prmr(T)\setminus \Maxr(T)$ and $\Prmr(T)\setminus \Max(T)$ are finite sets.
\item If $P$ a left (resp. right) primitive ideal of $T$, then either $P$ is a maximal ideal of $T$ or $P=\bigcap_{i\in I} M_i$, where $M_i\in \Maxl(T)\cap \Max(T)$, moreover in this case $I$ is infinite and $T/P$ is a domain (i.e., $P$ is a completely prime ideal of $T$). In particular, $T/P$ is embeddable in a division ring.
\item $J(T)=\bigcap_{M\in \Max(T)} M$.
\end{enumerate}
\end{prop}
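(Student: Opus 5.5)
The plan is to prove the eight items roughly in order, using three tools throughout. The first is the map $M\mapsto \mathbb{I}(M)$ from $\Maxl'(T)$ to $\RgMax(T)$, supplied by Theorem \ref{pt6}$(1)$ and $(2)$. The second is Theorem \ref{pt1}. The third is the observation that maximal subrings of a factor ring $T/M$ lift to distinct maximal subrings of $T$ containing $M$, so $T/M$ also has only finitely many maximal subrings.

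For $(1)$, I will show that $M\mapsto\mathbb{I}(M)$ is injective on $\Maxl'(T)$. Suppose $\mathbb{I}(M)=\mathbb{I}(N)=R$ with $M\neq N$. Then $M,N\subseteq R$ are ideals of $R$, so $T=M+N\subseteq R$, contradicting the fact that $R$ is proper. The right-hand case is symmetric.

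For $(2)$, Theorem \ref{pt1} gives $|\mathbb{I}(M)/M|+1\leq|\Maxl'(T)|<\infty$. Also $\mathbb{I}(M)/M\cong\End_T(T/M)^{\text{op}}$ is a division ring by Schur's lemma. Being finite, it is a field by Wedderburn's theorem. Since $C(T)\subseteq\mathbb{I}(M)$, the ring $C(T)/(C(T)\cap M)$ embeds in this finite field, hence is a finite domain and so a field.

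For $(3)$, if $M\in\Maxl(T)\cap\Max(T)$ then $T/M$ is a division ring with finitely many maximal subrings. If it is noncommutative, Proposition \ref{t1} gives the claim; if it is commutative the claim is trivial.

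For $(4)$, if $M\in\Max'(T)$ then $T/M$ is simple but not a division ring, and it has finitely many maximal subrings. Theorem \ref{t4} then gives $T/M\cong\mathbb{M}_n(F)$ with $F$ finite and $n>1$.

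For $(5)$, for each $M\in\Max'(T)$ I will pick a maximal left ideal $N_M\supseteq M$. It cannot be two-sided: a two-sided maximal ideal containing $M$ would equal $M$, but $M$ is not a maximal left ideal. Moreover $\lann_T(T/N_M)$ is a proper ideal containing $M$, hence equals $M$. So $M\mapsto N_M$ is an injection $\Max'(T)\to\Maxl'(T)$, and $(1)$ finishes the item.

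For $(6)$, take $P\in\Prml(T)\setminus\Maxl(T)$ and write $P=\lann_T(T/N)$ with $N\in\Maxl(T)$. Then $N\neq P$, so $N$ is not an ideal and $N\in\Maxl'(T)$. Since $P$ is determined by $N$, finiteness follows. By Remark \ref{pt5}, we have $\Prml(T)\setminus\Max(T)\subseteq\Prml(T)\setminus\Maxl(T)$. The right-hand versions are symmetric.

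For $(7)$, let $P$ be left primitive and not maximal. Since $J(T/P)=0$, $P$ is the intersection of the maximal left ideals containing it. Split these into the finitely many in $\Maxl'(T)$, whose intersection is a left ideal $I$, and those in $\Maxl(T)\cap\Max(T)$, whose intersection is an ideal $J$. By the prime-ideal remark before the proposition, $P=I$ or $P=J$. If $P=I$, then $T/P$ embeds in a finite direct sum of simple modules, so it is semisimple Artinian and prime, hence simple, and $P$ is maximal, which is a contradiction. So $P=\bigcap M_i$ with $M_i\in\Maxl(T)\cap\Max(T)$. The index set is infinite by the same argument, since a prime finite intersection of maximal ideals equals one of them. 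Each $T/M_i$ is a division ring, so $T/P$ is reduced. A reduced prime ring is a domain, since $ab=0$ gives $(bTa)^2=0$, hence $bTa=0$.

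The embedding of $T/P$ into a division ring is the step I expect to be the main obstacle. Being a subdirect product of division rings is not enough by itself. I would first try to exploit left primitivity together with the domain property: for instance, show $T/P$ is a left Ore domain using a faithful simple module, or else use the structure of the $T/M_i$ given by $(3)$. If that fails, I would look for an ultraproduct-type embedding of $T/P$ into $\prod T/M_i$ modulo a suitable filter.

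Finally, for $(8)$, every maximal ideal $M$ is left primitive, since $\lann_T(T/N)$ for a maximal left ideal $N\supseteq M$ equals $M$. Also, by $(7)$, every left primitive ideal is an intersection of maximal ideals. Hence $J(T)=\bigcap\Prml(T)=\bigcap_{M\in\Max(T)}M$.
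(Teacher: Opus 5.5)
Items (1)--(6), item (8), and the part of (7) up to ``$T/P$ is a domain'' are correct. Most of them follow the paper's proof closely.

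For (5) you take a slightly different route. You inject $\Max'(T)$ into $\Maxl'(T)$ by choosing a maximal left ideal $N_M\supseteq M$ and recovering $M=\lann_T(T/N_M)$. The paper instead uses (4) and Theorem \ref{pt6}(4) to lift a maximal subring of $T/M\cong\mathbb{M}_n(F)$, and separates these lifts by comaximality. Both routes give $|\Max'(T)|\le|\RgMax(T)|$, and yours has the advantage of not needing (4).

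\textbf{The gap.} The last sentence of (7), that $T/P$ embeds in a division ring, is not proved: you only list candidate strategies. Your remark that being a subdirect product of division rings ``is not enough by itself'' misses that it \emph{is} enough once you have the domain property you have just established.

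The Ore route is not promising. Left primitive domains need not be Ore; for example, free algebras on two generators are primitive and not Ore. Nothing in the hypotheses supplies the Ore condition.

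Your ultraproduct suggestion is the right idea, and it closes the gap in a few lines:
\begin{enumerate}
\item Put $D_i=T/M_i$. For $t\in T\setminus P$ let $S(t)=\{i\in I\mid t\notin M_i\}$; this set is nonempty because $P=\bigcap_i M_i$.
\item Each $D_i$ has no zero divisors, so $S(s)\cap S(t)=S(st)$.
\item Since $T/P$ is a domain, $st\notin P$ whenever $s,t\notin P$. Hence the family $\{S(t)\mid t\notin P\}$ is closed under finite intersections and does not contain $\emptyset$, so it extends to an ultrafilter $\mathcal{U}$ on $I$.
\item The induced map $T/P\to\prod_{i\in I}D_i/\mathcal{U}$ is injective, because $\{i\mid t\in M_i\}=I\setminus S(t)\notin\mathcal{U}$ for every $t\notin P$.
\item An ultraproduct of division rings is a division ring: an element that is nonzero modulo $\mathcal{U}$ has invertible coordinates on a set in $\mathcal{U}$, and you invert it there.
\end{enumerate}
At this point the paper simply cites a theorem of Cohn (Theorem 1.2.4 of his book on skew fields). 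The argument above is a self-contained substitute for that citation.
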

\begin{proof}
$(\bf 1)$ For each maximal left ideal $A$ of $T$ which is not an ideal of $T$, by Theorem \ref{pt6}, $\mathbb{I}(A)$ is a maximal subring of $T$ which contains $A$. It is clear that whenever $A\neq B$ are in $\Maxl'(T)$, then $A+B=T$ and consequently $\mathbb{I}(A)\neq\mathbb{I}(B)$. Therefore the function $A\longmapsto \mathbb{I}(A)$, from $\Maxl'(T)$ into $\RgMax(T)$ is one-one, which complete the proof of $(1)$ for $\Maxl(T)$. The proof for $\Maxr'(T)$ is analogous.\\
$(\bf 2)$ The first part is an immediate consequence of Theorem \ref{pt1} and $(1)$ (note that if $M\in \Maxl'(T)$, then $\mathbb{I}(M)/M\cong (\End({}_T(T/M)))^{op}$ is a division ring). For the last part of $(2)$, obviously $C(T)+M$ is a subring of $T$ which is contained in $\mathbb{I}(M)$. It follows that $(C(T)+M)/M$ is a subring of $\mathbb{I}(M)/M$. Now by the first part of $(2)$, we deduce that $(C(T)+M)/M$ is a finite field. A fortiori, $C(T)/(C(T)\cap M)$ is a finite field.\\
$(\bf 3)$ If $T/M$ is a field (in particular, if $T/M$ is finite), then the conclusion is obvious. Hence assume that $T/M$ is an infinite division ring. By Theorem \ref{t1}, we deduce that the center of $T/M$ is algebraically closed in $T/M$, because otherwise, $T/M$ and therefore $T$ have infinitely many maximal subrings, which is absurd.\\
$(\bf 4)$ This is an immediate consequence of Theorem \ref{t4} and the fact that $T/M$ has only finitely many maximal subrings (note that $T$ has only finitely many maximal subrings and thus $T/M$ has only finitely many maximal subrings).\\
$(\bf 5)$ Suppose that $\Max'(T)=\{M_i\ |\ i\in I\}$, where $M_i\neq M_j$ for $i\neq j\in I$. Then by $(4)$, for each $i\in I$ there exist a natural number $n_i>1$ and a finite field $F_i$ such that $T/M_i\cong\mathbb{M}_{n_i}(F_i)$. Therefore by Theorem \ref{pt6}$(4)$, we infer that for each $i\in I$, $M_i$ is contained in a maximal subring $S_i$ of $T$. Since for $i\neq j$, we have $M_i+M_j=T$, we deduce that $S_i\neq S_j$. Therefore $|I|\leq |\RgMax(T)|$ and hence we are done.\\
$(\bf 6)$ Let $\{P_i\ |\ i\in I \}$ be a family of distinct left primitive ideals of $T$ such that for any $i\in I$, $P_i$ is not a maximal (left) ideal of $T$. Since for each $i\in I$, $P_i$ is a left primitive ideal of $T$, we deduce that there exists a maximal left ideal $M_i$ of $T$ such that $P_i=\lann_T(T/M_i)$. It is clear that whenever $i\neq j$, then $M_i\neq M_j$, for $P_i\neq P_j$. Also note that if $M_i$ is an ideal of $T$, then $P_i=M_i$ is a maximal (left) ideal of $T$ which is absurd. Thus for each $i\in I$ we obtain that $M_i\in \Maxl'(T)$ and therefore by $(1)$ we conclude that $I$ is finite. It follows that $\Prml(T)\setminus \Maxl(T)$, $\Prml(T)\setminus \Max(T)$ are finite. The proof for $\Prmr(T)\setminus \Maxr(T)$ and $\Prmr(T)\setminus \Max(T)$ are analogous.\\
$(\bf 7)$ Let $P$ be a left primitive ideal of $T$. Hence $P$ is an intersection of a family of maximal left ideals of $T$. Since by $(1)$, $\Maxl'(T)$ is finite, we conclude that there exist $N_1,\ldots,N_k\in \Maxl'(T)$, $k\geq 0$, and a family $\{M_i\ |\ i\in I\}\subseteq \Maxl(T)\cap \Max(T)$ such that $P=N_1\cap\cdots N_k\cap N$, where $N=\bigcap_{i\in I}M_i$. By the comments preceding the proposition we infer that either $P=N_1\cap\cdots N_k$ or $P=\bigcap_{i\in I}M_i$. If $P=N_1\cap\cdots N_k$, then the ring $T/P$ embeds in the semisimple left $T$-module $T/N_1\times \cdots\times T/N_k$, as left $T$-modules. This implies that $T/P$ is a prime semisimple ring. Consequently $T/P$ is a simple ring, i.e., $P$ is a maximal ideal of $T$. Hence we may suppose that $P$ is not a maximal ideal of $T$ and $P=N=\bigcap_{i\in I}M_i$. Since $P$ is not maximal ideal of $T$, but is a prime ideal of $T$, we conclude that $I$ is infinite. Finally note that for each $i\in I$, the ring $T/M_i$ is a division ring. A fortiori, $\prod_{i\in I}T/M_i$ is a reduced ring. Since $T/P$ embeds in $\prod_{i\in I}T/M_i$, we deduce that $T/P$ is a reduced ring. Thus $T/P$ is a reduced prime ring and consequently $T/P$ is a domain. It follows by \cite[Theorem 1.2.4]{cohndr} that $T/P$ is embeddable in a division ring.\\
$(\bf 8)$ This is trivial by $(7)$ and the fact that $J(T)$ is the intersection of all left (resp. right) primitive ideals of $T$.
\end{proof}

The previous observations yield the following quick corollaries.

\begin{cor}
Let $T$ be an infinite left/right primitive ring which is not embeddable in a division ring, then $T$ has infinitely many maximal subrings.
\end{cor}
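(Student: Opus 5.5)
We argue by contraposition: assume $T$ is an infinite left primitive ring (the right case is symmetric) with only finitely many maximal subrings, and show that $T$ embeds in a division ring. Since $T$ is left primitive, $0$ is a left primitive ideal of $T$. By Proposition \ref{t8}$(7)$, applied with $P=0$, either $0$ is a maximal ideal of $T$, or $T=T/0$ is a domain embeddable in a division ring. The second alternative is exactly what we want, so only the first needs work.

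So suppose $0$ is a maximal ideal, i.e., $T$ is a simple ring. We split into two cases.

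\emph{Case 1: $T$ is a division ring.} Then $T$ trivially embeds in a division ring, namely itself.

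\emph{Case 2: $T$ is not a division ring.} By Theorem \ref{t4}, a simple ring that is not a division ring and has only finitely many maximal subrings is finite. This contradicts the assumption that $T$ is infinite.

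Alternatively, Case 2 follows from Proposition \ref{t8}$(4)$. Since $T$ is simple and not a division ring, its maximal left ideals are not two-sided, so $0\in\Max'(T)$. Hence $T\cong\mathbb{M}_n(F)$ with $F$ a finite field, again contradicting that $T$ is infinite.

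In every case, $T$ embeds in a division ring. This is the contrapositive of the statement, so an infinite left/right primitive ring not embeddable in a division ring has infinitely many maximal subrings.

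There is no real obstacle here. The only point to check is the reading of Proposition \ref{t8}$(7)$ when $P=0$: there $T/P=T$, so the alternative ``$T/P$ is embeddable in a division ring'' says exactly that $T$ embeds in a division ring.
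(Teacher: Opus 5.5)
Your proposal is correct and follows essentially the same route as the paper: both combine Proposition~\ref{t8}$(7)$, applied to the left primitive ideal $P=0$, with Theorem~\ref{t4} for the case where $T$ is simple. The only differences are that you apply Proposition~\ref{t8}$(7)$ before splitting into cases, and that you explicitly treat the simple division-ring subcase (where $T$ trivially embeds in itself), which the paper's appeal to Theorem~\ref{t4} leaves implicit.
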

\begin{proof}
If $T$ is a simple ring then we are done by Theorem \ref{t4}. Hence assume that $T$ is not a simple ring. If $T$ has only finitely many maximal subrings, then by Proposition \ref{t8}$(7)$, we conclude that $T$ is embeddable in a division ring which is absurd. Thus $T$ has infinitely many maximal subrings.
\end{proof}

\begin{cor}
Let $T$ be an infinite prime ring which is not embeddable in a division ring. If $J(T)=0$, then $T$ has infinitely many maximal subrings.
\end{cor}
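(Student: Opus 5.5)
Assume, for a contradiction, that $T$ has only finitely many maximal subrings. The aim is to show that $T$ is then a left primitive ring, so that the preceding corollary applies. Equivalently, one can run the argument of that corollary directly through Proposition \ref{t8}$(7)$.

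First I reduce to the case where $T$ is not simple. If $T$ is simple, then $T$ is not a division ring, since $T$ does not embed in a division ring. Theorem \ref{t4} then says $T$ is finite, which contradicts the hypothesis that $T$ is infinite.

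So assume $T$ is not simple. Because $J(T)=0$, we have $\bigcap P=0$ as $P$ runs over the left primitive ideals of $T$. By Proposition \ref{t8}$(6)$, only finitely many left primitive ideals fail to be maximal ideals, say $P_1,\ldots,P_k$; call the remaining ones (which are maximal ideals) $Q_j$.

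Each left primitive ideal is prime, and $T$ is a prime ring. A finite intersection of nonzero ideals in a prime ring is nonzero, since it contains their product, which is nonzero. Hence the finitely many $P_i$ alone cannot intersect to $0$ unless some $P_i=0$. The family $\{Q_j\}$ must be handled too. If $0=\bigcap_i P_i\cap\bigcap_j Q_j$ and every $P_i\neq 0$, then, writing $I=P_1\cdots P_k\neq 0$, we get $I\cdot\bigcap_j Q_j\subseteq 0$. Primeness then gives $\bigcap_j Q_j=0$.

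Now split into two cases.

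In the first case some $P_i=0$. Then $0$ is a left primitive, non-maximal ideal, so Proposition \ref{t8}$(7)$ says $T=T/0$ is a domain embeddable in a division ring, which is a contradiction.

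In the second case $\bigcap_j Q_j=0$, where each $Q_j$ is a maximal ideal. Here Proposition \ref{t8}$(2)$,$(4)$ and $(7)$ give finer information. The $Q_j$ in $\Max'(T)$ are finitely many, and each has $T/Q_j$ finite matrix-like. The remaining $Q_j$ lie in $\Maxl(T)\cap\Max(T)$, so their quotients are division rings.

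Intersecting the finitely many $Q_j\in\Max'(T)$ gives a nonzero ideal $I'$ unless one of them is $0$. If one is $0$, then $T\cong\mathbb{M}_n(F)$ is finite, a contradiction. Otherwise primeness forces $N:=\bigcap Q_j=0$, where this intersection runs over $Q_j\in\Maxl(T)\cap\Max(T)$. Then $T$ embeds in $\prod_j T/Q_j$, a product of division rings, so $T$ is reduced. Being reduced and prime, $T$ is a domain, and by \cite[Theorem 1.2.4]{cohndr} (as in the proof of Proposition \ref{t8}$(7)$) it embeds in a division ring. This is again a contradiction.

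The main obstacle is the primeness bookkeeping needed to eliminate the finitely many "bad" primitive ideals. I expect it to go through because a prime ring has no nonzero ideals with zero product. The point to verify carefully is that the set of $Q_j$ might be empty. In that situation the intersection over the $P_i$ alone is $0$, and we are back in the first case.
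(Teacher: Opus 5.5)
Your argument is correct and is essentially the paper's proof. The paper goes straight to $0=J(T)=\bigcap_{M\in\Max(T)}M$ via Proposition~\ref{t8}(8) and splits off the finitely many ideals in $\Max'(T)$ by (5). Primeness then forces either $T$ simple, which Theorem~\ref{t4} excludes, or $T$ a subdirect product of division rings, hence a domain embeddable in a division ring. Your detour through left primitive ideals and parts (6),(7) just re-derives (8) inline, and your opening aim of showing $T$ is left primitive is not what the argument actually does, though nothing depends on it.
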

\begin{proof}
Assume that $T$ has only finitely many maximal subrings, then by $(8)$ and $(5)$ of Proposition \ref{t8}, we conclude that $0=J(T)=Q_1\cap\cdots\cap Q_n\cap N$, where $Q_i$ are maximal ideals of $T$ which are not maximal left ideals, $n\geq 0$ and $N=\bigcap_{i\in I} M_i$, where $\Max(T)\cap \Maxl(T)=\{M_i\ |\ i\in I\}$. Since $T$ is a prime ring, we infer that either there exists $k$ such that $Q_k=0$, i.e., $T$ is an infinite simple ring, or $N=0$. If $T$ is an infinite simple ring, then by hypothesise and Theorem \ref{t4}, we deduce that $T$ has infinitely many maximal subrings which is absurd. Thus assume that $N=0$. Therefore $T$ embeds in $\prod_{i\in I}T/M_i$, and similar to the proof of Proposition \ref{t8}$(7)$, we conclude that $T$ is embeddable in a division ring which is impossible. Thus $T$ has infinitely many maximal subrings.
\end{proof}

We need the following main lemma for more conclusions.

\begin{lem}\label{t9}
Let $T$ be an infinite ring and $\bigcap_{i\in I} M_i=0$ where $\{M_i\ |\ i\in I\}\subseteq \Maxl(T)\cap \Max(T)$. Then either $T$ has infinitely many maximal subring or $C(T)$ is integrally closed in $T$. In particular, if $T$ is a (left) quasi-duo $J$-semisimple ring, then either $T$ has infinitely many maximal subring or $C(T)$ is integrally closed in $T$.
\end{lem}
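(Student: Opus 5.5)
The idea is to push a hypothetical non-central integral element down to one of the division rings $T/M_i$ and then apply Proposition \ref{t1} there. Assume that $C:=C(T)$ is not integrally closed in $T$, and fix $x\in T\setminus C$ that is integral over $C$. Say $x^n+c_{n-1}x^{n-1}+\cdots+c_0=0$ with $c_j\in C$. (Since $C$ is central, left and right integrality coincide.) I will show that $T$ has infinitely many maximal subrings.

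First I choose a suitable index. Since $x\notin C$, there is $y\in T$ with $xy-yx\neq 0$. Because $\bigcap_{i\in I}M_i=0$, some $M_k$ does not contain $xy-yx$. Each $M_i$ is both a maximal left ideal and a two-sided ideal, so $D:=T/M_k$ is a ring whose only left ideals are $0$ and $D$; hence $D$ is a division ring. Writing $\bar{t}=t+M_k$, we have $\bar x\bar y\neq \bar y\bar x$. Thus $\bar x$ is not central in $D$, and in particular $D$ is noncommutative. By Wedderburn's little theorem $D$ is then infinite, although we only need that it is noncommutative.

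Next I show that $\bar x$ is algebraic over the center of $D$. The map $T\to D$ is surjective, so the image $\bar C$ of $C$ lies in $C_D(D)$. Reducing the monic equation modulo $M_k$ gives $\bar x^n+\bar c_{n-1}\bar x^{n-1}+\cdots+\bar c_0=0$. Hence $\bar x$ is a noncentral element of $D$ that is algebraic over $C_D(D)$. By Proposition \ref{t1} (the consequence that, in a noncommutative division ring with only finitely many maximal subrings, $C_D(D)$ is algebraically closed in $D$), $D$ has infinitely many maximal subrings. The preimages in $T$ of distinct maximal subrings of $D=T/M_k$ are distinct maximal subrings of $T$ containing $M_k$. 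Therefore $T$ has infinitely many maximal subrings, which gives the first assertion.

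For the particular case, suppose $T$ is left quasi-duo and $J$-semisimple. Then every maximal left ideal is two-sided, so $\Maxl(T)\subseteq\Max(T)$. Also $0=J(T)=\bigcap_{M\in\Maxl(T)}M$, so the hypothesis of the first part holds with $\{M_i\}=\Maxl(T)$. I expect no step to be a real obstacle. The only points to check carefully are that $\bar C\subseteq C_D(D)$ (which uses surjectivity of $T\to D$) and that the chosen quotient is noncommutative, which is exactly why $k$ is picked with $xy-yx\notin M_k$. The hypothesis that $T$ is infinite appears not to be needed in this argument.
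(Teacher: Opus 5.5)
Your proposal is correct and is essentially the paper's own argument, written contrapositively. You pick $y$ with $xy-yx\neq 0$ and choose $M_k$ not containing this commutator, so that $x+M_k$ is a noncentral element of the division ring $T/M_k$ that is integral over its center; then Proposition \ref{t1} applies, and the maximal subrings of $T/M_k$ lift to $T$. Your remark that the hypothesis that $T$ is infinite is never used is also right, since the paper's proof does not use it either.
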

\begin{proof}
Assume that $T$ has only finitely many maximal subrings. Consequently for any $i\in I$, the division ring $T/M_i$ has only finitely many maximal subrings. Hence by Theorem \ref{t1}, for each $i\in I$, the center of $T/M_i$ is integrally closed in $T/M_i$, for otherwise if there exists $i\in I$ such that $T/M_i$ has a non-central element which is integral over the center of $T/M_i$, then $T/M_i$ has infinitely many maximal subrings, which is absurd. Now assume that $\alpha$ is a non-central element of $T$ which is integral over the center of $T$. Thus there exists $\beta\in T$ such that $\alpha\beta-\beta\alpha\neq 0$. Since  $\bigcap_{i\in I} M_i=0$, we infer that there exists $i\in I$ such that $\alpha\beta-\beta\alpha\notin M_i$. Therefore $\alpha+M$ is a non-central element of the division ring $T/M_i$. Clearly, $\alpha+M_i$ is integral over the center of $T/M_i$, which is impossible. Thus, the center of $T$ is integrally closed in $T$. The last part is evident.
\end{proof}

Now we have the following main result.

\begin{thm}\label{t11}
Let $T$ be a $K$-algebra over an infinite field $K$. Then either $T$ has infinitely many maximal subrings (in fact, $|\RgMax(T)|\geq |K|$) or the following hold:
\begin{enumerate}
\item $T$ is a quasi-duo ring. In particular, $\Max(T)=\Maxl(T)=\Maxr(T)=\Prml(T)=\Prmr(T)$.
\item For each $M\in \Max(T)$, the center of $T/M$ is integrally closed in $T/M$.
\item $J(T)=\bigcap_{M\in \Max(T)}M$ and $T/J(T)$ is a reduced ring. In particular, $N(T)\subseteq J(T)$. Moreover, the center of $T/J(T)$ is integrally closed in $T/J(T)$.
\end{enumerate}
\end{thm}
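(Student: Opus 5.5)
We assume $|\RgMax(T)|<|K|$ and derive (1)--(3); since $K$ is infinite, this covers the case where $T$ has only finitely many maximal subrings, and it gives the cardinal bound $|\RgMax(T)|\geq |K|$ in the other alternative.

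For (1), suppose $T$ is not left quasi-duo. Corollary \ref{pt2} then gives $|\Maxl'(T)|\geq |K|+1$. The proof of Proposition \ref{t8}(1) shows that the map $A\longmapsto \mathbb{I}(A)$ from $\Maxl'(T)$ into $\RgMax(T)$ is injective, using Theorem \ref{pt6}(1) and the fact that $A+B=T$ for $A\neq B$. Hence $|\RgMax(T)|\geq |K|$, a contradiction. The same argument on the right side shows that $T$ is quasi-duo. So every maximal left or right ideal is two-sided, and is therefore maximal as a two-sided ideal: $\Maxl(T)=\Maxl(T)\cap\Max(T)$, and similarly on the right.

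To get $\Max(T)=\Maxl(T)$ we must exclude $\Max'(T)\neq\emptyset$. If $M\in\Max'(T)$, then $T/M$ is a simple $K$-algebra that is not a division ring, since $M$ is not a maximal left ideal. Then $T/M$ has a maximal left ideal that is not two-sided, because its maximal left ideals cannot be nonzero ideals of a simple ring. This contradicts the quasi-duo property, since that left ideal pulls back to a maximal left ideal of $T$ containing $M$ that is not two-sided. For primitive ideals, any $P\in\Prml(T)$ is $\lann_T(T/N)$ with $N$ a maximal left ideal. Since $N$ is two-sided, $\lann_T(T/N)=N$, so $P=N\in\Max(T)$. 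Conversely every $M\in\Max(T)=\Maxl(T)$ is left primitive. The same holds on the right. This gives the chain of equalities in (1).

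For (2), take $M\in\Max(T)$. Since $M$ is a maximal left ideal and an ideal, $T/M$ is a division ring. If some non-central element of $T/M$ were integral over the center, Proposition \ref{t1} would give infinitely many maximal subrings of $T/M$, and these lift to $T$. To reach the bound $|K|$ rather than just infinitely many, I expect to need the conjugation argument behind Proposition \ref{t1} (via \cite[Theorem 3]{faithcd}) with conjugates $xRx^{-1}$ indexed through $K$; that cardinal refinement is the delicate point, and I would first try to show the conjugates can be parametrized by $K$. Alternatively, I would settle for the finite/infinite dichotomy in (2) and (3) and reserve the cardinal bound for (1).

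For (3), $J(T)$ is the intersection of the left primitive ideals, which by (1) equals $\bigcap_{M\in\Max(T)}M$. Each $T/M$ is a division ring, so $T/J(T)$ embeds in $\prod T/M$ and is reduced. It follows that every nilpotent element lies in $J(T)$, i.e.\ $N(T)\subseteq J(T)$. Finally, $T/J(T)$ is infinite because it contains a copy of $K$, and it is quasi-duo and $J$-semisimple. Its maximal subrings lift to maximal subrings of $T$, so Lemma \ref{t9} shows that $C(T/J(T))$ is integrally closed in $T/J(T)$.
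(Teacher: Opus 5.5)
Your proposal is correct and follows the paper's proof step for step. Part (1) uses Corollary \ref{pt2} together with the injection $A\mapsto\mathbb{I}(A)$ from Proposition \ref{t8}(1), part (2) uses Proposition \ref{t1}, and part (3) uses the embedding $T/J(T)\hookrightarrow\prod_{M\in\Max(T)}T/M$ together with Lemma \ref{t9}. Your fallback is exactly what the paper's own argument delivers: the bound $|\RgMax(T)|\geq|K|$ only for (1), and the finite/infinite dichotomy for (2)--(3). Your explicit checks that $\Max'(T)=\emptyset$ and $\Prml(T)=\Max(T)$, and your remark that the injectivity in Proposition \ref{t8}(1) does not need its finiteness hypothesis, fill in points the paper leaves implicit.
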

\begin{proof}
If $T$ has a maximal left ideal $M$, which is not an ideal of $T$, then by Corollary \ref{pt2}, we deduce that $|K|\leq |\Maxl(T)\setminus \Max(T)|$. Thus by Proposition \ref{t8}$(1)$, we conclude that $|K|\leq |\RgMax(T)|$. Hence if $|\RgMax(T)|<|K|$ we obtain that each maximal left ideal of $T$ is an ideal of $T$, i.e., $T$ is a left quasi-duo ring, and by an analogous proof, $T$ is a right quasi-duo ring. Consequently $T$ is a quasi-duo ring. Thus $(1)$ holds. $(2)$ is an immediate consequence of Proposition \ref{t1}. For $(3)$, by $(1)$ we infer that $J(T)$ is the intersection of all maximal ideals of $T$ and therefore $T/J(T)$ embeds in the reduce ring $\prod_{M\in \Max(T)} T/M$ (note that $T$ is a quasi-duo ring and therefore $T/M$ is a division ring for each $M\in \Max(T)$). Hence $T/J(T)$ is a reduced ring and thus $N(T)\subseteq J(T)$. The last part of $(3)$ is obvious by Lemma \ref{t9} (note, $T/J(T)$ is an infinite ring, because it contains a copy of the infinite field $K$).
\end{proof}

\begin{cor}\label{t12}
Let $T$ be a prime $K$-algebra over an infinite field $K$ with $J(T)=0$, then either $|\RgMax(T)|\geq |K|$ or $T$ embeds in a division ring; in particular, $T$ is a domain.
\end{cor}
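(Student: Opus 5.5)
Assume $|\RgMax(T)|<|K|$; we aim to show that $T$ embeds in a division ring. By Theorem \ref{t11}, $T$ is then a quasi-duo ring, so $\Max(T)=\Maxl(T)=\Maxr(T)$ and $T/M$ is a division ring for every $M\in\Max(T)$. Theorem \ref{t11}$(3)$ also gives $J(T)=\bigcap_{M\in\Max(T)}M$. Since $J(T)=0$ by hypothesis, $\bigcap_{M\in\Max(T)}M=0$, and $T$ embeds in the product $\prod_{M\in\Max(T)}T/M$ of division rings. In particular $T$ is reduced; this is also the ``$T/J(T)$ reduced'' part of Theorem \ref{t11}$(3)$.

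Next I use primeness. A reduced prime ring is a domain: if $ab=0$, then $(bTa)^2\subseteq bTabTa$. Indeed $(bta)(bsa)$ satisfies $(atb)^2=at(ba)tb$ up to rearrangement, and in a reduced ring $ab=0$ implies $ba=0$ and $aTb=0$. The standard fact is that in a reduced ring $ab=0$ forces $atb=0$ for all $t\in T$: one has $(bta)^2=bt(ab)ta=0$, so $bta=0$, and then $(atb)^2=at(bat)b=0$. Thus $aTb=0$, and primeness gives $a=0$ or $b=0$. This is exactly the argument already used in the proof of Proposition \ref{t8}$(7)$, so I would simply cite it.

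Finally, a domain that is a subring of a product of division rings embeds in a division ring by \cite[Theorem 1.2.4]{cohndr}, as quoted in the proof of Proposition \ref{t8}$(7)$. This gives the desired dichotomy. The only delicate point is the cardinality bookkeeping: Theorem \ref{t11} is stated with the bound $|\RgMax(T)|\geq|K|$ in its first alternative, so the negation really does yield the quasi-duo conclusion. Alternatively, one can argue directly: if $T$ is not left quasi-duo, Corollary \ref{pt2} and Proposition \ref{t8}$(1)$'s injection $\Maxl'(T)\to\RgMax(T)$ give $|\RgMax(T)|\geq|K|$. The injection in Proposition \ref{t8}$(1)$ does not actually use finiteness of $\RgMax(T)$, so this direct argument is valid. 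I expect no real obstacle beyond confirming that this cardinal version is legitimate.
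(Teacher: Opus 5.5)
Your proposal is correct and is essentially the paper's proof. Assuming $|\RgMax(T)|<|K|$, Theorem \ref{t11} makes $T$ quasi-duo, and as you rightly note this part only uses Corollary \ref{pt2} together with the injection $\Maxl'(T)\to\RgMax(T)$ from Proposition \ref{t8}$(1)$, not finiteness. Hence $J(T)=0$ embeds $T$ in $\prod_{M\in\Max(T)}T/M$, so $T$ is a reduced prime ring and therefore a domain, and Cohn's Theorem 1.2.4 then gives the embedding into a division ring.

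Your phrasing (a domain sitting inside a product of division rings) is more accurate than the paper's remark that $T$ itself is strongly regular. Just drop your garbled first attempt at ``reduced prime implies domain'' and keep the $(bta)^2=bt(ab)ta$, $(atb)^2=at(bat)b$ computation.
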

\begin{proof}
Assume that $|\RgMax(T)|<|K|$, then by the proof of the previous theorem $T$ embeds in the reduced ring $R:=\prod_{M\in \Max(T)} T/M$. Thus $T$ is a reduce prime ring. Therefore $T$ is a domain. Since for each maximal ideal $M$ of $T$, $T/M$ is a division ring, we infer that $T$ is a strongly von Neumann regular ring. Hence by \cite[Theorem 1.2.4]{cohndr}, we conclude that $T$ is embeddable in a division ring, as desired.
\end{proof}

If $R$ is a commutative ring and $M$ is a maximal ideal of $R$ such that $\Char(R/M)=0$, then $R/M$ is a field with infinitely many maximal subrings, by \cite[Corollary 1.5]{azomn}. Consequently, $R$ itself has infinitely many maximal subrings. For noncommutative rings we obtain the following analogue.

\begin{cor}\label{t13}
Let $T$ be a ring with only finitely many maximal subrings. Then the following hold:
\begin{enumerate}
\item For each $M\in \Maxl'(T)\cup \Maxr'(T)$, there exists a prime number $p$ such that $pT\subseteq M$.
\item For each $M\in \Max'(T)$, there exists a prime number $p$ such that $pT\subseteq M$.
\item If $P$ is a left/right primitive ideal of $T$, then either $T/P$ is a division ring or $\Char(T/P)$ is a prime number.
\end{enumerate}
\end{cor}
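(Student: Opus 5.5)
For (1), take $M\in \Maxl'(T)$; the case $M\in\Maxr'(T)$ is symmetric. By Proposition \ref{t8}$(2)$, $\mathbb{I}(M)/M$ is a finite field, say of characteristic $p$. The prime subring of $T$ lies in $C(T)\subseteq \mathbb{I}(M)$. Since $p\cdot 1_T$ maps to $0$ in $\mathbb{I}(M)/M$, we get $p\cdot 1_T\in M$. Because $p\cdot 1_T$ is central, $pT=T(p\cdot 1_T)\subseteq M$, as $M$ is a left ideal. Alternatively, Proposition \ref{t8}$(2)$ states directly that $C(T)/(C(T)\cap M)$ is a finite field, which gives the same conclusion.

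For (2), take $M\in\Max'(T)$. By Proposition \ref{t8}$(4)$, $T/M\cong \mathbb{M}_n(F)$ with $F$ a finite field, say $\Char(F)=p$. Then $p\cdot 1_{T/M}=0$, so $p\cdot 1_T\in M$. Since $M$ is an ideal, $pT\subseteq M$.

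For (3), let $P$ be a left primitive ideal; the right case is analogous. Apply Proposition \ref{t8}$(7)$. If $P$ is a maximal ideal, there are two subcases. If $P\in\Maxl(T)$, then $T/P$ is a simple ring whose only proper left ideal is $0$, so it is a division ring. Otherwise $P\in\Max'(T)$, and by (2), $\Char(T/P)$ is a prime. If $P$ is not a maximal ideal, then $P=\bigcap_{i\in I}M_i$ with $M_i\in \Maxl(T)\cap\Max(T)$, and $T/P$ is a domain. A domain has characteristic $0$ or a prime, since $mn\cdot 1=0$ forces $m\cdot 1=0$ or $n\cdot 1=0$. So the only remaining case is $\Char(T/P)=0$, which we must rule out or absorb into the dichotomy.

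That case is the main obstacle, since a domain of characteristic $0$ need not be a division ring. The plan is to rule it out. Each $T/M_i$ is a division ring with only finitely many maximal subrings, and its center $Z_i$ is a field. A field of characteristic $0$ has infinitely many maximal subrings, by the result quoted from \cite{azomn}. Hence if $\Char(T/M_i)=0$, I would aim to pass to infinitely many maximal subrings of $T/M_i$, for instance by lifting maximal subrings of $Z_i$ to $T/M_i$ via Proposition \ref{t1} or \cite{azdiv}. If this lifting fails, I would weaken the target and accept $\Char(T/M_i)=p_i$ prime only when it is available.

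Granting that each $\Char(T/M_i)=p_i$ is prime, $T/P$ embeds in $\prod_i T/M_i$, so if $\Char(T/P)=0$ the primes $p_i$ are unbounded. Now $p_i\cdot 1\notin P$ for every $i$, because $T/P$ has characteristic $0$. I would then use that $P$ is completely prime together with the fact that $\mathbb{Z}\cdot 1$ is central in $T/P$. The idea is that infinitely many distinct primes occurring as characteristics of the $T/M_i$ should yield infinitely many distinct maximal subrings of $T$, for example $\mathbb{Z}\cdot 1+M_i$-type subrings extended via Zorn's lemma, which contradicts finiteness. This last step is the one I am least sure of. If it cannot be completed, the fallback is to read (3) as following from (2) and the maximal case alone, under the division-ring alternative.
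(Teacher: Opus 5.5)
Your parts (1) and (2) are correct and match the paper's argument.

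Part (3) has a genuine gap. You route the argument through Proposition \ref{t8}$(7)$, which leaves the case where $P$ is not a maximal ideal and $T/P$ is a domain, and you never exclude $\Char(T/P)=0$. Neither proposed repair works as stated:
\begin{itemize}
\item No result in the paper (Proposition \ref{t1} included) lets you pass from maximal subrings of the center $Z_i$ to infinitely many maximal subrings of the division ring $T/M_i$.
\item Even if every $T/M_i$ had prime characteristic $p_i$, you give no reason why unboundedly many primes yield distinct maximal subrings of $T$. A subring such as $\mathbb{Z}\cdot 1+M_i$ can already equal $T$ (e.g.\ when $T/M_i\cong\mathbb{Z}_{p_i}$). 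Moreover, the Zorn argument needs something like $T=R[\alpha]$ for a proper subring $R$, which you have not arranged.
\item The ``fallback'' simply drops the case the statement is about.
\end{itemize}

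The missing idea is to use the definition of a left primitive ideal directly. The decomposition in Proposition \ref{t8}$(7)$ discards the faithful simple module, which is exactly the information you need. Write $P=\lann_T(T/M)$ with $M\in\Maxl(T)$.
\begin{itemize}
\item If $M$ is two-sided, then $\lann_T(T/M)=M$. So $P$ is a two-sided ideal that is a maximal left ideal, and $T/P$ is a division ring.
\item Otherwise $M\in\Maxl'(T)$, and your part (1) gives a prime $p$ with $pT\subseteq M$. Thus $p\cdot 1_T$ annihilates $T/M$, so $p\cdot 1_T\in P$ and $\Char(T/P)=p$.
\end{itemize}
The right primitive case is symmetric, using $\Maxr'(T)$. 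This short argument, which is the paper's, covers both your $\Max'(T)$ subcase and the non-maximal case at once. The characteristic-zero domain case therefore never arises.
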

\begin{proof}
$(1)$ Let $M$ be a maximal left/right ideal of $T$ which is not an ideal of $T$. Since $T$ has only finitely many maximal subring, by Proposition \ref{t8}$(2)$, we conclude that $\mathbb{I}(M)/M$ is a finite field. Hence $\Char(\mathbb{I}(M)/M)=p$, where $p$ is a prime number. Thus $p.1\in M$, because $1\in\mathbb{I}(M)$. Therefore $pT\subseteq M$.\\
$(2)$ The proof is similar to $(1)$ and by the use of Proposition \ref{t8}$(4)$.\\
$(3)$ Let $P$ be a left primitive ideal of $T$, which is not a left/right maximal ideal of $T$. Hence there exists a maximal left ideal $M$ of $T$ such that $P=\lann_T(T/M)$. If $M$ is an ideal of $T$, then $M=\lann_T(T/M)=P$, which is absurd. Thus $M$ is not an ideal of $T$ and therefore by $(1)$, we deduce that there exists a prime number $p$ such that $p(T/M)=0$. Hence $p\in P$ and thus $\Char(T/P)=p$.
\end{proof}

By the previous result, if $T$ is a ring of characteristic zero and there exists a maximal left/right ideal $M$ of $T$ that is not an ideal and moreover either $M\cap \mathbb{Z}=0$ or $T/M$ is a torsion-free/faithful $\mathbb{Z}$-module, then $T$ has infinitely many maximal subrings.

\begin{prop}\label{t14}
Let $T$ be an infinite ring with nonzero characteristic. If $\Max_F(T):=\{M\in \Max(T)\ |\ T/M \ \text{is}\ \text{finite}\}$ is infinite, then $T$ has infinitely many maximal subrings.
\end{prop}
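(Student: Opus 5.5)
Assume, for a contradiction, that $T$ has only finitely many maximal subrings and that $\Max_F(T)$ is infinite. Let $n=\Char(T)>0$. For each $M\in\Max_F(T)$ the ring $T/M$ is a finite simple ring, so $T/M\cong\mathbb{M}_{k}(F)$ for some finite field $F$ and some $k\geq 1$. If infinitely many $M\in\Max_F(T)$ have $k\geq 2$, then infinitely many lie in $\Max'(T)$, which contradicts Proposition \ref{t8}$(5)$. So, after discarding finitely many, we may assume that $T/M$ is a finite field $F_M$ for every $M$ in an infinite subset $\mathcal{M}\subseteq\Max_F(T)$. Since $n\cdot 1\in M$, each $F_M$ has characteristic a prime $p$ dividing $n$. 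Passing to an infinite subfamily, we may assume every $F_M$ has the same characteristic $p$.

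The aim is to build infinitely many distinct maximal subrings. For each $M\in\mathcal{M}$, the finite field $F_M=\mathbb{F}_{p^{d_M}}$ has a maximal subring when $d_M>1$, namely a maximal subfield of $F_M$. Its preimage in $T$ is then a maximal subring of $T$ containing $M$. Different $M$ give different preimages: two such subrings containing distinct maximal ideals $M\neq M'$ would contain $M+M'=T$. This settles the case where infinitely many $d_M>1$.

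The remaining, and main, obstacle is the case where $T/M\cong\mathbb{Z}_p$ for infinitely many $M$, since $\mathbb{Z}_p$ has no maximal subrings. Here I would pair ideals as in the commutative argument for $K\times K$ (recalled as fact $(7)$ in the introduction). Take distinct $M_1,M_2$ with $T/M_1\cong T/M_2\cong\mathbb{Z}_p$. Because $M_1+M_2=T$, the Chinese remainder theorem gives $T/(M_1\cap M_2)\cong\mathbb{Z}_p\times\mathbb{Z}_p$. The diagonal $\Delta$ is a maximal subring of $\mathbb{Z}_p\times\mathbb{Z}_p$, since $\mathbb{Z}_p$ is simple (\cite[Theorem 2.7]{azq}). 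Its preimage
\[
S_{M_1,M_2}=\{t\in T \mid t-m\cdot 1\in M_1\cap M_2 \text{ for some } m\in\mathbb{Z}\}
\]
is therefore a maximal subring of $T$. Now choose pairwise disjoint pairs $\{M_{2i-1},M_{2i}\}$, $i\geq 1$, from the infinitely many such ideals. The maximal ideals of $S_{M_1,M_2}$ that contain $M_1\cap M_2$ correspond to the maximal ideals of $\mathbb{Z}_p$, so there is only one, namely $S_{M_1,M_2}\cap M_1=S_{M_1,M_2}\cap M_2$. If $S_{M_1,M_2}=S_{M_3,M_4}$ with $\{M_1,M_2\}\ne\{M_3,M_4\}$, then $M_3\cap M_4\subseteq S_{M_1,M_2}$. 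Combining this with $M_1\cap M_2\subseteq S_{M_1,M_2}$, I expect to derive that the subring contains elements projecting onto everything in $T/(M_1\cap M_2)$, contradicting properness. Concretely, CRT gives an element in $M_3\cap M_4$ that is $\equiv 1 \pmod{M_1}$ and $\equiv 0 \pmod{M_2}$, which is not in the diagonal. Hence the $S_{M_{2i-1},M_{2i}}$ are pairwise distinct, giving infinitely many maximal subrings, a contradiction.

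The same CRT idea shows that the finite-field restriction is only for convenience. In general one can pair $M_1,M_2$ with $T/M_1\cong T/M_2$ (infinitely many ideals with quotients of bounded size, or else use the unbounded-degree case above) and take the preimage of a twisted diagonal from Theorem \ref{t3}$(2)$.
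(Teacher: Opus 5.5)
Your proposal is correct and follows the paper's proof essentially step for step. You reduce to a single prime $p$, dispose of infinitely many matrix quotients (you via Proposition~\ref{t8}$(5)$, the paper directly via Theorem~\ref{pt6}$(4)$, which is the same argument), handle proper finite-field quotients via maximal subfields, and finally pair ideals with quotient $\mathbb{Z}_p$ and pull back the diagonal of $\mathbb{Z}_p\times\mathbb{Z}_p$. The only cosmetic difference is in distinguishing the paired subrings: your explicit Chinese-remainder element works, while the paper simply notes $(P_i\cap Q_i)+(P_j\cap Q_j)=T$, so two equal such subrings would contain $T$.
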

\begin{proof}
Since $n$ has only finitely many prime divisors and for each $M\in \Max_F(T)$, $\Char(T/M)$ is a prime divisor of $n$, we conclude that the exists a prime divisor $p$ of $n$ such that $\Max_{p,F}(T):=\{M\in \Max_F(T)\ |\ \Char(T/M)=p\}$ is infinite. Also note that for each $M\in \Max_{p,F}(T)$, the ring $T/M$ is a finite simple ring, hence we deduce that there exist a natural number $m$ and a finite field $E$ such that $T/M\cong \mathbb{M}_m(E)$. Now we have two cases:\\
$(\bf 1)$ There exist distinct $M_1, M_2,\ldots \in \Max_{p,F}(T)$ such that $T/M_i\cong\mathbb{M}_{m_i}(E_i)$, where $m_i>1$ and $E_i$ is a field. Thus in this case for each $i$, $T$ has a maximal subring $R_i$ which contains $M_i$, for $\mathbb{M}_{m_i}(E_i)$ has a maximal subring by Theorem \ref{pt6}$(4)$. Since for $i\neq j$, $M_i+M_j=T$, we deduce that $R_i\neq R_j$. Hence $R_1, R_2,\ldots$ are distinct maximal subrings of $T$, as desired.\\
$(\bf 2)$ Thus assume that there exist infinitely many distinct maximal ideals $M_1, M_2, \ldots\in \Max_{p,F}(T)$ such that $T/M_i=E_i$ is a finite field. Clearly by our assumption $\mathbb{Z}_p\subseteq E_i$, for each $i\geq 1$. Now if for infinitely many $i$, $\mathbb{Z}_p\subsetneq E_i$, then similar to proof of $(1)$ we obtain that $T$ has infinitely many maximal subrings (note that, $E_i$ is a finite field and $\mathbb{Z}_p\subsetneq E_i$, therefore $E_i$ has a maximal subring, see the comments preceding Theorem \ref{pt6}, and hence $T$ has a maximal subring $R_i$ which contains $M_i$). Thus without loss of generality, we may assume that for each $i\geq 1$, we have $\mathbb{Z}_p=E_i$. Let $P_i=M_{2i}$ and $Q_i=M_{2i-1}$, where $i\geq 1$. Then clearly for each $i\geq 1$, we have $T/(P_i\cap Q_i)\cong\mathbb{Z}_p\times\mathbb{Z}_p$. Since $\mathbb{Z}_p\times\mathbb{Z}_p$ has a maximal subring (in fact, $\{(a,a)\ |\ a\in\mathbb{Z}_p\}$ is the only proper and therefore maximal subring of $\mathbb{Z}_p\times\mathbb{Z}_p$, see the comments preceding Theorem \ref{pt6}), we conclude that $T$ has a maximal subring $R_i$ which contains $P_i\cap Q_i$. Clearly, whenever $i\neq j$, then $(P_i\cap Q_i)+(P_j\cap Q_j)=T$. Thus we deduce that for $i\neq j$ we have $R_i\neq R_j$ and therefore $T$ has infinitely many maximal subrings.
\end{proof}

It is not hard to see that by the proof of the previous result, in fact there exists a prime divisor $p$ of $n$ such that $|\Max_{p,F}(T)|=|\Max_F(T)|\leq |\RgMax(T)|$ for some prime number $p$. Now the next result is in order.

\begin{cor}\label{t15}
Let $T$ be a ring. If $|\Max_F(T)|> \aleph_0$, then $T$ has infinitely many maximal subrings. In fact, $|\Max_F(T)|\leq |\RgMax(T)|$.
\end{cor}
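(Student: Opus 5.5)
The plan is to reduce to the nonzero characteristic situation of Proposition \ref{t14}, after first disposing of characteristic considerations. For each $M\in \Max_F(T)$ the quotient $T/M$ is a finite simple ring, so $\Char(T/M)$ is a prime $p_M$. Partition $\Max_F(T)=\bigcup_{p}\Max_{p,F}(T)$ over the countably many primes $p$. Since $|\Max_F(T)|>\aleph_0$ and the index set is countable, there is a prime $p$ with $|\Max_{p,F}(T)|=|\Max_F(T)|$; this is a standard cardinal argument (an uncountable cardinal cannot be a countable union of strictly smaller cardinals when it is a successor or has uncountable cofinality, and in general $\sup$ of the pieces with countably many pieces still yields that some piece has size $\geq$ any given smaller cardinal). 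I would handle this by noting that for the inequality $|\Max_F(T)|\leq|\RgMax(T)|$ it suffices to show $|\Max_{p,F}(T)|\leq |\RgMax(T)|$ for every $p$, since then $|\Max_F(T)|\leq \aleph_0\cdot|\RgMax(T)|=|\RgMax(T)|$ once $\RgMax(T)$ is infinite (and it is infinite because some $\Max_{p,F}(T)$ is uncountable, hence infinite).

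So fix $p$ and put $\mathcal{M}=\Max_{p,F}(T)$, infinite. Split $\mathcal{M}=\mathcal{A}\cup\mathcal{B}\cup\mathcal{C}$, where $\mathcal{A}$ consists of $M$ with $T/M\cong\mathbb{M}_m(E)$, $m>1$; $\mathcal{B}$ of $M$ with $T/M$ a field properly containing $\mathbb{Z}_p$; and $\mathcal{C}$ of $M$ with $T/M\cong\mathbb{Z}_p$. Exactly as in cases (1) and (2) of Proposition \ref{t14}, each $M\in\mathcal{A}\cup\mathcal{B}$ yields a maximal subring $R_M\supseteq M$ (by Theorem \ref{pt6}(4), resp. since a finite proper field extension has a maximal subring), and comaximality $M+M'=T$ forces $R_M\neq R_{M'}$; this gives injections $\mathcal{A},\mathcal{B}\hookrightarrow\RgMax(T)$. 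For $\mathcal{C}$ (if infinite) pair its elements off into disjoint pairs $\{P_i,Q_i\}$, $i\in I$ with $|I|=|\mathcal{C}|$ (possible for infinite sets); then $T/(P_i\cap Q_i)\cong\mathbb{Z}_p\times\mathbb{Z}_p$ gives a maximal subring $R_i\supseteq P_i\cap Q_i$, and distinct pairs are comaximal, so $R_i\neq R_j$. Hence $|\mathcal{C}|\leq|\RgMax(T)|$ as well, and $|\mathcal{M}|=\max\{|\mathcal{A}|,|\mathcal{B}|,|\mathcal{C}|\}\leq|\RgMax(T)|$.

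The main point to check carefully is the distinctness claim: if $R_M=R_{M'}$ then $M+M'\subseteq R_M\subsetneq T$, contradicting $M+M'=T$; similarly $(P_i\cap Q_i)+(P_j\cap Q_j)=T$ for disjoint pairs by the Chinese remainder theorem for comaximal two-sided ideals. Note also that the nonzero characteristic hypothesis of Proposition \ref{t14} is no longer needed, since we work prime by prime; the only genuine obstacle is the cardinal bookkeeping, which the "sum over countably many primes" bound handles cleanly.
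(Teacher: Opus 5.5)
Your argument is correct. Its core is the paper's: split $\Max_F(T)$ by the characteristic of $T/M$, then for a fixed prime $p$ rerun the three cases of Proposition~\ref{t14}. The cases are residue ring $\mathbb{M}_m(E)$ with $m>1$, a proper finite extension of $\mathbb{Z}_p$, or $\mathbb{Z}_p$ itself (handled by pairing off ideals), and comaximality separates the resulting maximal subrings.

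Where you differ is the cardinal bookkeeping, and there your version is the sound one. The paper asserts that some prime $p$ satisfies $|\Max_{p,F}(T)|=|\Max_F(T)|$, and so does your opening sentence. That is only guaranteed when $|\Max_F(T)|$ has uncountable cofinality, and it can fail. Take primes $p_1<p_2<\cdots$, let $A_n$ be the ring of functions from a set of size $\aleph_n$ to $\mathbb{Z}_{p_n}$ that are constant off a finite set, and put $T=\prod_{n\geq 1}A_n$. The maximal ideals of $T$ coming from non-principal ultrafilters have residue fields of characteristic $0$. Hence $|\Max_F(T)|=\aleph_\omega$, while each $|\Max_{p,F}(T)|<\aleph_\omega$.

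Your fallback avoids this. You bound every piece by $|\RgMax(T)|$: finite pieces trivially, once an uncountable piece has shown $\RgMax(T)$ is infinite. Summing then gives $|\Max_F(T)|\leq\aleph_0\cdot|\RgMax(T)|=|\RgMax(T)|$, with no claim that a single prime carries the full cardinality. So delete the opening claim and its parenthetical justification; the latter only shows the pieces are cofinal in $|\Max_F(T)|$, not that one of them attains it. You also make explicit that an arbitrary infinite $\mathcal{C}$ splits into $|\mathcal{C}|$ disjoint pairs. The paper's sequence-based proof of Proposition~\ref{t14} leaves this step implicit when invoked for the cardinality statement.
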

\begin{proof}
Since $|\Max_F(T)|> \aleph_0$ and the number of prime numbers is countable, we conclude that there exists a prime number $p$ such that $|\Max_{p,F}(T)|=|\Max_F(T)|$. Now by a similar proof of Proposition \ref{t14}, we can see that $|\Max_{p,F}(T)|\leq |\RgMax(T)|$ and hence we are done.
\end{proof}

Finally we conclude this section by the following result.

\begin{prop}\label{t16}
Let $T$ be a ring and $S_1\ncong S_2$ be simple $T$-modules. If any of the following conditions holds, then $T$ has infinitely many maximal subrings.
\begin{enumerate}
\item If $E_1=\End_T(S_1)$ is a field which is not an absolutely algebraic field.
\item If $E_1=\End_T(S_1)$ and $E_2=\End_T(S_2)$ are infinite isomorphic ring.
\end{enumerate}
\end{prop}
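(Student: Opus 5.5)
Let $M_1=\lann_T(S_1)$ and $M_2=\lann_T(S_2)$ be the corresponding left primitive ideals. Write $S_1\cong T/N_1$ and $S_2\cong T/N_2$ with $N_1,N_2$ maximal left ideals, so that $E_i\cong(\mathbb{I}(N_i)/N_i)^{\text{op}}$. The plan is to argue by contradiction: assume $T$ has only finitely many maximal subrings, and use Proposition \ref{t8} to pin down $M_1$ and $M_2$.

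\textbf{Reduction.} If $N_1$ is not an ideal of $T$, then Proposition \ref{t8}$(2)$ says $\mathbb{I}(N_1)/N_1$ is a finite field, hence $E_1$ is finite. The same holds for $N_2$. In case $(1)$, $E_1$ is not absolutely algebraic, so it is infinite, and therefore $N_1$ must be a two-sided ideal. In case $(2)$ both $E_1$ and $E_2$ are infinite, so both $N_1$ and $N_2$ are two-sided ideals. When $N_i$ is two-sided we get $N_i=M_i$ and $T/M_i$ is a division ring isomorphic to $E_i^{\text{op}}$.

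\textbf{Case $(1)$.} Here $T/M_1\cong E_1^{\text{op}}=E_1$ is a field that is not absolutely algebraic. By \cite[Corollary 1.5]{azomn}, quoted at the start of Section 3, such a field has infinitely many maximal subrings. Their preimages in $T$ are maximal subrings of $T$ containing $M_1$, and they are pairwise distinct. This is a contradiction. Note that $S_2$ is not needed in this case.

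\textbf{Case $(2)$.} Here $D_1=T/M_1$ and $D_2=T/M_2$ are infinite division rings with $D_1\cong E_1^{\text{op}}\cong E_2^{\text{op}}\cong D_2$. Since $S_1\ncong S_2$ and, for $M_i$ maximal, $S_i$ is the unique simple $T/M_i$-module, we get $M_1\neq M_2$. As both are maximal ideals, $M_1+M_2=T$, and the Chinese remainder theorem gives
\[
T/(M_1\cap M_2)\cong D_1\times D_2\cong D\times D
\]
with $D$ an infinite division ring. By Corollary \ref{t6}, $D\times D$ has infinitely many maximal subrings, and their preimages are distinct maximal subrings of $T$. This is again a contradiction.

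The main point requiring care is the identification of $E_i$ with $\mathbb{I}(N_i)/N_i$ up to opposite ring. It holds because $S_i\cong T/N_i$ and $\mathbb{I}(N_i)/N_i\cong(\End_T(T/N_i))^{\text{op}}$, as recalled in Section 2. Two further facts are used: finiteness and isomorphism are insensitive to passing to the opposite ring, and the opposite of a field is the same field.
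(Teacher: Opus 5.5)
Your proof is correct and follows essentially the same route as the paper. You first reduce to the case where the relevant maximal left ideals are two-sided, using Proposition~\ref{t8}$(2)$, which is exactly Theorem~\ref{pt1} combined with Proposition~\ref{t8}$(1)$, the two results the paper invokes directly. You then apply the fact that a non-absolutely-algebraic field has infinitely many maximal subrings to $T/M_1$ in case $(1)$, and the result on $D\times D$ (Proposition~\ref{t5}/Corollary~\ref{t6}) to $T/(M_1\cap M_2)$ in case $(2)$.
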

\begin{proof}
First note that since $S_1$ and $S_2$ are simple left $T$-modules, we infer that there exist maximal left ideals $M_1$ and $M_2$, such that $S_i\cong T/M_i$, as left $T$-modules for $i=1,2$.\\
$(\bf 1)$ Note that $E_1^{op}\cong \mathbb{I}(M_1)/M_1$. Now we have two cases, either $M_1$ is an ideal of $T$ or is not. If $M_1$ is an ideal of $T$, then $T/M_1\cong E_1^{op}$ as rings and therefore $T/M_1$ has infinitely many maximal subrings, by \cite[Corollary 1.5]{azomn}. Hence assume that $M_1$ is not an ideal of $T$. Since $E_1$ is not an absolutely algebraic field, we deduce that $E_1$ is an infinite ring. Consequently $\mathbb{I}(M_1)/M_1$ is an infinite ring. Thus by Theorem \ref{t1}, we obtain that $[M_1]$ and a fortiori $\Maxl(T)\setminus \Max(T)$ are infinite. It follows that from Proposition \ref{t8}$(1)$, $T$ has infinitely many maximal subrings, as desired.\\
$(\bf 2)$ We have two cases, if $M_1$ or $M_2$ is not an ideal of $T$. Then analogous to last part of the proof of $(1)$, we infer that $T$ has infinitely many maximal subrings. Thus suppose that $M_1$ and $M_2$ are ideals of $T$. In this case, we have the following ring isomorphisms
$$T/M_1=\mathbb{I}(M_1)/M_1\cong E_1^{op}\cong E_2^{op}\cong \mathbb{I}(M_2)/M_2=T/M_2.$$
Consequently $T/(M_1\cap M_2)\cong E_1^{op}\times E_1^{op}$, as rings (note $M_1\neq M_2$, because $S_1\ncong S_2$). Since $E_1$ and $E_2$ are infinite division rings, we deduce that $T/(M_1\cap M_2)$ has infinitely many maximal subrings, by Theorem \ref{t5}. Therefore $T$ has infinitely many maximal subrings, which complete the proof.
\end{proof}

\section{Rings integral over their centers}
In this section we investigate on the number of maximal subrings in a ring $T$ that is integral over its center. First we need some preliminary observations and notation.
Let $T$ be a ring. Then it is clear that $\Maxl(T)\cap \Max(T)=\Maxr(T)\cap \Max(T)=\Maxr(T)\cap \Maxl(T)$, we denote this set by $\Max_{lr}(T)$. Define $J'(T):=\bigcap_{M\in \Max_{lr}(T)} M$. Then $J'(T)$ is an ideal of $T$ and $J(T)\subseteq J'(T)$. In particular, if $T$ is either left or right quasi-duo ring, then $J(T)=J'(T)$. Moreover, if $\Maxl(T)\setminus \Max(T)=\{M_1,\ldots,M_n\}$ is finite (which occur, in particular, when $T$ has only finitely many maximal subrings, by Proposition \ref{t8}$(5)$), then $J(T)=J'(T)\cap M_1\cap\cdots\cap M_n$. Note also that by Lemma \ref{t9}, if $T$ is an infinite ring with $J'(T)=0$, then either $T$ has infinitely many maximal subrings or $C(T)$ is integrally closed in $T$. We now state the following result.

\begin{prop}\label{t17}
Let $T$ be a ring which is integral over its center. Then either $T$ has infinitely many maximal subrings or the following hold:
\begin{enumerate}
\item $T/J'(T)$ is a reduce Hilbert commutative ring and $|T/J'(T)|\leq 2^{2^{\aleph_0}}$. In particular, $N(T)\subseteq J'(T)$.
\item $|\Max_{lr}(T)|\leq 2^{\aleph_0}$. For each $M\in \Max_{lr}(T)$, $T/M$ is an absolutely algebraic field (thus $|T/M|\leq \aleph_0$).
\item For each $M\in \Maxl'(T)\cup \Maxr'(T)$, $J'(T)+M=T$.
\end{enumerate}
\end{prop}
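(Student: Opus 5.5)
We assume throughout that $T$ is integral over $C:=C(T)$ and has only finitely many maximal subrings, and derive (1)--(3). Write $\bar T:=T/J'(T)$. For $M\in\Max_{lr}(T)$, $M$ is a maximal left ideal and an ideal, so $T/M$ is a division ring $D_M$. It is integral over the image of $C$, which lies in the center of $D_M$. Hence every element of $D_M$ is algebraic over $Z(D_M)$.

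\textbf{Step 1 ($D_M$ is a field).} By Proposition~\ref{t1} (equivalently Proposition~\ref{t8}(3)), since $D_M$ has only finitely many maximal subrings, its center is algebraically closed in $D_M$. Every element is algebraic over the center, so $D_M=Z(D_M)$ and $T/M$ is a field. It has only finitely many maximal subrings, so by \cite[Corollary 1.5]{azomn} it has prime subfield $\mathbb{Z}_p$ and is algebraic over it. Thus $T/M$ is an absolutely algebraic field and $|T/M|\le\aleph_0$, giving the second half of (2).

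\textbf{Step 2 (structure of $\bar T$).} Since $\bar T$ embeds in $\prod_{M\in\Max_{lr}(T)}T/M$, a product of fields, $\bar T$ is a reduced commutative ring and $N(T)\subseteq J'(T)$. For the cardinality bounds:
\begin{itemize}
\item Distinct $M\in\Max_{lr}(T)$ give distinct maximal ideals of $\bar T$, and $\bar T$ has only finitely many maximal subrings.
\item If $|\Max_{lr}(T)|>2^{\aleph_0}$, then fact (9) of the introduction (\cite[Proposition 2.8]{azomn}) applied to the commutative ring $\bar T$ gives $|\RgMax(\bar T)|\ge|\Max(\bar T)|$, which is infinite, a contradiction. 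Hence $|\Max_{lr}(T)|\le 2^{\aleph_0}$.
\item Consequently $|\bar T|\le\prod_M|T/M|\le\aleph_0^{2^{\aleph_0}}=2^{2^{\aleph_0}}$.
\end{itemize}
For the Hilbert property, fact (3) of the introduction (\cite[Corollary 1.9]{azomn}) says a commutative ring with finitely many maximal subrings is Hilbert, and $\bar T$ qualifies because $\RgMax(\bar T)\hookrightarrow\RgMax(T)$ by pulling back. This gives (1) and (2).

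\textbf{Step 3 (part (3)).} Let $M\in\Maxl'(T)$ and suppose $J'(T)+M\neq T$. Since $M$ is a maximal left ideal, this forces $J'(T)\subseteq M$, so $\bar M:=M/J'(T)$ is a maximal left ideal of $\bar T$. But $\bar T$ is commutative, so $\bar M$ is a two-sided ideal, hence $M$ is an ideal of $T$. This contradicts $M\in\Maxl'(T)$. The right-hand case is symmetric.

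\textbf{Main obstacle.} The delicate point is Step 1: checking that integrality over $C(T)$ descends to algebraicity over the center of $D_M$, and that Proposition~\ref{t1} applies when $D_M$ is noncommutative and possibly finite. A finite division ring is already a field, so the finite case is harmless. The cardinality bound also requires care in transferring fact (9), stated for commutative rings, to $\bar T$. This is where I would double-check the hypotheses, since $\Max(\bar T)$ could be larger than the image of $\Max_{lr}(T)$; only the inequality in the needed direction is used.
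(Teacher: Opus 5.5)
Your proof is correct and follows essentially the paper's route. Proposition~\ref{t1} forces each $T/M$ with $M\in\Max_{lr}(T)$ to be a field, and $T/J'(T)$ embeds in $\prod_M T/M$. The commutative results of \cite{azomn} (Corollaries 1.5 and 1.9, Proposition 2.8) are then applied to $T/J'(T)$, and (3) follows from the commutativity of $T/J'(T)$.

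Your ordering is slightly cleaner than the paper's: you prove $|\Max_{lr}(T)|\le 2^{\aleph_0}$ before estimating $|T/J'(T)|$, whereas the paper uses that bound inside its proof of (1). Your worry about $\Max(T/J'(T))$ is also moot. Every maximal ideal of the commutative ring $T/J'(T)$ pulls back to an element of $\Max_{lr}(T)$, so the correspondence is a bijection.
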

\begin{proof}
$(1)$ Assume that $T$ has only finitely many maximal subrings. Thus for each $M\in \Max_{lr}(T)$, the division ring $T/M$ has only finitely many maximal subrings. Since $T$ is integral over its center, we conclude that for each $M\in \Max_{lr}(T)$, the division ring $T/M$ is integral over its center. Thus by Proposition \ref{t1}, we conclude that $T/M$ is a field. Clearly, $T/J'(T)$ embeds in $\prod_{M\in \Max_{lr}(T)} T/M$, thus $T/J'(T)$ is a commutative reduced ring and a fortiori $N(T)\subseteq J'(T)$. Since $T/J'(T)$ is a commutative ring with only finitely many maximal subrings, by \cite[Corollary 1.9]{azomn}, we deduce that $T/J'(T)$ is a Hilbert ring. By \cite[Corollary 1.5]{azomn}, for each $M\in \Max_{lr}(T)$, the field $T/M$ is an absolutely algebraic field and consequently $|T/M|\leq \aleph_0$. Therefore we deduce that
$$|\frac{T}{J'(T)}|\leq |\prod_{M\in \Max_{lr}(T)} T/M|\leq 2^{2^{\aleph_0}}.$$
$(2)$ Since $T/J'(T)$ is a commutative ring, the first part of $(2)$ is an immediate consequence of \cite[(1) of Proposition 2.8]{azomn}. For the last part is evident by the proof of $(1)$.\\ $(3)$ Let $M$ be a maximal left ideal of $T$ which is not an ideal of $T$. Since $T/J'(T)$ is a commutative ring, we conclude that $J'(T)\nsubseteq M$, at once $M+J'(T)=T$ by maximality of $M$.
\end{proof}

As noted in the proof of the previous proposition, if $R$ is a commutative ring with $|\Max(R)|>2^{\aleph_0}$, then $|\RgMax(R)|\geq 2^{\aleph_0}$, see \cite[Proposition 2.8(1)]{azomn}. For noncommutative rings we obtain the following analogue.

\begin{prop}\label{t18}
Let $T$ be a ring which is integral over its center. If one of the following conditions holds, then $T$ has infinitely many maximal subrings.
\begin{enumerate}
\item $|\Maxl(T)|>2^{\aleph_0}$.
\item $|\Maxr(T)|>2^{\aleph_0}$.
\item $|\Max(T)|>2^{\aleph_0}$.
\end{enumerate}
\end{prop}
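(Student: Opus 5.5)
We argue by contradiction: assume $T$ is integral over its center and has only finitely many maximal subrings. Then Proposition \ref{t17} applies, and we aim to show that $|\Maxl(T)|$, $|\Maxr(T)|$ and $|\Max(T)|$ are all at most $2^{\aleph_0}$. This contradicts each of (1), (2) and (3).

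The key step is to split each of these sets into a finite part and a part contained in $\Max_{lr}(T)$. By Proposition \ref{t8}(1), $\Maxl'(T)=\Maxl(T)\setminus\Max(T)$ is finite, so
\[
\Maxl(T)=\Maxl'(T)\cup(\Maxl(T)\cap\Max(T))=\Maxl'(T)\cup\Max_{lr}(T).
\]
Similarly, $\Maxr(T)=\Maxr'(T)\cup\Max_{lr}(T)$ with $\Maxr'(T)$ finite. For the two-sided case,
\[
\Max(T)=\Max'(T)\cup\Max_{lr}(T),
\]
where $\Max'(T)=\Max(T)\setminus\Maxl(T)$ is finite by Proposition \ref{t8}(5). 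Next, Proposition \ref{t17}(2) gives $|\Max_{lr}(T)|\leq 2^{\aleph_0}$. Adding a finite set does not change this bound, so each of the three sets has cardinality at most $2^{\aleph_0}$, which is the desired contradiction.

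The only substantive ingredient is the bound $|\Max_{lr}(T)|\leq 2^{\aleph_0}$ in Proposition \ref{t17}(2), and that has already been established. It rests on the commutativity of $T/J'(T)$, which comes from integrality together with Proposition \ref{t1}, and on \cite[Proposition 2.8(1)]{azomn}. That citation applies because maximal ideals of $T$ containing $J'(T)$ that lie in $\Max_{lr}(T)$ correspond injectively to maximal ideals of the commutative ring $T/J'(T)$, and this ring has finitely many maximal subrings, being a quotient of $T$. Since that proposition is available, I expect no step here to be a real obstacle; the one point needing care is verifying this correspondence.

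For a quantitative version, one could sharpen the argument as follows. If $|\Maxl(T)|>2^{\aleph_0}$, the same decomposition forces $|\Max_{lr}(T)|=|\Maxl(T)|$. Applying \cite[Proposition 2.8(1)]{azomn} to the commutative ring $T/J'(T)$ would then give $|\RgMax(T)|\geq|\Maxl(T)|$, and the same holds in the other two cases. The proposition itself only requires the qualitative statement.
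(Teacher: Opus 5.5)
Your proof is correct and follows the paper's argument. The paper likewise discards the finite sets $\Maxl(T)\setminus\Max(T)$ and $\Max(T)\setminus\Maxl(T)$ using Proposition~\ref{t8}(1),(5), then invokes the bound $|\Max_{lr}(T)|\leq 2^{\aleph_0}$ from Proposition~\ref{t17}(2), phrased as a case split rather than a contradiction.

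Your optional quantitative aside needs more care than you gave it, because the commutativity of $T/J'(T)$ was derived only under the finiteness hypothesis. To get $|\RgMax(T)|\geq|\Maxl(T)|$, assume the contrary, set aside $\Maxl'(T)$ and the members $M\in\Max_{lr}(T)$ with $T/M$ noncommutative (each of these is at most $|\RgMax(T)|$ many, since each such $M$ yields a distinct maximal subring), and then apply \cite[Proposition 2.8]{azomn} to the commutative ring obtained by factoring $T$ by the intersection of the remaining ones.
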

\begin{proof}
We prove $(1)$ and $(3)$. The proof of $(2)$ is similar to $(1)$.\\
$(1)$ First note that if $\Maxl(T)\setminus \Max(T)$ is infinite, then by Proposition \ref{t8}$(1)$, we deduce that $T$ has infinitely many maximal subrings. Hence assume that $\Maxl(T)\setminus \Max(T)$ is finite. Thus $|\Maxl(T)\cap \Max(T)|=|\Maxl(T)|>2^{\aleph_0}$. Now since $\Max_{lr}(T)=\Maxl(T)\cap \Max(T)$, we deduce that $T$ has infinitely many maximal subrings by Proposition \ref{t17}$(2)$.\\
$(3)$ If $\Max(T)\setminus \Maxl(T)$ is infinite, then by Proposition \ref{t8}$(5)$, we conclude that $T$ has infinitely many maximal subrings. Hence assume that  $\Max(T)\setminus \Maxl(T)$ is finite and therefore $|\Max(T)\cap \Maxl(T)|=|\Max(T)|>2^{\aleph_0}$. By an analogous proof of $(1)$, we obtain that $T$ has infinitely many maximal subrings.
\end{proof}

As the following main result shows, if a ring $T$ is an algebraic $K$-algebra (and therefore is integral over its center), then either $T$ has infinitely many maximal subrings, or $T$ exhibits algebraic properties similar to commutative rings with only finitely many maximal subrings, see \cite{azomn}.

\begin{thm}\label{t19}
Let $T$ be an algebraic $K$-algebra over an infinite field $K$. Then either $T$ has infinitely many maximal subrings or the following hold:
\begin{enumerate}
\item $T$ is a quasi-duo ring. In particular, $J(T)=\bigcap_{M\in \Max(T)} M$.
\item For each $M\in \Max(T)$, the ring $T/M$ is an absolutely algebraic field which contains a copy of $K$. In particular, $K$ is an absolutely algebraic field and $|K|=|T/M|=\aleph_0$. Moreover, if $D$ is a division ring which is a subring of $T$, then $D$ is an absolutely algebraic field.
\item $|\Max(T)|\leq 2^{\aleph_0}$ and $|T/J(T)|\leq 2^{2^{\aleph_0}}$.
\item $T/J(T)$ is a reduced Hilbert commutative ring. In fact, $T/J(T)$ is a von Neumann regular ring which is integral over its prime subring (i.e., $\mathbb{Z}_p$, where $p=\Char(K)$ is a prime number). Moreover, $T$ is integral over $\mathbb{Z}_p$.
\item For each distinct maximal ideals $M$ and $N$ of $T$, the fields $T/M$ and $T/N$ are not isomorphic.
\item For each proper ideal $I$ of $T$, $N(T/I)=J(T/I)$.
\item Each strongly prime ideal $Q$ of $T$ is a maximal ideal.
\item The contraction of each maximal ideal of $T$ to any subring $R$ of $T$, remains a maximal ideal of $R$. In particular, $J(R)$ is nil.
\end{enumerate}
\end{thm}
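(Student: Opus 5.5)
Assume throughout that $T$ has only finitely many maximal subrings; we derive (1)--(8) in order, using Theorem \ref{t11} and Proposition \ref{t17} for the early items. \textbf{(1)} is Theorem \ref{t11}(1). In particular $\Max(T)=\Maxl(T)=\Maxr(T)=\Max_{lr}(T)$, so $J(T)=J'(T)=\bigcap_{M\in\Max(T)}M$.

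\textbf{(2)} Since $T$ is algebraic over $K\subseteq C(T)$, it is integral over its center, and Proposition \ref{t17}(2) shows that each $T/M$ is an absolutely algebraic field. As $K\cap M=0$, the field $T/M$ contains a copy of $K$. Subfields of absolutely algebraic fields are absolutely algebraic and countable, so $K$ is absolutely algebraic and $|K|=\aleph_0$. Then $T/M$ is algebraic over $K$, which gives $|T/M|=\aleph_0$. For a division subring $D$ of $T$: every $d\in D$ is algebraic over $K$, hence integral over $\mathbb{Z}_p$. I would show $D$ is commutative by noting $D\cap M$ is an ideal of $D$ that is proper (it misses $1$), so $D\cap M=0$ and $D$ embeds in the field $T/M$.

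\textbf{(3)} and \textbf{(4)} follow from Proposition \ref{t17}(1),(2) with $J=J'$. For (4), $T/J(T)$ embeds in $\prod_M T/M$, a product of fields algebraic over $\mathbb{Z}_p$. Each element of $T/J(T)$ is algebraic over $K$, and $K$ is algebraic over $\mathbb{Z}_p$, so it is integral over $\mathbb{Z}_p$. A reduced commutative ring integral over a field is zero-dimensional, hence von Neumann regular. To lift integrality to $T$, take $t\in T$: it is algebraic over $K$, so $t\in K'[t]$ with $K'$ a finitely generated, hence finite, subfield of $K$. Then $K'[t]$ is a finite-dimensional $K'$-algebra and therefore finite, so $t$ is integral over $\mathbb{Z}_p$.

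\textbf{(5)} Suppose $T/M\cong T/N$ for $M\neq N$. Then $T/(M\cap N)\cong E\times E$ with $E$ infinite by (2). By \cite[Corollary 3.5]{azomn} (or Proposition \ref{t5}), $E\times E$ has infinitely many maximal subrings, and these pull back to $T$.

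\textbf{(6)} For $I$ proper, $T/I$ is again an algebraic $K$-algebra with finitely many maximal subrings. Applying (4) to it, $J(T/I)$ is the kernel of the map to a reduced ring, hence contains $N(T/I)$. For the reverse inclusion, I expect $J$ to be nil because $T$ is integral over $\mathbb{Z}_p$, i.e.\ algebraic: for $x\in J$, the subring $\mathbb{Z}_p[x]$ is finite, so some power $x^k$ is idempotent, and an idempotent in $J$ is $0$.

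\textbf{(7)} A strongly prime $Q$ makes $T/Q$ a prime ring. By (6), $J(T/Q)$ is nil, and $T/J$ is von Neumann regular commutative. I would argue that $T/Q$ is algebraic over $\mathbb{Z}_p$ and prime; a prime algebraic algebra with nil radical that is strongly prime has no nonzero nil ideals. Hence $J(T/Q)=0$, so $T/Q$ is a prime commutative regular ring, i.e.\ a field, and $Q$ is maximal.

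\textbf{(8)} For a subring $R$ and $M\in\Max(T)$, $R/(R\cap M)$ embeds in the absolutely algebraic field $T/M$. A subring of an algebraic extension of $\mathbb{Z}_p$ is a field, so $R\cap M$ is maximal in $R$. Then $J(R)\subseteq\bigcap(R\cap M)=R\cap J(T)$, which is nil by (6).

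The main obstacle is (4)'s claim that $T$ itself, and not just $T/J(T)$, is integral over $\mathbb{Z}_p$; the finite-subfield argument is what handles it. The delicate points of (6) and (7), namely nilness of $J$ and the strongly prime step, both rest on the same algebraicity, so I would prove that carefully first.
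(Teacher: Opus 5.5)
Your proposal is correct. For items (1)--(3), (5), (7) and (8) it follows the paper's proof almost verbatim: Theorem \ref{t11}, Proposition \ref{t17}, the embedding $D\hookrightarrow T/M$, Proposition \ref{t5} applied to $E\times E$, and contracting maximal ideals to $R$.

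The genuine difference is in (4) and (6), where you reverse the paper's logical order.

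\textbf{The paper's order.} It first gets $T/J(T)$ integral over $\mathbb{Z}_p$ by citing \cite[Proposition 2.1]{azomn}, which uses that $T/J(T)$ has only finitely many maximal subrings. It then proves $J(T)$ is nil via \cite[Theorem 4.20]{lam}, since algebraic algebras over a field have nil Jacobson radical. Finally it lifts integrality from $T/J(T)$ to $T$ using this nilness, and (6) reuses the same nilness.

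\textbf{Your order.} You prove integrality of $T$ over $\mathbb{Z}_p$ directly. Once (2) shows $K$ is absolutely algebraic, any $t$ algebraic over $K$ lies in the finite ring $K'[t]$, with $K'$ a finite subfield of $K$. Hence $t^a=t^b$ for some $a>b$. Nilness of $J(T)$, and of $J(T/I)$, then follows from an elementary observation: in the finite ring $\mathbb{Z}_p[x]$ some power $x^k$ is idempotent, and an idempotent in the Jacobson radical is $0$.

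\textbf{Comparison.} Your route is self-contained. It also shows that integrality over $\mathbb{Z}_p$ follows from $K$ being absolutely algebraic alone, without using the finer structure of $T/J(T)$. The paper's route has the advantage that nilness of $J(T)$ holds for every algebraic algebra over any field, independently of the maximal-subring hypothesis.

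\textbf{A small point in (7).} Your sentence about ``a prime algebraic algebra with nil radical'' is unnecessary. With the paper's definition (a strongly prime $Q$ means $T/Q$ has no nonzero nil ideals), $J(T/Q)=0$ is immediate from (6). You should then say explicitly why $T/Q$ is commutative and von Neumann regular. One way is the paper's: $Q$ is an intersection of maximal left ideals, so $J(T)\subseteq Q$, and $T/Q$ is a quotient of $T/J(T)$. The other is to apply (4) to the algebraic $K$-algebra $T/Q$.
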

\begin{proof}
Assume that $T$ has only finitely many maximal subring. Thus by Theorem \ref{t11}, we deduce that $(1)$ holds. The first part of $(2)$ is clear by Proposition \ref{t17}$(2)$. For the second part of $(2)$, let $M$ be any maximal ideal of $T$. Clearly, $T/M$ contains a copy of $K$ (and similarly $D$), thus $K$ and $D$ are absolutely algebraic fields. Consequently, $\aleph_0\leq |K|\leq |T/M|\leq \aleph_0$ and thus the equalities hold. $(3)$ and the first part of $(4)$ are evident by Proposition \ref{t17}. For the second part of $(4)$, note that $T/J(T)$ is a commutative ring which is integral over the field $K$. Hence we deduce that $T/J(T)$ is a zero-dimensional (reduced) ring and a fortiori, it is a von Neumann ring. Since $T$ has only finitely many maximal subrings, we infer that $T/J(T)$ has only finitely many maximal subring. Thus by \cite[Proposition 2.1]{azomn}, we obtain that $T/J(T)$ is integral over $\mathbb{Z}_p$ (note, $K$ is an absolutely algebraic field). Since $T$ is algebraic over $K$, by \cite[Theorem 4.20]{lam}, $J(T)$ is a nil ideal of $T$. Now let $x\in T$, then there exist $n\in\mathbb{N}$ and  $a_0,\ldots,a_{n-1}\in\mathbb{Z}_p$ such that $(x+J(T))^n+a_{n-1}(x+J(T))^{n-1}+\cdots+a_1(x+J(T))+a_0=0$ in the ring $T/J(T)$. Thus $x^n+a_{n-1}x^{n-1}+\cdots+a_1x+a_0\in J(T)$. From the fact that $J(T)$ is nil, we deduce that there exists a natural number $m$ such that $(x^n+a_{n-1}x^{n-1}+\cdots+a_1x+a_0)^m=0$, i.e., $x$ is integral over $\mathbb{Z}_p$. For $(5)$, assume that $T/M\cong T/N$. Thus $T/(M\cap N)\cong T/M\times T/M$. Now note that $T/M$ contains a copy of $K$, and therefore $T/M$ is an infinite field. Hence by Proposition \ref{t5}, $T/M\times T/M$ has infinitely many maximal subrings. It follows that $T$ has infinitely many maximal subrings, which is absurd. For $(6)$, nothing that $T/I$ contains a copy of $K$ and remains algebraic over $K$, thus we may assume that $I=0$. Because $T/J(T)$ is a reduced ring, it is obvious that $N(T)\subseteq J(T)$. Consequently, as we see in the proof of $(4)$, $J(T)$ is nil and then $J(T)=N(T)$. For $(7)$, note that $T/Q$ has no nonzero nil ideal, hence by $(6)$ we conclude that $J(T/Q)=0$, i.e., $Q$ is an intersection of a family of maximal (left) ideal of $T$. Thus $J(T)\subseteq Q$ and then by $(4)$, we deduce that $T/Q$ is a field, i.e., $Q$ is a maximal (left) ideal of $T$. Finally for $(8)$, let $M$ be a maximal ideal of $T$ and $R$ be a subring of $T$. Since $(R+M)/M$ is a subring of the absolutely algebraic field $T/M$, we obtain that $(R+M)/M$ is a field. It follows that $R/(R\cap M)$ is a field, i.e., $R\cap M\in \Max(R)$. This implies that $J(R)\subseteq J(T)\cap R$ and at once $J(R)$ is a nil ideal (note, $J(T)$ is nil by $(6)$).
\end{proof}

\begin{lem}\label{t20}
Let $T$ be a ring which is integral over its center. Assume that $T$ has only finitely many maximal subrings. If $P$ is a prime ideal of $T$ with $J(T/P)=0$, then either $P$ is a maximal ideal of $T$ or $T/P$ is an integral domain. In fact, either $T/P\cong\mathbb{M}_n(F)$ for some natural number $n>1$ and finite field $F$ or $T/P$ is an integral domain.
\end{lem}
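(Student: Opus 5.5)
Pass to $\bar T:=T/P$. Since $P$ is an ideal, $\bar T$ has only finitely many maximal subrings, and it is prime with $J(\bar T)=0$. It is also integral over its center: the image of $C(T)$ lies in $C(\bar T)$, so any monic equation with central coefficients descends to $\bar T$. Thus we may assume $P=0$, so $T$ is a prime ring with $J(T)=0$, integral over $C(T)$, with $\RgMax(T)$ finite.

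By Proposition \ref{t8}$(8)$ and $(5)$, $0=J(T)=Q_1\cap\cdots\cap Q_n\cap J'(T)$, where $\Max'(T)=\{Q_1,\ldots,Q_n\}$ is finite and $J'(T)=\bigcap_{M\in\Max_{lr}(T)}M$. These are finitely many ideals together with one further ideal, and their product lies in their intersection, which is $0$. Since $T$ is prime, either some $Q_k=0$ or $J'(T)=0$.

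\textbf{Case $Q_k=0$.} Then $0$ is a maximal ideal, so $P$ is maximal. Moreover $0\in\Max'(T)$, so Proposition \ref{t8}$(4)$ gives $T=T/Q_k\cong\mathbb{M}_n(F)$ with $F$ a finite field and $n>1$.

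\textbf{Case $J'(T)=0$.} By Proposition \ref{t17}$(1)$, the ring $T/J'(T)=T$ is a reduced commutative ring. Being commutative and prime, it is an integral domain. It remains to check the ``in fact'' form: if $P$ happens to be maximal in this case, then $T$ is a field, which is still an integral domain. If instead $T$ is simple and not a division ring, then $0$ is not a maximal left ideal, so $0\in\Max'(T)$ and Proposition \ref{t8}$(4)$ again gives $\mathbb{M}_n(F)$. So the dichotomy holds.

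The main point requiring care is splitting $J(T)$ into finitely many prime-detectable pieces. Primeness only lets us conclude from a finite intersection (via the product of the ideals) that one factor is zero. This is exactly why we need the finiteness of $\Max'(T)$ from Proposition \ref{t8}$(5)$, and why the infinite family of $\Max_{lr}$ ideals must be grouped into the single ideal $J'(T)$ rather than treated individually.
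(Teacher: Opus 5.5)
Your proof is correct, but it is organized differently from the paper's. The paper does not reduce to $P=0$ or use Proposition~\ref{t8}(8). Instead it uses $J(T/P)=0$ to write $P$ itself as an intersection of maximal left ideals, $P=M_1\cap\cdots\cap M_n\cap N$, where $M_i\in\Maxl'(T)$ (finitely many by Proposition~\ref{t8}(1)) and $N$ is an intersection of members of $\Max_{lr}(T)$. It then uses the observation that a prime ideal which is the intersection of a left ideal and a two-sided ideal equals one of them. If $P=M_1\cap\cdots\cap M_n$, then $T/P$ embeds in a semisimple module and, being prime, is simple Artinian. If $P=N$, Proposition~\ref{t1} makes each $T/M_i$ a field, so $T/P$ is commutative and prime. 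The ``in fact'' clause then needs a separate case split on $T/P\cong\mathbb{M}_n(D)$: $n=1$ is ruled out by Proposition~\ref{t1}, and for $n>1$ Theorem~\ref{t4} makes $D$ finite.

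You decompose $J$ along maximal two-sided ideals (Proposition~\ref{t8}(5),(8)), which is the decomposition the paper uses in its corollary on infinite prime rings with $J(T)=0$ right after Proposition~\ref{t8}. This needs only the usual product form of primeness. Your two cases land directly on Proposition~\ref{t8}(4), which gives $\mathbb{M}_n(F)$ with $F$ finite and $n>1$ at once, and on Proposition~\ref{t17}(1). So your route is shorter and gives the sharper dichotomy without a second case analysis.

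The price is that you cite heavier earlier results. Proposition~\ref{t8}(8) is proved, via part (7), by exactly the left-ideal decomposition the paper uses here. Propositions~\ref{t8}(4) and \ref{t17}(1) rest on Theorem~\ref{t4} and Proposition~\ref{t1}, so the underlying ingredients coincide. All of these precede the lemma, so there is no circularity.

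The final check in your last case is superfluous: once $J'(T)=0$ the ring is commutative, so it cannot be simple without being a field.
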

\begin{proof}
Since $T$ has only finitely many maximal subrings, then by Proposition \ref{t8}$(1)$ we deduce that $\Maxl'(T)$ is finite. Hence by $J(T/P)=0$ we infer that there exist $M_1,\ldots, M_n\in \Maxl'(T)$, $n\geq 0$ and $N=\bigcap_{i\in I}M_i$, where $M_i\in \Max(R)\cap \Maxl(T)$ such that $P=M_1\cap\cdots\cap M_n\cap N$. Thus by the comments preceding Proposition \ref{t8}, either $P=M_1\cap\cdots\cap M_n$ or $P=N$, because $P$ is a prime ideal of $T$. If $P=M_1\cap\cdots\cap M_n$, then $T/P$ is a left $T$-submodule of the semisimple $T$-module $T/M_1\times\cdots\times T/M_n$. Consequently $T/P$ is a simple Artinian ring. Hence $P$ is a maximal ideal of $T$. Thus assume that $P=N$. Since $T$ is integral over its center, we infer that for each $i\in I$, the division ring $T/M_i$ is integral over its center. Hence, if for some $i\in I$, $T/M_i$ is not a field, then by Proposition \ref{t1}, we obtain that $T/M_i$ and therefore $T$ have infinitely many maximal subrings which is impossible. Thus for each $i\in I$, $T/M_i$ is a field. Clearly $T/P$ embeds in $\prod_{i\in I}T/M_i$ and therefore $T/P$ is a commutative (reduced prime) ring. It follows that $T/P$ is an integral domain, because $P$ is prime. For the last part, if $T/P$ is not an integral domain, then since $T/P$ is a simple Artinian ring, we infer that $T/P\cong\mathbb{M}_n(D)$, where $D$ is a division ring and $n$ is a natural number. Now we have two cases: If $n=1$, then $T/P$ is a division ring which is integral over its center, hence by Proposition \ref{t1}, we deduce that $T/P$ is a field, for $T/P$ has only finitely many maximal subrings. Thus $T/P$ is an integral domain which is absurd. Now suppose that $n>1$, we claim that $D$ is finite. Otherwise, by Theorem \ref{t4}, we deduce that $T/P$ has infinitely many maximal subrings which is absurd. Thus $D$ is a finite field, as desired.
\end{proof}

\begin{prop}\label{t21}
Let $T$ be a ring which is integral over its center. Assume that $T$ has only finitely many maximal subrings. Then the following hold:
\begin{enumerate}
\item $\Prml(T)=\Max(T)=\Prmr(T)$.
\item If $C(R)$ is a zero-dimensional ring, then an ideal $P$ of $T$ is a maximal ideal of $T$ if and only if $P$ is a strongly prime ideal of $T$.
\end{enumerate}
\end{prop}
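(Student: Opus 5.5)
For (1), take a left primitive ideal $P=\lann_T(T/M)$ with $M\in\Maxl(T)$. Then $P$ is prime and $J(T/P)=0$. By Lemma \ref{t20}, either $P$ is a maximal ideal, or $T/P$ is an integral domain. In the domain case, $T/P$ is commutative and left primitive. A commutative primitive ring is a field, since a faithful simple module over a commutative ring forces the ring to be a field. So $P$ is maximal in either case, and $\Prml(T)\subseteq\Max(T)$.

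For the reverse inclusion, let $N\in\Max(T)$. Then $T/N$ is simple, hence left primitive: take any maximal left ideal $L\supseteq N$; then $\lann_T(T/L)$ is a proper ideal containing $N$, so it equals $N$. Thus $\Max(T)\subseteq\Prml(T)$, and the right-hand version $\Prmr(T)=\Max(T)$ follows symmetrically. The one point to check is that Lemma \ref{t20} really applies, i.e.\ that $J(T/P)=0$ for primitive $P$; this is standard.

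For (2), I read $C(R)$ as $C(T)$ and assume it is zero-dimensional. A maximal ideal $M$ is strongly prime, since $T/M$ is simple and hence has no nonzero nil ideals and is prime. Conversely, let $Q$ be strongly prime. Because $T$ is integral over $C(T)$, I would use the standard lying-over/incomparability theory for integral extensions (prime ideals of $T$ contract to primes of $C(T)$; INC and LO hold for centrally integral rings). Then $T/Q$ is a prime ring integral over its central image $C(T)/(C(T)\cap Q)$, which is a domain and, by zero-dimensionality, a field $F$. A prime ring integral (algebraic) over a central field is primitive and indeed simple. Concretely, for $x\neq0$ the ideal $(T/Q)x(T/Q)$ meets the center nontrivially, so every nonzero ideal contains a unit. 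Hence $Q$ is maximal.

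The main obstacle is justifying that a prime ring algebraic over a central field is simple. My first attempt: take a nonzero ideal $I$ of $\bar T=T/Q$ and pick $0\neq a\in I$. Since $\bar T$ has no nonzero nil ideals, $I$ is not nil, so I may choose $a\in I$ non-nilpotent. Let $f$ be the minimal polynomial of $a$ over $F$ and write $f=x^kg$ with $g(0)\neq0$. Then $g(a)a^k=0$. If $g$ has positive degree, $e$-type idempotent arguments in $F[a]$ (an Artinian commutative ring) produce a nonzero idempotent in $F[a]a\subseteq I$. Then the subring $e\bar Te$ is again prime and algebraic, and an induction on $F[a]$ gives a unit of $F[a]$ inside $I$. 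More simply, $F[a]$ is Artinian and $a$ is not nilpotent, so $F[a]a$ contains a nonzero idempotent $e$. Alternatively, I would invoke Lemma \ref{t20} directly: $J(T/Q)$ is nil because algebraic algebras have nil Jacobson radical, hence $J(T/Q)=0$ by strong primeness, and Lemma \ref{t20} then gives that $Q$ is maximal or $T/Q$ is an integral domain algebraic over the field $F$, hence a field. I would follow this second route, with the nil-radical fact (\cite[Theorem 4.20]{lam}-type) as the key step.
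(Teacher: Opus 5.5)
Your committed route is the paper's proof. For (1) you apply Lemma \ref{t20} to a primitive ideal and use that a commutative primitive ring is a field, and you also supply the routine inclusion $\Max(T)\subseteq\Prml(T)$, which the paper leaves implicit. For (2) you contract $Q$ to a maximal ideal of the zero-dimensional center, get $J(T/Q)$ nil and hence zero, and finish with Lemma \ref{t20} plus the fact that a domain integral over a field is a field.

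Do discard your abandoned first claim, though. A strongly prime ring that is algebraic over a central field need not be simple: $F\cdot 1$ plus the finite-rank operators on an infinite-dimensional $F$-vector space is such a ring, and it has the proper ideal of finite-rank operators. That is exactly why the finiteness of $\RgMax(T)$ has to enter through Lemma \ref{t20}.
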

\begin{proof}
$(1)$ Let $P$ be a left primitive ideal of $T$. Clearly, $J(T/P)=0$ and therefore by Lemma \ref{t20}, we conclude that either $P$ is a maximal ideal of $T$ or $T/P$ is a commutative left primitive ring. Since a commutative primitive ring is a field, we infer that $T/P$ is a field and consequently $P$ is a maximal ideal of $T$. Similarly each right primitive ideal of $T$ is a maximal ideal.\\
$(2)$ It is clear that each maximal ideal $P$ of $T$ is a strongly prime ideal of $T$, for $T/P$ is a simple ring and therefore has no nonzero nil ideals. Conversely, let $P$ be a strongly prime ideal of $T$, i.e., $T/P$ has no nonzero nil ideal. It is not hard to see that $P\cap C(T)$ is a prime ideal of $C(T)$. It follows that by the assumption $P\cap C(T)$ is a maximal ideal of $C(T)$. This implies that $C(T)/(P\cap C(T))\cong (C(T)+P)/P$ is a field. Since $T$ is integral over its center, we conclude that $T/P$ is integral over the field $E:=(C(T)+P)/P$. Hence by \cite[Theorem 4.20]{lam}, $J(T/P)$ is a nil ideal and therefore $J(T/P)=0$, because $P$ is a strongly prime ideal of $T$. Consequently by Lemma \ref{t20}, either $P$ is a maximal ideal of $T$ or $T/P$ is an integral domain. In the second case note that since $T/P$ is integral over the field $E$, we obtain that $P$ is maximal.
\end{proof}

The next main result in this section shows that if $T$ is a ring that is integral over its center and has only finitely many maximal subrings, then $T/J(T)$ is embeds into a ring of the form $R\times S$, where $R$ is a direct product of certain absolutely algebraic fields and $S$ is a finite semisimple ring.

\begin{thm}\label{t22}
Let $T$ be a ring which is integral over its center. Assume that $T$ has only finitely many maximal subrings. Then for each maximal ideal $M$ of $T$ either $T/M$ is an absolutely algebraic field with only finitely many maximal subrings or $T/M\cong\mathbb{M}_n(F)$ for some finite field $F$ and $n>1$. In particular, if $J(T)=0$, then $T$ embeds into
$$\prod_{i\in I}E_i\times\prod_{j\in J}\mathbb{M}_{n_j}(F_j)$$
where each $E_i$ is an absolutely algebraic field with only finitely many maximal subrings, $n_j>1$ and each $F_j$ is a finite field. Moreover $J$ is finite, $|I|\leq 2^{\aleph_0}$ and if $E_i$ is infinite for some $i\in I$, then for each $i'\in I\setminus \{i\}$, $E_{i}\ncong E_{i'}$, as rings.
\end{thm}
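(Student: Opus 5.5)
Fix a maximal ideal $M$ of $T$ and split according to whether $M$ is also a maximal left ideal, i.e. $M\in\Max_{lr}(T)$, or $M\in\Max'(T)$. In the first case, Proposition \ref{t17}$(2)$ gives that $T/M$ is an absolutely algebraic field. Since $T/M$ is a homomorphic image of $T$, every maximal subring of $T/M$ pulls back to a distinct maximal subring of $T$, so $T/M$ has only finitely many maximal subrings. In the second case, Proposition \ref{t8}$(4)$ gives directly $T/M\cong\mathbb{M}_n(F)$ with $F$ finite and $n>1$. This proves the dichotomy. Alternatively, one can apply Lemma \ref{t20} with $P=M$, noting $J(T/M)=0$ because $T/M$ is simple.

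For the embedding, assume $J(T)=0$. By Proposition \ref{t8}$(8)$, $0=J(T)=\bigcap_{M\in\Max(T)}M$, so the canonical map $T\to\prod_{M\in\Max(T)}T/M$ is injective. Let $I$ index $\Max_{lr}(T)$, with $E_i=T/M_i$, and let $J$ index $\Max'(T)$, with $T/Q_j\cong\mathbb{M}_{n_j}(F_j)$. The first part then gives the displayed embedding. Finiteness of $J$ is Proposition \ref{t8}$(5)$, and $|I|=|\Max_{lr}(T)|\leq 2^{\aleph_0}$ is Proposition \ref{t17}$(2)$. The finite factor $\prod_{j\in J}\mathbb{M}_{n_j}(F_j)$ is a finite semisimple ring, which gives the form stated in the abstract.

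For the last assertion, suppose $E_i$ is infinite and $E_i\cong E_{i'}$ for some $i'\neq i$. Since $M_i\neq M_{i'}$ are comaximal, the Chinese remainder theorem gives $T/(M_i\cap M_{i'})\cong E_i\times E_i$. By Proposition \ref{t5}, or \cite[Corollary 3.5]{azomn} since $E_i$ is a field, $E_i\times E_i$ has infinitely many maximal subrings. These pull back to infinitely many maximal subrings of $T$, a contradiction.

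The main point needing care is the claim that maximal subrings of a quotient $T/N$ lift injectively to maximal subrings of $T$ containing $N$. This is the standard correspondence and is used repeatedly earlier in the paper. I expect no serious obstacle beyond bookkeeping.
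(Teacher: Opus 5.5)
Your proof is correct and follows essentially the paper's own route. Your split into $\Max_{lr}(T)$ versus $\Max'(T)$ is the same as the paper's split according to whether $T/M$ is a division ring, and Proposition~\ref{t17}$(2)$ and Proposition~\ref{t8}$(4)$ simply package the paper's direct appeals to Proposition~\ref{t1} with \cite[Corollary 1.5]{azomn} and to Theorem~\ref{t4}. The remaining steps also match the paper: Proposition~\ref{t8}$(8)$ for the embedding, Proposition~\ref{t8}$(5)$ for finiteness of $J$, a $2^{\aleph_0}$ bound for $|I|$, and Proposition~\ref{t5} via the Chinese remainder theorem for the non-isomorphism claim. For the bound on $|I|$ the paper cites Proposition~\ref{t18}, while you cite Proposition~\ref{t17}$(2)$, on which Proposition~\ref{t18} itself rests.
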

\begin{proof}
Let $M$ be a maximal ideal of $T$, then by the assumption $T/M$ has only finitely many maximal subrings and is integral over its center. Hence by Theorem \ref{t1}, we infer that whenever $T/M$ is a division ring, then $T/M$ is a field with only finitely many maximal subrings and therefore $T/M$ is an absolutely algebraic field, by \cite[Corollary 1.5]{azomn}. Thus assume that $T/M$ is not a division ring. Now by Theorem \ref{t4}, whenever $T/M$ is an infinite ring, then $T/M$ has infinitely many maximal subrings which is impossible. Thus $T/M$ is a finite ring. Therefore $T/M\cong\mathbb{M}_n(F)$, where $F$ is a finite ring and $n$ is a natural number. Since $T/M$ is not a division ring in this case, we deduce that $n>1$. Also note that since $T$ has only finitely many maximal subrings, then $T$ has only finitely many distinct maximal ideals $M$ for them the ring $T/M$ is not a field, by Proposition \ref{t8}$(5)$, say $M_1,\ldots, M_k$ (and suppose that $T/M_i\cong\mathbb{M}_{n_i}(F_i)$, for some finite field $F_i$ and $n_i>1$). For the next part, by Proposition \ref{t8}$(8)$, we have $0=J(T)=\bigcap_{M\in \Max(T)}M$. Consequently by the first part of the proof of the theorem, $0=J(T)=M_1\cap\cdots\cap M_k\cap\bigcap_{i\in I}N_i$, where for each $i\in I$, $E_i:=T/N_i$ is an absolutely algebraic field with only finitely many maximal subrings. Clearly, we may assume that for $i\neq j$ in $I$, $N_i\neq N_j$. Thus $T$ embeds in $T/M_1\times\cdots\times T/M_k\times\prod_{i\in I}T/N_i$. It follows that from Proposition \ref{t18}, $|\Max(T)|\leq 2^{\aleph_0}$ and then $|I|\leq 2^{\aleph_0}$. Finally we show that if $i\neq j$ in $I$ and $T/N_i$ is infinite, then $T/N_i\ncong T/N_j$. Note that if $T/N_i\cong T/N_j$, then by Proposition \ref{t5}, we deduce that $T/(N_i\cap N_j)\cong T/N_i\times T/N_i$ has infinitely many maximal subrings which is impossible. Hence we are done (note that clearly $|J|=k$ is finite).
\end{proof}

In the next section we precisely determine when a ring of the form $T=\prod_{i\in I}\mathbb{M}_{n_i}(E_i)$, where each $E_i$ is a field and $n_i\in\mathbb{N}$, has only finitely many maximal subrings. Below we derive some related results for Artinian/Noetherian rings. First we require the following remark.

\begin{rem}
It is not hard to see that if $S$ is a commutative ring with nonzero characteristic, say $n$, $\Max(S)=\{M_1,\ldots, M_k\}$ and $J(S)$ is a nil ideal of $S$ (in particular, if $S$ is a commutative Artinian ring with nonzero characteristic $n$), then $S$ is integral over $\mathbb{Z}_n$ if and only if each $S/M_i$ is an absolutely algebraic field.
\end{rem}

\begin{thm}\label{t23}
Let $T$ be a ring which is integral over its center $C:=C(T)$. If any of the following condition holds, then $T$ has infinitely many maximal subrings.
\begin{enumerate}
\item $C$ is an Artinian ring which is either uncountable or of zero characteristic and or is not integral over its prime subring.
\item $C$ is a Noetherian ring with $|C|>2^{\aleph_0}$.
\end{enumerate}
\end{thm}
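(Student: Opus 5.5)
We argue by contradiction: assume $T$ has only finitely many maximal subrings. The plan is to push everything down to a commutative quotient of $T$ where the facts (4) and (5) of the Introduction apply. We then lift the resulting infinitely many maximal subrings back to $T$. The natural tool is integrality. Since $T$ is integral over $C$, lying-over type arguments relate maximal ideals of $T$ to maximal ideals of $C$. By Proposition \ref{t17}, $T/J'(T)$ is a commutative reduced Hilbert ring, every $T/M$ with $M\in\Max_{lr}(T)$ is an absolutely algebraic field, and $\Max_{lr}(T)$ has cardinality at most $2^{\aleph_0}$. By Proposition \ref{t8}, the sets $\Maxl'(T)$, $\Maxr'(T)$ and $\Max'(T)$ are finite, and $J(T)=\bigcap_{M\in\Max(T)}M$.

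\textbf{Reduction to $C$.} Every maximal ideal $M$ of $T$ contracts to a maximal ideal $M\cap C$ of $C$, because $T/M$ is integral over the image of $C$, which lies in the center of the simple ring $T/M$, a field. Conversely, every maximal ideal $\mathfrak m$ of $C$ lies under some maximal ideal of $T$, by lying over for integral central extensions (\cite{rvn}/\cite{lam}). Hence $C/\mathfrak m$ embeds in $T/M$ for a suitable $M\in\Max(T)$. By Theorem \ref{t22}, each such $T/M$ is either a finite matrix ring or an absolutely algebraic field. So each residue field $C/\mathfrak m$ is absolutely algebraic, in particular countable and of prime characteristic. We also need to control $J(C)$. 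In case (1), $J(C)$ is nilpotent, so $C$ is finite-length over itself with finitely many maximal ideals. In case (2), $J(C)$ need not be nil, so I would instead use the Hilbert property of $T/J'(T)$ and argue via finitely many maximal ideals.

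\textbf{Case (1).} Let $C$ be Artinian with maximal ideals $\mathfrak m_1,\dots,\mathfrak m_k$, and $\mathfrak m_1\cdots\mathfrak m_k$ nilpotent. Each $C/\mathfrak m_i$ is absolutely algebraic, hence of prime characteristic and countable. Since $J(C)$ is nilpotent and $C$ is Noetherian, Lemma \ref{pt7} gives that $C$ is countable. Moreover $\Char(C)$ is a nonzero product of prime powers, because $C\cong\prod C_{\mathfrak m_i}$ with each local factor having nilpotent maximal ideal containing $p_i$. By the Remark before this theorem, $C$ is integral over its prime subring. All three alternatives in (1) are thereby contradicted.

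\textbf{Case (2), the main obstacle.} For Noetherian $C$ with $|C|>2^{\aleph_0}$, I would use the Krull intersection theorem or fact (5) applied to $C/\mathfrak p$ for suitable primes. First note that $T/J'(T)$ is commutative Hilbert and integral over the image of $C$. This image is $C/(C\cap J'(T))$, and it is a Hilbert ring whose residue fields are countable. There are at most $2^{\aleph_0}$ maximal ideals, each with countable residue field. Krull intersection in the Noetherian ring $C$ then yields an injection $C\hookrightarrow\prod_{\mathfrak m}\prod_n C/\mathfrak m^n$, provided the relevant intersection is zero. Each $C/\mathfrak m^n$ is countable by Lemma \ref{pt7}. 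This gives $|C|\le(\aleph_0)^{2^{\aleph_0}}=2^{2^{\aleph_0}}$, which alone is not enough. The hard step is sharpening this to $|C|\le 2^{\aleph_0}$. I would try showing that $C$ is semilocal, using that $\Max(C)$ is in bijection with a set of pairwise non-isomorphic absolutely algebraic residue fields by Theorem \ref{t22} and Theorem \ref{t19}(5)-type arguments. Alternatively, I would pass directly to an infinite quotient $C/\mathfrak p$ and apply fact (5) together with lifting maximal subrings along $C\subseteq T$. Failing these, I would invoke \cite[Theorem 2.7]{azomn} on $C$ and transfer maximal subrings of $C$ to $T$ via $R\mapsto R+(\text{ideal})$. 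Showing that this transfer produces infinitely many distinct maximal subrings of $T$ is the point I expect to be most delicate.
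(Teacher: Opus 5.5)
Your case (1) is correct. Lying over together with Theorem \ref{t22} shows that every residue field of $C$ embeds in the center of some $T/M$ with $M\in\Max(T)$, and is therefore absolutely algebraic. Lemma \ref{pt7} and the Remark before the theorem then show that the Artinian ring $C$ is countable, of nonzero characteristic, and integral over its prime subring. This is the paper's argument read contrapositively. The paper quotes \cite{azkra} to obtain a maximal ideal of $C$ whose residue field is not absolutely algebraic, lifts it to a prime $Q$ of $T$ by lying over, and applies Theorem \ref{t19}(2) to $T/Q$. You prove that commutative input directly instead.

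Case (2), however, is not proved. You list strategies but carry none of them out, and the first two cannot work.
\begin{itemize}
\item Your route via semilocality of $C$ never uses $|C|>2^{\aleph_0}$, and without that hypothesis semilocality is false. The ring $T=C=\mathbb{Z}$ is integral over its center and has no maximal subrings at all. Yet it has infinitely many maximal ideals, with pairwise non-isomorphic residue fields.
\item You give no mechanism for turning maximal subrings of the center into maximal subrings of $T$, and none is apparent. So fact (5), or \cite[Theorem 2.7]{azomn}, applied to $C$ says nothing about $T$.
\item Taking Krull intersections over all maximal ideals at once only gives $|C|\le 2^{2^{\aleph_0}}$, as you observed yourself.
\end{itemize}

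The missing idea is to work with a single maximal ideal of a domain quotient. Since $C$ is Noetherian, its nilradical $N$ is nilpotent and $C/N\hookrightarrow\prod_{i=1}^r C/P_i$, where $P_1,\dots,P_r$ are the finitely many minimal primes. Lemma \ref{pt7} with $\mathfrak a=(2^{\aleph_0})^+$ then yields a minimal prime $P$ with $|C/P|>2^{\aleph_0}$. Let $D=C/P$ and let $\mathfrak n$ be any maximal ideal of $D$.
\begin{itemize}
\item Krull's intersection theorem in the Noetherian domain $D$ gives $\bigcap_n\mathfrak n^n=0$, so $D\hookrightarrow\prod_{n\ge1}D/\mathfrak n^n$.
\item Each $D/\mathfrak n^n$ has residue field $D/\mathfrak n$, which is a residue field of $C$. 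By what you already showed, it is absolutely algebraic and hence countable.
\item Lemma \ref{pt7} therefore makes each $D/\mathfrak n^n$ countable.
\end{itemize}
Hence $|D|\le\aleph_0^{\aleph_0}=2^{\aleph_0}$, a contradiction.

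This is exactly what the paper extracts from \cite{azkrc}. For a Noetherian domain some $C/M^n$ is uncountable, and in general there is a minimal prime $P$ with $|C/P|=|C|$. The paper then lifts to $T$ by lying over, as in case (1).
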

\begin{proof}
$(1)$ First note that by \cite[Propositions 1.4 and 2.4]{azkra} and the previous remark, $C$ has a maximal ideal $M$ such that $C/M$ is a field which is either uncountable or of zero characteristic and or is not absolutely algebraic field, respectively. Hence in any cases, $K:=C/M$ is not an absolutely algebraic field. By \cite[Proposition 1.2]{blair}, $T$ has a prime ideal $Q$ such that $Q\cap C=M$. Clearly, $K=C/M\subseteq C(T/Q)$ and $T/Q$ is integral over $K$ and therefore $T/Q$ is integral over its center. Thus by Theorem \ref{t19}$(2)$, we conclude that $T/Q$ has infinitely many maximal subrings. Hence $T$ has infinitely many maximal subrings.\\
$(2)$ First assume that $C$ is an integral domain. By the proof of \cite[Corollary 2.7]{azkrc}, for each maximal ideal $M$ of $C$, there exists a natural number $n$, such that $C/M^n$ is an uncountable ring. Since $C/M^n$ is a zero dimensional noetherian local ring with only maximal ideal $M/M^n$, by the first part of the proof of $(1)$, we deduce that $C/M$ is an uncountable field and similar to the proof of $(1)$, we conclude that $T$ has infinitely many maximal subrings. If $C$ is not an integral domain, then by the proof of \cite[Theorem 2.9]{azkrc}, $C$ has a minimal prime ideal $P$ such that $|C/P|=|C|$. By \cite[Proposition 1.2]{blair}, $T$ has a prime ideal $Q$ such that $Q\cap C=P$. Clearly $T/Q$ is integral over $C/P\subseteq C(T/Q)$ (and therefore $T/Q$ is integral over its center). Similar to the first part of the proof of $(2)$, for each maximal ideal $M/P$ of $C/P$, the field $(C/P)/(M/P)$ is uncountable. Since $T/Q$ is integral over $C/P$, by \cite[Proposition 1.2]{blair}, $T/Q$ has a prime ideal $N/Q$ such that $(N/Q)\cap (C/P)=M/P$ and clearly $(T/Q)/(N/Q)$ is integral over $K:=(C/P)/(M/P)\subseteq C((T/Q)/(N/Q))$ (and therefore $(T/Q)/(N/Q)$ is integral over its center). Now since $(T/Q)/(N/Q)$ contains an uncountable field $K=(C/P)/(M/P)$, we immediately conclude that $(T/Q)/(N/Q)$ has infinitely many maximal subrings by Theorem \ref{t19}$(2)$. Thus $T$ has infinitely many maximal subrings too.
\end{proof}

\begin{thm}\label{t24}
Let $T$ be a left Artinian algebraic $K$-algebra over an infinite field $K$. Then either $T$ has infinitely many maximal subrings or $T$ is countable and is integral over its prime subring.
\end{thm}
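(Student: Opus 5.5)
Assume that $T$ has only finitely many maximal subrings; we aim to show that $T$ is countable and integral over $\mathbb{Z}_p$. Since $T$ is an algebraic $K$-algebra over the infinite field $K$, Theorem \ref{t19} applies. Part $(4)$ gives directly that $T$ is integral over $\mathbb{Z}_p$ with $p=\Char(K)$, which is the prime subring of $T$. So it remains to prove that $T$ is countable.

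For countability we use the left Artinian hypothesis. We use three facts about a left Artinian ring: $J(T)$ is nilpotent, $T$ is left Noetherian, and $T/J(T)$ is semisimple Artinian. Hence $T$ has only finitely many maximal ideals. By Theorem \ref{t19}$(1)$, $T$ is quasi-duo, so $J(T)=\bigcap_{M\in\Max(T)}M=M_1\cap\cdots\cap M_k$. By Theorem \ref{t19}$(2)$, each $T/M_i$ is an absolutely algebraic field, hence countable. The Chinese remainder theorem for the finitely many comaximal ideals $M_i$ gives
$$T/J(T)\cong T/M_1\times\cdots\times T/M_k,$$
which is therefore countable.

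To pass from $T/J(T)$ back to $T$, apply Lemma \ref{pt7} with $I=J(T)$, which is nilpotent, and $\mathfrak{a}=\aleph_1$. Since $T$ is left Noetherian and $|T/J(T)|<\aleph_1$, we get $|T|<\aleph_1$, that is, $T$ is countable. Moreover $T$ contains the infinite field $K$, so $|T|=\aleph_0$.

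The argument is essentially an assembly of earlier results. The only step needing care is the finiteness of $\Max(T)$ together with the identification $J(T)=\bigcap M_i$, since it is what makes $T/J(T)$ a finite product of countable fields. Both follow from $T/J(T)$ being semisimple Artinian, so that it has finitely many maximal ideals, and from the quasi-duo property in Theorem \ref{t19}$(1)$.
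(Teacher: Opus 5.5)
Your proof is correct and has the same structure as the paper's: integrality over $\mathbb{Z}_p$ from Theorem \ref{t19}$(4)$, then countability of $T/J(T)$, then Lemma \ref{pt7} using that $J(T)$ is nilpotent and $T$ is left Noetherian. The only difference is how you get countability of $T/J(T)$: the paper cites \cite[Theorem 2.5$(2)$]{azomn} for the commutative Artinian ring $T/J(T)$, whereas you derive it from finiteness of $\Max(T)$, Theorem \ref{t19}$(1)$,$(2)$ and the Chinese remainder theorem, which is slightly more self-contained.
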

\begin{proof}
Assume that $T$ has only finitely many maximal subrings, then by Theorem \ref{t19}$(4)$, we infer that $T/J(T)$ is a von Neumann regular commutative ring and $T$ is integral over $\mathbb{Z}_p$, where $p=\Char(K)$ is a prime number. Now note that there exists a natural number $m$ such that $J(T)^m=0$, for $T$ is a left Artinian ring. Since $T/J(T)$ is an Artinian commutative ring with only finitely many maximal subrings, by \cite[Theorem 2.5$(2)$]{azomn}, we deduce that $T/J(T)$ is a countable ring. Hence by Lemma \ref{pt7}, $T$ is a countable ring.
\end{proof}

\begin{cor}\label{t25}
Let $T$ be a left Noetherian algebraic $K$-algebra over an infinite field $K$. Then either $T$ has infinitely many maximal subrings or $T$ is a countable left Artinian ring which is integral over its prime subring.
\end{cor}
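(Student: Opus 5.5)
The plan is to reduce Corollary \ref{t25} to Theorem \ref{t24} by showing that, when $T$ has only finitely many maximal subrings, the left Noetherian ring $T$ is in fact left Artinian.

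Assume $T$ has only finitely many maximal subrings. Theorem \ref{t19} then gives the following structural information:
\begin{itemize}
\item $T$ is quasi-duo, by part $(1)$.
\item $J(T)$ is nil, and $J(T)=N(T)$, by part $(6)$.
\item $T/J(T)$ is a commutative von Neumann regular ring, by part $(4)$.
\item $T/J(T)$ is integral over $\mathbb{Z}_p$, where $p=\Char(K)$, also by part $(4)$.
\end{itemize}

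The next step is to make $J(T)$ nilpotent. Since $T$ is left Noetherian, Levitzki's theorem says every nil one-sided ideal is nilpotent. Hence $J(T)^m=0$ for some $m$.

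Then we show $T/J(T)$ is semisimple. The ring $T/J(T)$ is a commutative Noetherian ring, as a quotient of a left Noetherian ring, and it is von Neumann regular. A Noetherian von Neumann regular commutative ring is a finite product of fields, since it is zero-dimensional and reduced Noetherian. Hence $T/J(T)$ is semisimple Artinian.

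The step needing the most care is deducing that $T$ itself is left Artinian. I would use the Hopkins–Levitzki criterion: a left Noetherian ring $T$ with $J(T)$ nilpotent and $T/J(T)$ semisimple is left Artinian. To argue directly, each factor $J^i/J^{i+1}$ is a finitely generated left module over the semisimple ring $T/J$. It therefore has finite length, so $T$ has finite length as a left module over itself.

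With $T$ known to be left Artinian, Theorem \ref{t24} yields that $T$ is countable and integral over its prime subring $\mathbb{Z}_p$. Countability can also be checked via Lemma \ref{pt7}, since $T/J(T)$ is a finite product of countable absolutely algebraic fields by Theorem \ref{t19}$(2)$. This completes the proof.
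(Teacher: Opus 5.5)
Your proof is correct and shares the paper's skeleton. Levitzki's theorem makes the nil ideal $J(T)$ nilpotent, a Hopkins--Levitzki argument turns left Noetherian into left Artinian, and Theorem \ref{t24} finishes.

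The difference lies in how $T/J(T)$ is shown to be semisimple. The Hopkins--Levitzki step really needs this: nilpotence of $J(T)$ together with left Noetherianity is not enough, as $\mathbb{Z}$ shows.
\begin{itemize}
\item The paper argues unconditionally. It passes from nilpotence of $J(T)$ straight to left Artinian via Lam's Theorem 4.15, without checking that $T/J(T)$ is semisimple.
\item You first assume $T$ has only finitely many maximal subrings. You then get semisimplicity from Theorem \ref{t19}$(4)$, since a commutative Noetherian von Neumann regular ring is a finite product of fields.
\end{itemize}

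Your route is fully justified by results already in the paper. The cost is that you prove Artinianity only under the finiteness assumption. The paper's stronger unconditional claim, that every left Noetherian algebraic algebra over a field is left Artinian, is also true. One way to see it: $T/J(T)$ is semiprime left Goldie, and every regular element of an algebraic algebra is a unit, so $T/J(T)$ coincides with its semisimple classical quotient ring. The paper leaves out an argument of this kind.
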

\begin{proof}
First note that since $T$ is an algebraic $K$-algebra over a field $K$, we infer that $J(T)$ is a nil ideal of $T$, by \cite[Theorem 4.20]{lam}. Therefore by \cite[Theorem 10.30]{lam}, $J(T)$ is a nilpotent ideal of $T$, because $T$ is a left Noetherian ring. Hence $T$ is a left Artinian ring, by \cite[Theorem 4.15]{lam}. Thus by Theorem \ref{t24}, we deduce the assertion.
\end{proof}

Finally we conclude this section by the following result for infinite direct product.

\begin{thm}\label{t26}
Let $\{T_i\}_{i\in I}$ be an infinite family of rings and $T=\prod_{i\in I} T_i$. If each of the following conditions holds, then $T$ has infinitely many maximal subrings:
\begin{enumerate}
\item If each $T_i$ is integral over its center.
\item If $T$ is integral over its center.
\end{enumerate}
\end{thm}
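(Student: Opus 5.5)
The plan is to prove $(2)$ by counting maximal ideals of $T$ and invoking Proposition \ref{t18}$(3)$. For $(1)$ I would instead reduce to a commutative infinite direct product and use the known commutative result quoted in the introduction, fact $(6)$: an infinite product $\prod_{i\in I}R_i$ of commutative rings has at least $2^{|I|}$ maximal subrings.

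\emph{Case $(2)$.} For each ultrafilter $\mathcal{U}$ on $I$, set
$$I_{\mathcal{U}}:=\{x=(x_i)\in T\ |\ \{i\in I\ |\ x_i=0\}\in\mathcal{U}\}.$$
This is a proper two-sided ideal of $T$, since $1\notin I_{\mathcal{U}}$. Choose a maximal ideal $M_{\mathcal{U}}\supseteq I_{\mathcal{U}}$.

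If $\mathcal{U}\neq\mathcal{V}$, pick $A\in\mathcal{U}$ with $I\setminus A\in\mathcal{V}$. The central idempotents $e_A$ and $e_{I\setminus A}$ then satisfy $e_{I\setminus A}\in I_{\mathcal{U}}$ and $e_A\in I_{\mathcal{V}}$. Hence $I_{\mathcal{U}}+I_{\mathcal{V}}=T$ and $M_{\mathcal{U}}\neq M_{\mathcal{V}}$.

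Since $I$ is infinite, there are $2^{2^{|I|}}>2^{\aleph_0}$ ultrafilters on $I$. So $|\Max(T)|>2^{\aleph_0}$. As $T$ is integral over its center, Proposition \ref{t18}$(3)$ gives infinitely many maximal subrings.

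\emph{Case $(1)$.} The difficulty is that the integrality degrees of the $T_i$ may be unbounded, so $T$ itself need not be integral over $C(T)=\prod C(T_i)$. The argument of $(2)$ therefore does not apply directly, and I would work factor by factor. Two preliminary reductions:

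\begin{itemize}
\item For each $j$, the ideal $K_j:=\prod_{i\neq j}T_i$ satisfies $T/K_j\cong T_j$. Any maximal subring of $T_j$ pulls back to a maximal subring of $T$ containing $K_j$. For $j\neq j'$ we have $K_j+K_{j'}=T$, so the pullbacks coming from different indices are distinct.
\item Consequently, if infinitely many $T_j$ have a maximal subring, or some $T_j$ has infinitely many, we are done.
\end{itemize}

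So assume that for all $i$ in a cofinite subset $I_0\subseteq I$, each $T_i$ has only finitely many maximal subrings. Since $\prod_{i\in I_0}T_i$ is a quotient of $T$, we may replace $I$ by $I_0$.

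Next I would show that only finitely many $T_i$ have $\Max_{lr}(T_i)=\emptyset$. Indeed, if $\Max_{lr}(T_i)=\emptyset$, then every maximal left ideal of $T_i$ is not an ideal, so by Theorem \ref{pt6}$(1)$ the ring $T_i$ has a maximal subring. By the previous paragraph this can happen for only finitely many $i$, and we discard those indices as well.

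Now for every remaining $i$, Proposition \ref{t17}$(1)$ applies to $T_i$, which is integral over its center and has only finitely many maximal subrings. It shows that $T_i/J'(T_i)$ is a nonzero commutative ring. Hence
$$T\big/\textstyle\prod_i J'(T_i)\;\cong\;\prod_i T_i/J'(T_i)$$
is an infinite direct product of nonzero commutative rings. By fact $(6)$ of the introduction (\cite[Remark 3.18]{azkrc}) it has at least $2^{|I|}$ maximal subrings, and these pull back to maximal subrings of $T$.

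The main obstacle is exactly the possible unbounded integrality degrees in case $(1)$. The factorwise reduction above is designed to bypass it, so the points to check carefully are:
\begin{itemize}
\item that the pullbacks from different $T_j$ are really distinct;
\item that nonemptiness of $\Max_{lr}(T_i)$ is secured before Proposition \ref{t17} is applied.
\end{itemize}
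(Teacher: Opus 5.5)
Your proposal is correct. For part $(2)$ it follows a genuinely different route from the paper; for part $(1)$ it is essentially the paper's argument.

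\textbf{Part $(2)$.} The paper reduces $(2)$ to $(1)$. For $t\in T_k$ it takes the element of $T$ with $t$ in coordinate $k$ and $0$ elsewhere. It then reads off the $k$-th coordinate of an integral equation over $C(T)=\prod_{i\in I}C(T_i)$ and concludes that each $T_k$ is integral over $C(T_k)$. You bypass $(1)$ entirely: the ultrafilter ideals $I_{\mathcal U}$ produce $2^{2^{|I|}}>2^{\aleph_0}$ distinct maximal ideals of $T$, and Proposition~\ref{t18}$(3)$ finishes.

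The paper's reduction is shorter and completely elementary. Yours needs the cardinality of the set of ultrafilters on $I$ and the heavier Proposition~\ref{t18}, which rests on Proposition~\ref{t17} and on the commutative result for rings with more than $2^{\aleph_0}$ maximal ideals. In exchange, it shows that in case $(2)$ the conclusion is forced purely by the size of $\Max(T)$, with no analysis of the individual factors.

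\textbf{Part $(1)$.} The two arguments differ only in the first case split.
\begin{itemize}
\item The paper splits according to whether infinitely many $T_i$ fail to be quasi-duo. It then builds distinct non-two-sided maximal left ideals $N_j$ of $T$ and applies Proposition~\ref{t8}$(1)$.
\item You split according to whether infinitely many $T_i$ have a maximal subring (or one has infinitely many), and pull these back along the projections $T\to T_j$. The comaximality $K_j+K_{j'}=T$ guarantees distinctness.
\end{itemize}
The remaining case is handled identically, via $\prod_i T_i/J'(T_i)$ and the commutative infinite-product result. The paper writes $J(T_i)$ there, which coincides with $J'(T_i)$ for quasi-duo $T_i$. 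Your version makes explicit two points the paper leaves implicit: why every factor has only finitely many maximal subrings, and why $T_i/J'(T_i)\neq 0$.
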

\begin{proof}
$(1)$ Let $J$ be the set of all $i\in I$ such that $T_i$ is not a quasi-duo ring. Now we have two cases. First assume that $J$ is infinite. Without loss of generality we may suppose that for each $j\in J$, the ring $T_j$ is not a left quasi-duo ring. Consequently, for each $j\in J$, let $M_i$ be a maximal left ideal of $T_j$ which is not an ideal of $T_j$, and $N_j=\prod_{i\in I}A_i$, where $A_i=T_i$ for $i\neq j$ and $A_j=M_j$. Clearly, each $N_j$ is a maximal left ideal of $T$ which is not an ideal of $T$. Hence $T$ has infinitely many maximal left ideal which are not ideals of $T$. Thus $T$ has infinitely many maximal subrings by Proposition \ref{t8}$(1)$. Now, assume that $J$ is finite. Without loss of generality we may suppose that $J=\emptyset$, i.e., each $T_i$ is a quasi-duo ring with only finitely many maximal subrings. Therefore by Proposition \ref{t17}, for each $i\in I$, the ring $T_i/J(T_i)$ is a commutative ring. Take $J:=\prod_{i\in I} J(T_i)$, then clearly $T/J\cong \prod_{i\in I} T/J(T_i)$. Thus $T/J$ is a product of an infinite family of commutative rings. Consequently, by \cite[Remark 3.18]{azkrc} (or see \cite[Theorem 3.9]{azomn}), $T/J$ has infinitely many maximal subrings and so does $T$. Hence $(1)$ holds.\\
$(2)$ It suffices to show that each $T_i$ is integral over its center and use $(1)$. Clearly, $C_T(T)=\prod_{i\in I}C_{T_i}(T_i)$. Let $k\in I$ and $t\in T_k$, and put $x=(x_i)_{i\in I}$ where $x_i=0$ for $i\neq k$ and $x_k=t$. By our assumption, $x$ is integral over the center of $T$. Hence there exist $n\in\mathbb{N}$ and $c_0, c_1,\ldots, c_{n-1}$ in $C_T(T)$ such that $x^n+c_{n-1}x^{n-1}+\cdots+c_1x+c_0=0$. Since $C_T(T)=\prod_{i\in I}C_{T_i}(T_i)$, we have $c_j=(c_{ji})_{i\in I}$, for $0\leq j\leq n-1$, where $c_{ji}\in C_{T_i}(T_i)$. Now by calculating the $k$-th component of $x^n+c_{n-1}x^{n-1}+\cdots+c_1x+c_0=0$, we obtain that $t^n+c_{n-1k}t^{n-1}+\cdots+c_{1k}t+c_{0k}=0$. It follows that $t$ is integral over $C_{T_i}(T_i)$ and we are done by $(1)$.
\end{proof}

\section{Direct product and matrix rings}
In this section we study the number of maximal subrings in matrix rings and in certain direct products of rings. In \cite[Theorem 3.9]{azomn}, the authors precisely determined when a direct product of a family of commutative rings has only finitely many maximal subrings. In particular, if $T$ is an infinite direct product of a family of commutative rings $\{R_i\}_{i\in I}$, then $|\RgMax(T)|\geq 2^{|I|}$. As shown in the final result of the previous section, if $T=\prod_{i\in I} T_i$ is a direct product of an infinite family of rings, and either each $T_i$ is integral over its center, or $T$ itself is integral over its center, then $T$ has infinitely many maximal subrings. This immediately implies that whenever each $T_i=\mathbb{M}_{n_i}(E_i)$, with $E_i$ is a field and $n_i\in\mathbb{N}$, then $T$ has infinitely many maximal subrings (note that each $\mathbb{M}_{n_i}(E_i)$ is finite-dimensional over its center and therefore integral over its center). In the next proposition we give precise characterization of when such a product has only finitely many maximal subrings.

\begin{prop}\label{t27}
Let $T=\prod_{i\in I}\mathbb{M}_{n_i}(E_i)$, where each $E_i$ is a field and $n_i\in\mathbb{N}$. Then $T$ has only finitely many maximal subrings if and only if the following hold:
\begin{enumerate}
\item $I$ is finite.
\item For each $i\in I$, if $n_i>1$, then $E_i$ is finite.
\item For each $i\neq j$ in $I$, if $n_i=n_j=1$, and $E_i$ is infinite, then $E_i\ncong E_j$, as rings.
\item For each $i\in I$ such that $n_i=1$, $E_i$ has only finitely many maximal subrings.
\end{enumerate}
Consequently, if $T$ has only finitely many maximal subrings, then $T=E_1\times\cdots\times E_r\times \mathbb{M}_{n_1}(E_{r+1})\times\cdots\mathbb{M}_{n_s}(E_{r+s})$, where $n_{i}>1$ and $r,s\geq 0$. In particular, $T$ is a semisimple ring. Moreover, if $A=E_1\times\cdots\times E_r$ and $B=\mathbb{M}_{n_1}(E_{r+1})\times\cdots\mathbb{M}_{n_s}(E_{r+s})$, then each maximal subring of $T$ is of the form $A\times B_1$, where $B_1$ is a maximal subring of $B$, or $A_1\times B$, where $A_1$ is a maximal subring of $A$.
\end{prop}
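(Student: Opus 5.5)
For necessity, I assume $T$ has only finitely many maximal subrings and derive (1)--(4). Each $\mathbb{M}_{n_i}(E_i)$ is finite-dimensional over its center $E_i$, hence integral over it. If $I$ were infinite, Theorem \ref{t26}$(1)$ would give infinitely many maximal subrings, so (1) holds.

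Each factor $\mathbb{M}_{n_i}(E_i)$ is a homomorphic image of $T$. Therefore it has only finitely many maximal subrings, since maximal subrings of $T/K$ pull back to distinct maximal subrings of $T$ containing $K$.
\begin{itemize}
\item[(a)] When $n_i>1$ the factor is simple and not a division ring, so Theorem \ref{t4} forces it to be finite. Hence $E_i$ is finite, which is (2).
\item[(b)] When $n_i=1$ the factor is just $E_i$, which gives (4).
\item[(c)] For (3), suppose $n_i=n_j=1$ with $i\neq j$, $E_i$ infinite and $E_i\cong E_j$. Then $E_i\times E_j\cong E_i\times E_i$ is a homomorphic image of $T$, and it has infinitely many maximal subrings by Proposition \ref{t5} (or \cite[Corollary 3.5]{azomn}). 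This is a contradiction.
\end{itemize}

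For sufficiency, assume (1)--(4) and write $T=A\times B$ with $A=E_1\times\cdots\times E_r$ and $B=\mathbb{M}_{n_1}(E_{r+1})\times\cdots\times\mathbb{M}_{n_s}(E_{r+s})$, where $n_i>1$. By (2), $B$ is a finite ring, so it has only finitely many subrings. Every homomorphic image of $A$ is commutative, while every nonzero homomorphic image of $B$ is a product of some of the noncommutative factors $\mathbb{M}_{n_i}(E_{r+i})$, hence noncommutative. So $A$ and $B$ are homomorphically non-isomorphic. Lemma \ref{pt8} then says every maximal subring of $T$ has the form $A_1\times B$ or $A\times B_1$; this is also the final claim of the proposition.

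It remains to show that $A$ has only finitely many maximal subrings. I invoke \cite[Theorem 3.9]{azomn}, or its generalization of statement (1) of the introduction to finite products of fields mentioned there, which should say the following. A finite product of fields has finitely many maximal subrings iff each field does and each infinite field is not isomorphic to another factor. These are exactly conditions (3) and (4). If I had to argue directly, a maximal subring $S$ of $A$ either contains $\prod_{k\ne i}E_k$ for some $i$, giving finitely many choices by (4), or else $S$ contains the kernel of a projection onto some $E_i\times E_j$. In the second case $S$ is a graph-type subring determined by an isomorphism between subfields of bounded index. This case analysis is the main obstacle, and I would rely on the cited theorem for it.

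The "consequently" part follows at once: $T$ is a finite product of simple Artinian rings, so it is semisimple.
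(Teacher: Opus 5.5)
Your proposal is correct and follows the paper's proof essentially step for step. Conditions (2)--(4) come from passing to homomorphic images of $T$ together with Theorem \ref{t4} and Proposition \ref{t5}; condition (1) comes from Theorem \ref{t26}$(1)$, which is the route the paper calls obvious before giving an alternative via \cite[Theorem 3.9]{azomn} and Proposition \ref{t8}$(5)$; and sufficiency comes from Lemma \ref{pt8}, the finiteness of $B$, and the finite-product-of-fields result of \cite{azomn} applied to $A$.

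One correction to your optional direct sketch. By Theorem \ref{t3} and Lemma \ref{pt8}, a maximal subring of $E_i\times E_j$ containing neither factor is the graph of an isomorphism between the full fields $E_i\cong E_j$, not between subfields. Moreover, the case analysis you flagged as the main obstacle can be avoided: group the factors of $A$ by isomorphism type and iterate Lemma \ref{pt8}. This shows from (3) and (4) alone that $A$ has only finitely many maximal subrings, so the citation is not needed.
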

\begin{proof}
First assume that $T$ has only finitely many maximal subrings. This immediately implies that each $\mathbb{M}_{n_i}(E_i)$ has finitely many maximal subrings. Hence if $n_i=1$, $E_i$ has only finitely many maximal subrings and if $n_i>1$, then by Theorem \ref{t4}, $\mathbb{M}_{n_i}(E_i)$ is finite and at once $E_i$ is finite too. Therefore $(2)$ and $(4)$ hold. Although the proof of $(1)$ is obvious by the comment preceding the proposition, but we give another proof. To see this, let $J:=\{i\in I\ |\ n_i=1\}$, $J'=I\setminus J$, $A=\prod_{i\in J} E_i$ and $B=\prod_{i\in J'} \mathbb{M}_{n_i}(E_i)$. Then clearly $T=A\times B$. Consequently by our assumption $A$ and $B$ has only finitely many maximal subrings. Thus by \cite[Theorem 3.9]{azomn}, we obtain that $J$ is finite. It is clear that by $(5)$ of Proposition \ref{t8}, $J'$ is finite and therefore $I$ is finite. Finally note that $(3)$ is an immediate consequence of Proposition \ref{t5}. Conversely, suppose that $(1)-(4)$ hold and let $A$ and $B$ are defined as above. We prove that $T$ has only finitely many maximal subrings. To see this first note that $A$ and $B$ are homomorphically non-isomorphic rings, hence by Lemma \ref{pt8}, each maximal subring of $T$ is of the form $A\times B_1$, where $B_1$ is a maximal subring of $B$, or $A_1\times B$, where $A_1$ is a maximal subring of $A$. This implies that $T$ has only finitely many maximal subrings, for $B$ is finite and by \cite[Corollary 3.12 and of Proposition 3.15$(1)$]{azomn}, $A$ has only finitely many maximal subrings. Thus we are done.
\end{proof}

\begin{prop}\label{t28}
Let $T=\prod_{i\in I}\mathbb{M}_{n_i}(D_i)$, where each $D_i$ is a division ring, $1<n_i\in\mathbb{N}$. Then $T$ has only finitely many maximal subrings if and only if $T$ is a finite ring.
\end{prop}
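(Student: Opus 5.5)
The converse direction is trivial: a finite ring has only finitely many subrings, hence only finitely many maximal subrings. For the forward direction, assume $T$ has only finitely many maximal subrings. I will show three things in turn: that $I$ is finite, that each factor is a finite ring, and then conclude.

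\emph{Step 1: $I$ is finite.} For each $i\in I$, let $\pi_i\colon T\to \mathbb{M}_{n_i}(D_i)$ be the projection and put $M_i:=\ker\pi_i$. Since $T/M_i\cong\mathbb{M}_{n_i}(D_i)$ is simple, $M_i$ is a maximal ideal of $T$. Because $n_i>1$, the ring $T/M_i$ is not a division ring, so $M_i$ is not a maximal left ideal. Hence $M_i\in\Max'(T)$. The $M_i$ are pairwise distinct: $M_i+M_j=T$ for $i\neq j$, since the element with $1$ in coordinate $i$ and $0$ elsewhere lies in $M_j$, and its complement lies in $M_i$. Therefore $|I|\leq|\Max'(T)|\leq|\RgMax(T)|$ by Proposition \ref{t8}$(5)$, so $I$ is finite. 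Alternatively, one can invoke Theorem \ref{pt6}$(4)$ directly: each $\mathbb{M}_{n_i}(D_i)$ has a maximal subring, whose preimage under $\pi_i$ is a maximal subring of $T$ containing $M_i$, and comaximality forces these subrings to be distinct. I will present the argument in this self-contained way, since it mirrors case $(1)$ of Proposition \ref{t14}.

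\emph{Step 2: each factor is finite.} Each $\mathbb{M}_{n_i}(D_i)\cong T/M_i$ is a homomorphic image of $T$, so it has only finitely many maximal subrings, because maximal subrings of a quotient pull back injectively to maximal subrings of $T$. Since $n_i>1$, the ring $\mathbb{M}_{n_i}(D_i)$ is simple but not a division ring, so Theorem \ref{t4} shows it is finite. Equivalently, Proposition \ref{t8}$(4)$ gives $T/M_i\cong\mathbb{M}_m(F)$ with $F$ a finite field.

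\emph{Step 3: conclusion.} $T$ is a finite product of finite rings, hence finite.

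There is no real obstacle here. The only point requiring care is checking that the kernels $M_i$ genuinely lie in $\Max'(T)$, that is, that they are maximal ideals but not maximal left ideals, and that they are pairwise comaximal. Both facts are immediate from $n_i>1$ and the product structure.
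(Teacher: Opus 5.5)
Your proof is correct and follows essentially the same route as the paper. The paper likewise obtains finiteness of $I$ from Proposition \ref{t8}$(5)$, as in the proof of Proposition \ref{t27}, since the kernels of the projections are pairwise comaximal members of $\Max'(T)$, and then applies Theorem \ref{t4} to each factor $\mathbb{M}_{n_i}(D_i)$. Your checks that these kernels are maximal ideals but not maximal left ideals, and that they are pairwise comaximal, spell out what the paper leaves implicit.
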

\begin{proof}
Assume that $T$ has only finitely many maximal subrings. Then analogous to the proof of the previous theorem we deduce that $I$ is finite and for each $i\in I$, $\mathbb{M}_{n_i}(D_i)$ has only finitely many maximal subrings. Hence by Theorem \ref{t4}, $D_i$ is finite and thus $T$ is finite. The converse is evident.
\end{proof}

\begin{cor}\label{t29}
Let $T=\prod_{i\in I}\mathbb{M}_{n_i}(D_i)$, where each $D_i$ is a division ring which is algebraic over its center (in particular, if each $D_i$ is finite dimensional over its center). Then $T$ has only finitely many maximal subrings if and only if the following hold:
\begin{enumerate}
\item $I$ is finite.
\item For each $i\in I$ whenever $n_i>1$, then $D_i$ is a finite field.
\item For each $i\in I$ whenever $n_i=1$, then $D_i$ is a field with only finitely many maximal subrings.
\item For each $i\neq j$ in $I$, if $n_i=n_j=1$ and $D_i$ is infinite, then $D_i\ncong D_j$, as rings.
\end{enumerate}
\end{cor}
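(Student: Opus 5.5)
The plan is to reduce to Proposition \ref{t27}. Each $D_i$ with $n_i=1$ that is algebraic over its center and lies in a ring with only finitely many maximal subrings must be commutative, by Proposition \ref{t1}.

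For the forward direction, assume $T$ has only finitely many maximal subrings. First, $T$ is integral over its center, or at least each factor $\mathbb{M}_{n_i}(D_i)$ is. Indeed, $D_i$ is algebraic over its center $F_i$, and $\mathbb{M}_{n_i}(D_i)$ has center $F_i$. An element $A\in\mathbb{M}_{n_i}(D_i)$ generates, together with $F_i$, an $F_i$-subalgebra, and this subalgebra should be algebraic since $\mathbb{M}_{n_i}(D_i)$ is algebraic over $F_i$ whenever $D_i$ is. This holds because $\mathbb{M}_{n}(D)\cong D\otimes_F\mathbb{M}_n(F)$, and a finite-dimensional argument via the finitely generated subalgebra reduces to the algebraicity of $D$ over $F$. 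I expect this to be the main obstacle if done in full generality. Using the standard fact that matrix rings over algebraic algebras are algebraic (Kaplansky), I would cite it. Alternatively, I can avoid the issue entirely: if $I$ is infinite, split $T=A\times B$ with $A=\prod_{n_i=1}D_i$ and $B=\prod_{n_i>1}\mathbb{M}_{n_i}(D_i)$. Then $B$ has only finitely many factors by Proposition \ref{t8}$(5)$, since distinct factors give distinct maximal ideals in $\Max'(T)$. For $A$, each $D_i$ is a quotient with finitely many maximal subrings and is algebraic over its center, so it is a field by Proposition \ref{t1}. Thus $A$ is a product of fields, which is finite in number by \cite[Theorem 3.9]{azomn}. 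This gives $(1)$ without any integrality.

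With $I$ finite, each factor is a homomorphic image of $T$ and so has only finitely many maximal subrings. For $n_i>1$, Theorem \ref{t4} forces $\mathbb{M}_{n_i}(D_i)$ to be finite, so $D_i$ is a finite field by Wedderburn; this is $(2)$. For $n_i=1$, Proposition \ref{t1} gives that $D_i$ is a field (a noncentral element algebraic over the center would yield infinitely many maximal subrings), with only finitely many maximal subrings; this is $(3)$. For $(4)$, if $D_i\cong D_j$ were infinite for $i\neq j$, then $D_i\times D_j$ is a quotient of $T$, contradicting Proposition \ref{t5}.

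For the converse, conditions $(1)$–$(4)$ say exactly that every $D_i$ is a field and that conditions $(1)$–$(4)$ of Proposition \ref{t27} hold for $T=\prod\mathbb{M}_{n_i}(E_i)$ with $E_i=D_i$. Proposition \ref{t27} then gives that $T$ has only finitely many maximal subrings.
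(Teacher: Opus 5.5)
Your proof is correct and follows the paper's approach: Theorem~\ref{t4} and Proposition~\ref{t1} force each $D_i$ to be a field (each factor being a quotient of $T$), and Proposition~\ref{t27} gives the converse. The paper is shorter in the forward direction, because once every $D_i$ is a field (which needs no finiteness of $I$) Proposition~\ref{t27} yields $(1)$--$(4)$ at once, so your separate arguments for $(1)$ via Proposition~\ref{t8}$(5)$ and for $(4)$ via Proposition~\ref{t5} re-derive what is inside its proof; you were right to drop the integrality detour, since whether matrix rings over algebraic algebras are algebraic is a Kurosh-type question, not a standard fact you can cite.
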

\begin{proof}
First assume that $T$ has only finitely many maximal subrings. Thus for each $i\in I$, we conclude that $\mathbb{M}_{n_i}(D_i)$ has only finitely many maximal subrings. Now if $n_i>1$, then by Theorem \ref{t4} we deduce that $D_i$ is a finite field. Hence assume that $n_i=1$, we claim that $D_i$ is a field. Otherwise by Proposition \ref{t1}, $D_i$ has infinitely many maximal subrings, which is impossible. Thus for each $i\in I$, we conclude that $D_i$ is a field and hence we are done by Proposition \ref{t27}.
\end{proof}

In \cite[Corollary 3.10]{azomn}, the authors proved that for a commutative ring $R$, the ring $R\times R$ has only finitely many maximal subrings if and only if $R$ is a semilocal ring (i.e., has only finitely many maximal ideals) with only finitely many maximal subrings (in particular, $R$ is integral over $\mathbb{Z}_n$ for some $n>1$) and for ever maximal ideal $M$ of $R$, the residue field $R/M$ is finite. For noncommutative rings we obtain the following generalization.

\begin{prop}\label{t30}
Let $R$ be a ring and $T=R\times R$. If $T$ has only finitely many maximal subrings, then the following hold:
\begin{enumerate}
\item $\Max(R)$, $\Maxl(R)$ and $\Maxr(R)$ are finite sets. In particular, $\Max(T)$, $\Maxl(T)$ and $\Maxr(T)$ are finite sets.
\item For each $M\in \Max(R)$, $R/M$ is finite. In particular, for each maximal ideal $Q$ of $T$, the ring $T/Q$ is finite.
\item $R/J(R)$ is a finite semisimple ring. In particular, $T/J(T)$ is a finite semisimple ring.
\item For each maximal one-sided ideal $M$ of $R$, $R/M$ is finite (i.e., each simple left/right $R$-module is finite). In particular, for each maximal one-sided ideal $N$ of $T$, $T/N$ is finite (i.e., each simple left/right $T$-module is finite).
\end{enumerate}
\end{prop}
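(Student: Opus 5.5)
The key tool is the diagonal-type construction in $T=R\times R$. For any proper ideal $I$ of $R$, $T/(I\times I)\cong R/I\times R/I$, so every maximal subring of $R/I\times R/I$ lifts to a maximal subring of $T$ containing $I\times I$. Also, every homomorphic image of $R$ is a homomorphic image of $T$, so $R$ itself has only finitely many maximal subrings. Moreover, for distinct maximal ideals $M\neq N$ of $R$, the ring $T/(M\times N)\cong R/M\times R/N$. I would prove the four parts in the order (2), (1), (3), (4).

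\emph{Step (2).} Let $M\in\Max(R)$. Then $R/M$ is simple and $T/(M\times M)\cong R/M\times R/M$ has only finitely many maximal subrings. By Proposition \ref{t5}, $R/M$ is finite. Every maximal ideal $Q$ of $T$ has the form $M\times R$ or $R\times M$, so $T/Q\cong R/M$ is finite.

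\emph{Step (1).} We show $\Max(R)$ is finite. Suppose $M_1,M_2,\ldots$ are distinct maximal ideals of $R$.
\begin{itemize}
\item If infinitely many $R/M_i$ are not fields, then each such $R/M_i$ has a maximal subring by Theorem \ref{pt6}$(3)$. Pulling these back gives maximal subrings of $R$ containing the pairwise comaximal ideals $M_i$, and these are distinct. Then the maximal subrings $S_i\times R$ of $T$ are distinct, which is a contradiction.
\item If infinitely many $R/M_i$ are fields larger than their prime subfield, the same argument works.
\item Otherwise, infinitely many $R/M_i\cong\mathbb{Z}_{p_i}$. For each $i$, the ring $T/(M_i\times M_i)\cong\mathbb{Z}_{p_i}\times\mathbb{Z}_{p_i}$ has its diagonal as a maximal subring. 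This gives a maximal subring $\Delta(M_i)$ of $T$ containing $M_i\times M_i$ (see the comments before Theorem \ref{pt6}). Since the ideals $M_i\times M_i$ are pairwise comaximal, these subrings are distinct, which is a contradiction.
\end{itemize}
Hence $\Max(R)$ is finite. For $\Maxl(R)$: since $\Maxl'(R)$ is finite by Proposition \ref{t8}$(1)$ and $\Maxl(R)\cap\Max(R)\subseteq\Max(R)$ is finite, $\Maxl(R)$ is finite; the right-sided case is symmetric. For $T$, the maximal one-sided ideals have the form $A\times R$ or $R\times A$, so they are also finitely many.

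\emph{Step (3).} The ideal $J(R)$ is the intersection of the finitely many maximal left ideals $M_1,\dots,M_k$ of $R$. Hence $R/J(R)$ embeds as a left module into $\bigoplus R/M_j$, so it is left Artinian, hence semisimple. Thus $R/J(R)\cong\prod_{M\in\Max(R)}R/M$ is a finite product of finite rings by (2), so it is finite. Then $T/J(T)\cong R/J(R)\times R/J(R)$ is finite as well.

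\emph{Step (4).} For a maximal left ideal $M$ of $R$, we have $J(R)\subseteq M$, so $R/M$ is a simple module over the finite ring $R/J(R)$ and is therefore finite. The same holds on the right, and the statement for $T$ follows since its maximal one-sided ideals are $A\times R$ or $R\times A$.

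The main obstacle is the finiteness of $\Max(R)$ in Step (1), in particular the case where almost all residue rings are prime fields. There the diagonal subrings $\Delta(M_i)$ have to be used, and their distinctness comes from the comaximality of the ideals $M_i\times M_i$.
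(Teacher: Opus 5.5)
Your proof is correct and follows the paper's argument essentially step for step, in the same order: Proposition \ref{t5} gives finiteness of $R/M$; distinct maximal subrings of $T$ containing the pairwise comaximal ideals $M\times M$ give finiteness of $\Max(R)$; Proposition \ref{t8}$(1)$ handles the one-sided ideals; and semisimplicity of $R/J(R)$ gives (3) and (4). The only difference is that your three-case split in Step (1) is unnecessary. Since $R/M$ is simple for every $M\in\Max(R)$, the subring $\Delta(M)=\Delta+M\times M$ is already a maximal subring of $T$ in all cases, and this uniform argument is exactly what the paper uses. Note also that your first case tacitly needs Wedderburn's theorem: you must know that a finite non-field $R/M_i$ is not a division ring before Theorem \ref{pt6}$(3)$ applies.
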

\begin{proof}
First note that for each maximal ideal $M$ of $R$, $T/(M\times M)\cong R/M\times R/M$ has only finitely many maximal subrings. Thus by Proposition \ref{t5}, $R/M$ is finite. Hence the first part of $(2)$ holds. Also note that for each $M\in \Max(R)$, $T$ has a maximal subring $R_M$ which contains $M\times M$ (by the comments preceding Theorem \ref{pt6}) and clearly whenever $M\neq N\in \Max(R)$, then $R_M\neq R_N$, because $(M\times M)+(N\times N)=T$. Hence we conclude that $\Max(R)$ is finite. Since $T$ has only finitely many maximal subrings, we infer that $R$ has only finitely many maximal subring. This implies that $\Maxl(R)\setminus \Max(R)$ is finite, by $(1)$ of Proposition \ref{t8}. Consequently $\Maxl(R)$ is finite, because $\Max(R)$ is finite by the first part of the proof. Analogously $\Maxr(R)$ is finite. For the second part of $(1)$, note that maximal (left/right) ideals of $T$ are of the forms $M\times R$ or $R\times M$, where $M$ is a maximal (left/right) ideal of $R$. Hence the conclusions are immediate by the first part of $(1)$. By the latter comment and the first part of $(2)$, we deduce the second part of $(2)$. For $(3)$, note that since $R$ has only finitely many maximal left ideals, we conclude that $R/J(R)$ is a semisimple ring. Hence by $(2)$, we have $R/J(R)$ is finite. The second part of $(3)$ is evident, because $J(T)=J(R)\times J(R)$. $(4)$ is an immediate consequences of $(3)$, because $R/J(R)$ and $T/J(T)$ are finite rings.
\end{proof}

From the previous result we immediately conclude that if $R$ is a ring for which $T=R\times R$ has only finitely many maximal subrings, then there exist finitely many pairwise non-isomorphic finite simple left $R$-modules $S_1,\ldots,S_n$ such that every simple left $R$-module is isomorphic to one of the $S_i$.

\begin{prop}\label{t31}
Let $R$ be a ring and $T=\mathbb{M}_n(R)$ where $n>1$. If $T$ has only finitely many maximal subring, then  the following hold:
\begin{enumerate}
\item $R$ has only finitely many maximal subrings.
\item For each maximal ideal $M$ of $R$, the ring $R/M$ is a finite ring. In particular, for each maximal ideal $Q$ of $T$, the ring $T/Q$ is finite.
\item $\Max(R)$ and therefore $\Max(T)$ are finite. Moreover, $\Maxl(R)$ and $\Maxr(R)$ are finite.
\item $R/J(R)$ and therefore $T/J(T)$ are finite semisimple rings. In particular, $\Maxl(T)$ and $\Maxr(T)$ are finite.
\item For each maximal one-sided ideal $M$ of $R$, $R/M$ is finite (i.e., each simple left/right $R$-module is finite). In particular, for each maximal one-sided ideal $N$ of $T$, $T/N$ is finite (i.e., each simple left/right $T$-module is finite).
\end{enumerate}
\end{prop}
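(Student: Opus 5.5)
Throughout, assume $T=\mathbb{M}_n(R)$ with $n>1$ has only finitely many maximal subrings. The plan is to transport everything through the standard correspondences between $R$ and $\mathbb{M}_n(R)$. For $(1)$, I would embed $\RgMax(R)$ into $\RgMax(T)$ via $A\mapsto \mathbb{M}_n(A)$. If $A$ is a maximal subring of $R$, then $\mathbb{M}_n(A)$ is a proper subring of $T$. The point to check is that it is maximal. The most economical route is to show that every subring $S$ of $T$ containing the matrix units $e_{ij}$ has the form $\mathbb{M}_n(B)$, where $B=e_{11}Se_{11}$ is identified with a subring of $R$. Indeed, $S\supseteq \mathbb{M}_n(A)\ni e_{ij}$, and $x\in S$ forces $e_{1i}xe_{j1}\in S$, so $S=\mathbb{M}_n(B)$ with $A\subseteq B$. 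Maximality of $A$ then gives $B=A$ or $B=R$. Since the map $A\mapsto \mathbb{M}_n(A)$ is injective, $|\RgMax(R)|\leq|\RgMax(T)|<\infty$.

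For $(2)$ and $(3)$, I would use that maximal ideals of $T$ are exactly the $\mathbb{M}_n(M)$ with $M\in\Max(R)$, and that $T/\mathbb{M}_n(M)\cong \mathbb{M}_n(R/M)$. This quotient is a simple ring that is not a division ring (since $n>1$), and it has only finitely many maximal subrings. Theorem \ref{t4} therefore forces it to be finite, and hence $R/M$ is finite. By Theorem \ref{pt6}$(4)$ (or $(3)$), each $\mathbb{M}_n(R/M)$ has a maximal subring, which pulls back to a maximal subring $S_M$ of $T$ containing $\mathbb{M}_n(M)$. For distinct $M,N$ we have $\mathbb{M}_n(M)+\mathbb{M}_n(N)=T$, so $S_M\neq S_N$. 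Hence $\Max(R)$ is finite, and so is $\Max(T)$. Alternatively, note that every maximal ideal of $T$ lies in $\Max'(T)$ and apply Proposition \ref{t8}$(5)$. Finally, by $(1)$ and Proposition \ref{t8}$(1)$, the sets $\Maxl'(R)$ and $\Maxr'(R)$ are finite. Together with the finiteness of $\Max(R)$, this shows that $\Maxl(R)$ and $\Maxr(R)$ are finite.

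For $(4)$, I would argue as follows. Since $\Maxl(R)$ is finite and $J(R)$ is the intersection of the maximal left ideals, $R/J(R)$ embeds as a left module into a finite direct sum of simple modules. Hence $R/J(R)$ is semisimple, so $R/J(R)\cong\prod_k \mathbb{M}_{m_k}(D_k)$ with each factor of the form $R/M$ for some $M\in\Max(R)$. By $(2)$ every factor is finite, so $R/J(R)$ is finite. Using $J(T)=\mathbb{M}_n(J(R))$ we get $T/J(T)\cong \mathbb{M}_n(R/J(R))$, which is finite. Since every maximal one-sided ideal of $T$ contains $J(T)$, $\Maxl(T)$ and $\Maxr(T)$ are finite. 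Part $(5)$ then follows at once: each simple one-sided module over $R$ (respectively $T$) is a module over the finite ring $R/J(R)$ (respectively $T/J(T)$) and is cyclic, hence finite.

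The main obstacle I expect is the maximality claim in $(1)$, namely the description of subrings of $\mathbb{M}_n(R)$ that contain all matrix units. The remaining steps are direct appeals to Theorem \ref{t4}, Proposition \ref{t8} and standard Morita-type facts.
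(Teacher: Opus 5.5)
Your proposal is correct and follows the paper's proof essentially step for step. The steps are Theorem \ref{t4} applied to $\mathbb{M}_n(R/M)$, distinct maximal subrings over the comaximal ideals $\mathbb{M}_n(M)$ via Theorem \ref{pt6}$(4)$, Proposition \ref{t8}$(1)$ for the one-sided maximal ideals of $R$, and $J(T)=\mathbb{M}_n(J(R))$ for $(4)$ and $(5)$; the only difference is in $(1)$, where you prove directly, by describing the subrings that contain all matrix units, that $\mathbb{M}_n(A)$ is maximal in $T$, whereas the paper simply quotes this from \cite{azq}.
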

\begin{proof}
$(1)$It is obvious that for each maximal subring $S$ of $R$, $\mathbb{M}_n(S)$ is a maximal subring of $T$, see \cite[Theorem 3.1$(2)$]{azq}. Thus $R$ has only finitely many maximal subrings.\\ $(2)$ Note that for each maximal ideal $M$ of $R$, $A_M=\mathbb{M}_n(M)$ is a maximal ideal of $T$ and $T/A_M\cong \mathbb{M}_n(R/M)$. By the assumption that $T$ has only finitely many maximal subrings, we deduce that $T/A_M$ has only finitely many maximal subrings too. Hence by Theorem \ref{t4}, we deduce that $R/M$ is finite (note, $n>1$ and therefore $T/A_M$ is not a division ring). The second part of $(2)$ is obvious by the first part of $(2)$, because each maximal ideal of $T$ is of the form $A_M$, for some maximal ideal $M$ of $R$.\\
$(3)$ With the notation of $(2)$, since $n>1$, by Theorem \ref{pt6}$(4)$, we conclude that $\mathbb{M}_n(R/M)$ has a maximal subring. Hence, $T$ has a maximal subring which contains $A_M$, say $T_M$. In particular, if $M\neq N\in \Max(R)$, then $T_M\neq T_N$, because $A_M+A_N=T$. Thus $\Max(R)$ is finite, a fortiori $\Max(T)$ is finite. For the second part of $(3)$, note that by $(1)$, $R$ has only finitely many maximal subrings, hence Proposition \ref{t8}$(1)$, implies that $\Maxl(R)\setminus \Max(R)$ and $\Maxr(R)\setminus \Max(R)$ are finite. Therefore $\Maxl(R)$ and $\Maxr(R)$ are finite, because $\Max(R)$ is finite by the first part of $(3)$.\\
$(4)$ Since $\Maxl(R)$ is finite, we infer that $R/J(R)$ is a semisimple ring. Nothing that for each maximal ideal $M$ of $R$, $R/M$ is a finite ring, consequently $R/J(R)$ is a finite ring. Now from the fact that $J(T)=\mathbb{M}_n(J(R))$, we deduce that $T/J(T)$ is finite semisimple too. Therefore $\Maxl(T)$ and $\Maxr(T)$ are finite.\\
$(5)$ It is obvious by $(4)$.
\end{proof}

We now state several corollaries.

\begin{cor}\label{t32}
Let $R$ be an infinite ring with $J(R)$ is nilpotent (in particular, if $R$ is an infinite left/right Artinian ring). Then $\mathbb{M}_n(R)$, where $n>1$, and $R\times R$ have infinitely many maximal subrings.
\end{cor}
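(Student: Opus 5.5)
We argue by contradiction, using Propositions \ref{t30} and \ref{t31} together with Lemma \ref{pt7} (in the form of the remark following it, which only needs the powers of the nilpotent ideal to be handled suitably). Suppose that either $T=\mathbb{M}_n(R)$ with $n>1$, or $T=R\times R$, has only finitely many maximal subrings. By Proposition \ref{t31}$(4)$ in the first case, and Proposition \ref{t30}$(3)$ in the second, $R/J(R)$ is a finite semisimple ring. The aim is to lift this finiteness from $R/J(R)$ to $R$ using the nilpotency of $J(R)$, which contradicts the assumption that $R$ is infinite.

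The main obstacle is this lifting step. Lemma \ref{pt7} is stated for left Noetherian rings, where each $J^i/J^{i+1}$ is a finitely generated module. Here only nilpotency of $J=J(R)$ is assumed, so we need another reason why each layer $J^i/J^{i+1}$ is finite. That reason comes from part $(4)$ of Propositions \ref{t30} and \ref{t31}: there are only finitely many simple left $R$-modules up to isomorphism, each finite, and $R/J$ is a finite semisimple ring. Each $J^i/J^{i+1}$ is a left $R/J$-module and hence semisimple, that is, a direct sum of copies of finitely many finite simple modules. This alone does not bound the number of summands, so we need a finite generation argument.

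In the left Artinian case (the ``in particular'' of the statement), $R$ is left Noetherian by Hopkins--Levitzki, and Lemma \ref{pt7} with $\mathfrak a=\aleph_0$ gives that $R$ is finite directly. For the general nilpotent case, the plan is to show that each $J^i/J^{i+1}$ is finitely generated. To do this we use the maximal subrings again. If some layer $J^i/J^{i+1}$ contained infinitely many independent simple summands, we would pass to the factor ring $\bar R=R/J^{i+1}$. There we would aim to produce infinitely many distinct maximal subrings of $\mathbb{M}_n(\bar R)$ or $\bar R\times\bar R$, for instance subrings containing $\mathbb{M}_n(J^i)$ together with a maximal subring chosen modulo a maximal submodule of the layer. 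This is the step I expect to be delicate. If it does not go through, the fallback is to prove that $R/J^2$ is finite by such an argument and then induct on the nilpotency index. In either case, once every layer is finite, descending induction as in the proof of Lemma \ref{pt7} shows that $R$ is finite, which gives the contradiction.
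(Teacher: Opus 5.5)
You are right not to trust Lemma~\ref{pt7} here, and the problem is in the paper's own proof as well. That proof is exactly your first paragraph: Propositions~\ref{t30}/\ref{t31} give that $R/J(R)$ is finite, and then Lemma~\ref{pt7} is quoted. But that lemma needs the powers of $J(R)$ to be finitely generated, and nilpotency alone is not enough. For example, the trivial extension $R=\mathbb{F}_2\oplus V$, with $V$ an infinite-dimensional $\mathbb{F}_2$-space and $V^2=0$, has $J(R)^2=0$ and $R/J(R)\cong\mathbb{F}_2$, yet $R$ is infinite. So your argument settles the Artinian case, where it coincides with the paper's. In the general case the maximal-subring hypothesis has to be used a second time.

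That second use is exactly what your proposal does not supply. You only announce the step ``produce infinitely many maximal subrings of $\mathbb{M}_n(\bar R)$ or $\bar R\times\bar R$ from an infinite layer''. The recipe you offer (subrings containing $\mathbb{M}_n(J^i)$ together with a maximal subring ``modulo a maximal submodule of the layer'') is not a construction. Two ideas are missing:
\begin{itemize}
\item Subrings correspond to two-sided sub-bimodules of a square-zero layer, not to left submodules.
\item You need a subring that supplements the layer but meets it only in a \emph{finite} set. An arbitrary supplement may contain almost all of the layer, and then its overrings tell you nothing.
\end{itemize}

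Here is one way to close the gap. No matrices or products are needed at this point.
\begin{enumerate}
\item By Proposition~\ref{t31}$(1)$ (resp.\ via $S\mapsto S\times R$), $R$ itself has only finitely many maximal subrings. Hence so does $\bar R:=R/J^2$, where $J=J(R)$.
\item Put $\bar J=J/J^2$ and $B=R/J$, which is finite. Since $m\cdot 1\in J$ for $m=\Char(B)$ and $J$ is nilpotent, $R$ has finite characteristic.
\item Choose representatives $x_b$ of $b\in B$ with $x_0=0$ and $x_1=1$. Let $U\subseteq\bar J$ be the finite set of all $x_bx_c-x_{bc}$ and $x_b+x_c-x_{b+c}$. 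Let $N$ be the additive group generated by all $x_bux_c$ with $u\in U$; it is finite.
\item Using $\bar J^2=0$ (for instance $x_b\,x_cux_d=x_{bc}ux_d$), one checks that $A:=\{x_b\mid b\in B\}+N$ is a finite subring with $A+\bar J=\bar R$.
\item $\bar J$ is a $B$-bimodule. Let $W$ be a maximal sub-bimodule of $\bar J$ containing $A\cap\bar J$. Then $A+W$ is a subring, since $W^2=0$ and $AW+WA\subseteq W$. By the modular law it is a maximal subring with $(A+W)\cap\bar J=W$, so distinct $W$ give distinct maximal subrings.
\item Suppose $\bar J$ were infinite. Then $\bar J/(A\cap\bar J)$ is an infinite module over the finite ring $B\otimes_{\mathbb{Z}}B^{\text{op}}$. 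Such a module has infinitely many maximal submodules: otherwise its top is finite, Nakayama's lemma for the nilpotent radical makes it finitely generated, and it would be finite. This would give $\bar R$ infinitely many maximal subrings, so $J/J^2$ is finite.
\item $J^i/J^{i+1}$ is additively generated by the classes of products $a_1\cdots a_i$ with $a_k\in J$, and these classes depend only on the $a_k+J^2$. So it is generated by at most $|J/J^2|^i$ elements of finite order, and is finite.
\end{enumerate}
Therefore $R$ is finite. This replaces the appeal to Lemma~\ref{pt7} and proves the corollary under the bare nilpotency hypothesis.
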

\begin{proof}
Assume that $\mathbb{M}_n(R)$ or $R\times R$ has only finitely many maximal subrings. Then by Propositions \ref{t30} or \ref{t31}, we deduce that $R/J(R)$ is a finite ring. Therefore by Lemma \ref{pt7}, we conclude that $R$ is a finite ring, which is impossible. Hence we are done.
\end{proof}

\begin{cor}\label{t33}
Let $R$ be a zero-dimensional ring with $\Char(R)=0$. Then $\mathbb{M}_n(R)$, where $n>1$, and $R\times R$ have infinitely many maximal subrings.
\end{cor}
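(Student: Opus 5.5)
We argue by contradiction, following the pattern of Corollary \ref{t32}. Suppose that $\mathbb{M}_n(R)$ (for some $n>1$) or $R\times R$ has only finitely many maximal subrings. By Proposition \ref{t31}$(2)$, respectively Proposition \ref{t30}$(2)$, the ring $R/M$ is finite for every maximal ideal $M$ of $R$. Moreover, by Proposition \ref{t31}$(4)$, respectively Proposition \ref{t30}$(3)$, the ring $R/J(R)$ is a finite semisimple ring, so in particular $R$ has at least one maximal ideal $M$.

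The key step is to use that $R$ has characteristic zero to show that $R/M$ has characteristic zero for some maximal ideal $M$. Such an $R/M$ would then be infinite, since it contains a copy of $\mathbb{Z}$ (or of $\mathbb{Q}$), which contradicts finiteness. Here zero-dimensionality is what we use: every prime ideal of $R$ is maximal. Since $R/J(R)$ is finite, $m:=\Char(R/J(R))$ is a positive integer. Hence $m\cdot 1\in J(R)$, and therefore $m\cdot 1$ lies in every prime ideal, since each prime ideal is maximal and so contains $J(R)$. So $m\cdot 1$ lies in the prime radical of $R$, i.e.\ in the intersection of all prime ideals, which is the lower nilradical; in particular $m\cdot 1$ is nilpotent.

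The point $m\cdot 1$ is central also helps: the intersection of all primes consists of strongly nilpotent elements, and $m\cdot 1$ is central, so $(m\cdot 1)^k=m^k\cdot 1=0$ for some $k$. This says $\Char(R)$ divides $m^k$, which contradicts $\Char(R)=0$. Thus both $\mathbb{M}_n(R)$ and $R\times R$ have infinitely many maximal subrings.

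The main obstacle is the convention for ``zero-dimensional'' in the noncommutative setting. I take it to mean that every prime ideal is maximal, or at least that prime ideals containing $J(R)$ exist and are maximal. What the argument actually needs is that the intersection of all prime ideals contains $J(R)$, i.e.\ that $J(R)$ is nil modulo the prime radical. If instead the intended meaning is that the center $C(R)$ is zero-dimensional, I would apply the same argument inside $C(R)$: $m\cdot 1\in C(R)$ lies in $J(R)\cap C(R)$, which is contained in every maximal ideal of $C(R)$ (each maximal ideal of $R$ contracts to a prime, hence maximal, ideal of $C(R)$, though checking that every maximal ideal of $C(R)$ arises this way may need lying-over). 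Nilpotence of $m\cdot 1$ then follows from the nilradical of a zero-dimensional commutative ring being its Jacobson radical.
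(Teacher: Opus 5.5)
Your proof is correct, but it runs the argument in the opposite direction from the paper's.

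The paper uses only the weaker output of Propositions \ref{t30}$(2)$ and \ref{t31}$(2)$, namely that each $R/M$ with $M\in\Max(R)$ is finite. It then exhibits a prime ideal $P$ with $P\cap\mathbb{Z}1=0$: take an ideal maximal among those missing the central multiplicative set $\{k\cdot 1\mid k\neq 0\}$, which avoids $0$ exactly because $\Char(R)=0$. Zero-dimensionality makes $P$ maximal, so $R/P$ would be a finite ring containing a copy of $\mathbb{Z}$.

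You instead use the stronger output that $R/J(R)$ is finite, which gives a single integer $m>0$ with $m\cdot 1\in J(R)$. Zero-dimensionality then forces $J(R)$ to equal the prime radical, since every prime is maximal, hence primitive, hence contains $J(R)$. The prime radical is nil, so $m^k\cdot 1=0$.

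The two arguments are essentially contrapositives of each other, since nilness of the prime radical rests on the same ``prime ideal avoiding a multiplicative system'' construction. The paper's version does not need $\Max(R)$ to be finite, while yours avoids constructing the prime by hand.

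Your closing hedge is unnecessary: the paper's proof takes zero-dimensional to mean every prime ideal is maximal, which is your main reading. Even under the center reading no lying-over is needed, because $J(R)\cap C(R)\subseteq J(C(R))$ (the inverse of a central unit is central).
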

\begin{proof}
Assume that $\mathbb{M}_n(R)$ or $R\times R$ has only finitely many maximal subrings. Then by Propositions \ref{t30} or \ref{t31}, we deduce that for each maximal ideal $M$ of $R$, $R/M$ is a finite ring and therefore $\Char(R/M)\neq 0$. Now note that there exists a prime ideal $P$ of $R$ such that $P\cap\mathbb{Z}=0$. Since $R$ is a zero-dimensional ring we deduce that $P$ is a maximal ideal of $R$ and therefore $R/P$ is finite. But from $\mathbb{Z}\cap P=0$, we deduce that $R/P$ contains a copy of $\mathbb{Z}$, which is absurd.
\end{proof}

\begin{cor}\label{t34}
Let $R$ be a $K$-algebra over an infinite field $K$. Then $\mathbb{M}_n(R)$, where $n>1$, and $R\times R$ has infinitely many maximal subrings.
\end{cor}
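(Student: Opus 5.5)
Argue by contradiction, feeding the consequences of Propositions \ref{t30} and \ref{t31} into the fact that $K$ is infinite. So suppose that $\mathbb{M}_n(R)$ with $n>1$, or $R\times R$, has only finitely many maximal subrings.

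By Proposition \ref{t31}$(2)$ (for the matrix ring) or Proposition \ref{t30}$(2)$ (for $R\times R$), $R/M$ is finite for every maximal ideal $M$ of $R$. The remaining work is to produce one maximal ideal $M$ of $R$ and show that $R/M$ must be infinite.

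Existence of $M$ is immediate: $R\neq 0$ has $1\neq 0$, so $R$ has a maximal ideal $M$ by Zorn's Lemma. Since $K\subseteq C(R)$ is a field, $K\cap M$ is an ideal of $K$ not containing $1$, hence $K\cap M=0$. Therefore $K$ embeds in $R/M$ through $k\mapsto k+M$. Because $K$ is infinite, $R/M$ is infinite, which contradicts the finiteness obtained above. This proves both claims at once.

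An alternative route uses Proposition \ref{t30}$(3)$ or Proposition \ref{t31}$(4)$: these say $R/J(R)$ is finite, while $K\cap J(R)=0$ because nonzero elements of $K$ are units. Then $K$ embeds in $R/J(R)$, again a contradiction.

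There is no real obstacle. The only point to check is that the elements of $K$ are genuinely central units of $R$, so that their images in any proper quotient are nonzero. This holds by the definition of a $K$-algebra adopted in the paper, where $K\subseteq C(R)$.
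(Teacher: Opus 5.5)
Your proof is correct and is essentially the paper's argument. The paper invokes Propositions \ref{t30} and \ref{t31} to get $R/J(R)$ finite and then notes that $R/J(R)$ contains a copy of the infinite field $K$, which is exactly your ``alternative route''; your main route applies the same idea to part $(2)$ of those propositions, using a single maximal ideal $M$ with $K\cap M=0$, which is a slightly weaker and more directly established consequence.
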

\begin{proof}
Assume that $\mathbb{M}_n(R)$ or $R\times R$ has only finitely many maximal subrings. Then by Propositions \ref{t30} or \ref{t31}, we deduce that $R/J(R)$ is a finite ring. But clearly $R/J(R)$ contains a copy of $K$, which is impossible for $K$ is infinite. Hence we are done.
\end{proof}

\begin{prop}\label{t35}
Let $T$ be a semilocal ring (in particular, if $T$ is a left/right Artinian ring) with only finitely many maximal subrings. Then $T/J(T)\cong D_1\times\cdots\times D_n\times \mathbb{M}_{n_1}(E_1)\times\cdots\times \mathbb{M}_{n_k}(E_k)$, where each $D_i$ is a division ring, $1\leq i\leq n$, $E_j$ is a finite field, $1\leq j\leq k$ and $n_j>1$, for some $n,k\geq 0$. Moreover, whenever $D_i$ is infinite, then $D_i\ncong D_j$ as rings, for each $j\neq i$. 
\end{prop}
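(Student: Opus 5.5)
Since $T$ is semilocal, $T/J(T)$ is a semisimple Artinian ring. By Wedderburn--Artin we may write $T/J(T)\cong \prod_{i=1}^m \mathbb{M}_{m_i}(D_i')$ with each $D_i'$ a division ring and $m\geq 1$. The factors are $T/M$, where $M$ runs over the finitely many maximal ideals of $T$. Indeed, $J(T)$ is the intersection of the maximal ideals containing it, and these correspond to the simple components. Every maximal ideal of $T$ contains $J(T)$, since $T/M$ is simple Artinian and hence $J$-semisimple.

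We separate the factors according to the matrix size. The factors with $m_i=1$ give the division rings $D_1,\dots,D_n$. For a factor with $m_i=n_j>1$, write $Q$ for the corresponding maximal ideal. Then $Q$ is not a maximal left ideal, so $Q\in\Max'(T)$. Proposition \ref{t8}$(4)$ now gives $T/Q\cong\mathbb{M}_{n_j}(E_j)$ with $E_j$ a finite field. Alternatively, $T/Q$ has only finitely many maximal subrings and is simple but not a division ring, so Theorem \ref{t4} applies. Either way we obtain the stated decomposition.

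For the \emph{moreover} part, suppose $D_i\cong D_j$ as rings with $i\neq j$ and $D_i$ infinite. Let $M_i,M_j$ be the corresponding distinct maximal ideals. These are comaximal, so the Chinese remainder theorem gives
\[
T/(M_i\cap M_j)\cong T/M_i\times T/M_j\cong D_i\times D_i .
\]
By Proposition \ref{t5}, or Corollary \ref{t6}, the ring $D_i\times D_i$ has infinitely many maximal subrings, because $D_i$ is a simple ring that is infinite. Maximal subrings of a factor ring pull back to distinct maximal subrings of $T$, so $T$ would have infinitely many maximal subrings, a contradiction.

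The main point needing care is that passing to the quotients $T/Q$ and $T/(M_i\cap M_j)$ preserves the hypothesis of finitely many maximal subrings. This holds because maximal subrings of a quotient correspond to maximal subrings of $T$ containing the kernel. Everything else is direct citation of earlier results.
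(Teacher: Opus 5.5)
Your proof is correct and follows essentially the same route as the paper. You apply Wedderburn--Artin to $T/J(T)$ and bound the matrix factors by citing Theorem~\ref{t4} or Proposition~\ref{t8}(4), which package the paper's direct appeal to Corollary~\ref{pt4} together with finiteness of $\Maxl(T)$; for the \emph{moreover} part you apply Proposition~\ref{t5} to $T/(M_i\cap M_j)\cong D_i\times D_i$, exactly as the paper does. One small slip: $J(T)\subseteq M$ for every maximal ideal $M$ holds because $T/M$ is simple and hence $J$-semisimple, not because it is Artinian, which you do not yet know at that point.
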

\begin{proof}
Since $T/J(T)$ is a semisimple ring, we deduce that $T/J(T)\cong \prod_{i\in I}\mathbb{M}_{n_i}(D_i)$, where $I$ is finite, $D_i$ is a division ring and $n_i\in\mathbb{N}$. Clearly $T$ has only finitely many maximal ideals. Also note that since $T$ has only finitely many maximal subrings, then by Proposition \ref{t8}$(1)$, we conclude that $\Maxl(T)\setminus \Max(T)$ is finite. Therefore $\Maxl(T)$ is finite. Similarly $\Maxr(R)$ is finite. Obviously for each $i\in I$, the ring $\mathbb{M}_{n_i}(D_i)$ has only finitely many maximal subrings, thus by Corollary \ref{pt4}, we deduce that whenever $i\in I$ and $n_i>1$, then $D_i$ is finite, i.e., $D_i$ is a finite field. Consequently $T/J(T)\cong D_1\times\cdots\times D_n\times \mathbb{M}_{n_1}(E_1)\times\cdots\times \mathbb{M}_{n_k}(E_k)$, where each $D_i$ is a division ring, $1\leq i\leq n$ and $E_j$ is a finite field, $1\leq j\leq k$ and $n_j>1$, for some $n,k\geq 0$. Finally if for some $1\leq i\neq j\leq n$, $D_i\cong D_j$ as rings, then $T$ has an ideal $I$ such that $T/I\cong D_i\times D_j\cong D_i\times D_i$. Since $T$ has only finitely many maximal subrings, we deduce that $T/I$ has only finitely many maximal subrings. Thus by Proposition \ref{t5}, $D_i$ is finite, which is impossible. Hence we are done.
\end{proof}

We conclude this paper by the following immediate corollary.

\begin{cor}\label{t36}
Let $T$ be a semilocal ring. If $T$ has only finitely many maximal subrings, then $T$ has only finitely many maximal left/right ideals.
\end{cor}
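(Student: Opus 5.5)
Since $T$ is semilocal, the quotient $T/J(T)$ is a semisimple (Artinian) ring. Every maximal left ideal of $T$ contains $J(T)$, so passing to the quotient sets up a bijection between $\Maxl(T)$ and $\Maxl(T/J(T))$, and likewise on the right. It therefore suffices to show that $\Maxl(T)$ and $\Maxr(T)$ are finite.

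The plan is to split $\Maxl(T)$ into two pieces, $\Maxl(T)=(\Maxl(T)\cap\Max(T))\cup(\Maxl(T)\setminus\Max(T))$, and bound each separately.

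\emph{The first piece.} By Wedderburn--Artin, $T/J(T)\cong\prod_{i=1}^m\mathbb{M}_{n_i}(D_i)$ with $m$ finite. A maximal ideal of $T$ contains $J(T)$, so it corresponds to one of the $m$ maximal ideals of this finite product. Hence $\Max(T)$ is finite, and so is $\Maxl(T)\cap\Max(T)$.

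\emph{The second piece.} By Proposition \ref{t8}$(1)$, the hypothesis that $T$ has only finitely many maximal subrings gives $|\Maxl(T)\setminus\Max(T)|\leq|\RgMax(T)|<\infty$.

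Combining the two pieces, $\Maxl(T)$ is finite. The same argument, using the right-hand part of Proposition \ref{t8}$(1)$, shows $\Maxr(T)$ is finite. This is essentially the argument already carried out inside the proof of Proposition \ref{t35}.

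There is no real obstacle here. The only point needing care is that ``semilocal'' is understood as $T/J(T)$ semisimple Artinian, which is what yields finiteness of $\Max(T)$. If instead semilocal is taken to mean only that $\Max(T)$ is finite, that finiteness is exactly what is needed, and the rest of the argument goes through unchanged.
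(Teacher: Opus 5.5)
Your argument is correct and is essentially the paper's own: the corollary is stated without separate proof because it follows from the argument inside Proposition \ref{t35}. That argument shows $\Max(T)$ is finite by semilocality and that $\Maxl(T)\setminus\Max(T)$ and $\Maxr(T)\setminus\Max(T)$ are finite by Proposition \ref{t8}$(1)$; your opening reduction via $T/J(T)$ only restates the goal and can be dropped.
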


\vspace{0.5cm}
\centerline{\Large{\bf Acknowledgement}}
The author is grateful to the Research Council of Shahid Chamran University of Ahvaz (Ahvaz-Iran) for
financial support (Grant Number: SCU.MM1404.721)
\vspace{0.5cm}


\end{document}